\numberwithin{equation}{section}
\newtheorem{theorem}{Theorem}[section]
\newtheorem{lemma}[theorem]{Lemma}
\newtheorem{hypo}[theorem]{Hypothesis}
\newtheorem{Th}{Theorem}
\newtheorem{Prop}[Th]{Proposition}
\newtheorem{corollary}[theorem]{Corollary}
\theoremstyle{definition}
\newtheorem{remark}[theorem]{Remark}
\newtheorem{ex}[theorem]{Example}
\newcommand{\F}{\mathcal{F}}
\newcommand{\E}{\mathcal{E}}
\newcommand{\G}{\mathcal{G}}
\newcommand{\X}{\mathcal{X}}
\renewcommand{\L}{\mathcal{L}}
\newcommand{\mD}{\mathcal{D}}
\newcommand{\tE}{\tilde{\mathcal{E}}}
\newcommand{\tS}{\tilde{S}}
\newcommand{\tH}{\tilde{\mathcal{H}}}
\newcommand{\M}{\mathcal{M}}
\newcommand{\N}{\mathcal{N}}
\newcommand{\K}{\mathcal{K}}
\newcommand{\D}{\mathbf{D}}
\renewcommand{\H}{\mathcal{H}}
\newcommand{\Hom}{\operatorname{Hom}}
\newcommand{\Aut}{\operatorname{Aut}}
\newcommand{\Syl}{\operatorname{Syl}}
\newcommand{\ov}{\overline}
\newcommand{\W}{\mathbf{W}}
\newcommand{\hyp}{\mathfrak{hyp}}
\newcommand{\Comp}{\operatorname{Comp}}
\newcommand{\hH}{\hat{\mathcal{H}}}
\newcommand{\fC}{\mathfrak{C}}
\newcommand{\Y}{\mathcal{Y}}
\def \<{\langle }
\def \>{\rangle }
\newcommand{\norm}{\trianglelefteq}
\newcommand{\subn}{{\,\norm\norm\,}}
\renewcommand{\unlhd}{\norm}
\newcommand{\la}{\<\!\<}
\newcommand{\ra}{\>\!\>}
\renewcommand{\phi}{\varphi}
\title{On Wielandt's Join Theorem for fusion systems and localities}
\author[E.~Henke]{Ellen Henke}
\begin{document}

\begin{abstract}
Saturated fusion systems are categories generalizing important aspects of conjugacy of $p$-subgroups in finite groups. It was shown by Chermak that there are group-like structures called regular localities associated to saturated fusion systems. Both the theory of fusion systems and the theory of regular localities are developed in analogy to the theory of finite groups. In this paper we focus on a classical theorem of Wielandt, which states that any two subnormal subgroups of a finite group $G$ generate a subnormal subgroup of $G$. We prove versions of this theorem for regular localities and for fusion systems. Along the way we prove also a purely group-theoretical result which may be of independent interest.
\end{abstract}

\thanks{\textit{Author affiliation.} Institut f{\"u}r Algebra, Fakult{\"a}t Mathematik, Technische Universit{\"a}t Dresden, 01062 Dresden, Germany\\
\textit{Email.} ellen.henke@tu-dresden.de\\
\textit{ORCID Id.} 0000-0002-9759-3708}

\keywords{Finite groups, subnormal subgroups, Sylow subgroups, fusion systems, localities} 
\subjclass[2020]{20D20, 20D35, 20N99}

\maketitle

\section{Introduction}

Helmut Wielandt proved in his habilitation thesis \cite{Wielandt:1939} that any two subnormal subgroups $H_1$ and $H_2$ of a finite group $G$ generate a subnormal subgroup of $G$, a theorem which is often referred to as Wielandt's Join Theorem. It is moreover known that $\<H_1,H_2\>\cap S=\<H_1\cap S,H_2\cap S\>$ for any Sylow $p$-subgroup $S$ of $G$; as far as we are aware, this is a Lemma due to Ulrich Meierfrankenfeld (see Lemma~\ref{L:Meierfrankenfeld}). In the present paper we prove versions of these results for regular localities and for fusion systems. The reader is referred to Part I of \cite{Aschbacher/Kessar/Oliver:2011} for an introduction to the theory of fusion systems.

\smallskip

Localities are group--like structures associated to saturated fusion systems which were introduced by Chermak \cite{Chermak:2013,Chermak:2015}. Roughly speaking, a locality is a triple $(\L,\Delta,S)$, where $\L$ is a \emph{partial group} (i.e. a set $\L$ together with an ``inversion'' and a ``partial multiplication'' which is only defined on certain words in $\L$), $S$ is a  ``Sylow $p$-subgroup'' of $\L$, and $\Delta$ is a set of subgroups of $S$ subject to certain axioms. Given a partial group $\L$,  there are natural notions of \emph{partial normal subgroups} of $\L$ and of \emph{partial subnormal subgroups} of $\L$. A nice theory of partial subnormal subgroups and of components can be developed for \emph{regular localities}, which are special kinds of localities introduced by Chermak \cite{ChermakIII} (see also \cite{Henke:Regular}). It turns out that partial subnormal subgroups of regular localities form regular localities. The existence and uniqueness of centric linking systems implies that there is an essentially unique regular locality associated to every saturated fusion system.

\smallskip

Wielandt's Join Theorem and Meierfrankenfeld's Lemma have the following very natural translation to regular localities. The proof uses Wielandt's Join Theorem and Meierfrankenfeld's Lemma for groups.

\begin{Th}\label{T:RegularLocalities}
Let $(\L,\Delta,S)$ be a regular locality. Fix moreover two partial subnormal subgroups $\H_1$ and $\H_2$ of $\L$. Then $\<\H_1,\H_2\>$ is a partial subnormal subgroup of $\L$ with
\[\<\H_1,\H_2\>\cap S=\<\H_1\cap S,\H_2\cap S\>.\]
\end{Th}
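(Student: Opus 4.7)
My plan is to imitate the strategy of Wielandt's proof while reducing, whenever possible, to the classical Wielandt theorem and Meierfrankenfeld's lemma in finite groups. I rely throughout on the existing theory of regular localities: every partial subnormal subgroup $\H$ of $\L$ is itself a regular locality with Sylow $p$-subgroup $\H\cap S$, and for two partial normal subgroups $\N_1,\N_2\nsg\L$ the set-theoretic product $\N_1\N_2$ is again partial normal in $\L$, coincides with the partial subgroup $\langle\N_1,\N_2\rangle$, and has Sylow $p$-subgroup $(\N_1\cap S)(\N_2\cap S)$. The last statement settles Theorem A when both $\H_1,\H_2$ are partial normal in $\L$ and serves as the base case of the induction.

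The induction runs on the pair $(d_1,d_2)$ of subnormal depths of $\H_1$ and $\H_2$ in $\L$, lexicographically, after arranging $d_1\le d_2$ by symmetry. For $d_2\ge 2$ I pick $\K$ partial subnormal in $\L$ of depth $d_2-1$ with $\H_2\nsg\K$. The intersection $\H_1\cap\K$ is partial subnormal in $\K$, a fact lifting from groups which I would verify separately in the partial-group setting. Applying the inductive hypothesis inside the regular locality $\K$ (with Sylow $\K\cap S$) to $\H_1\cap\K$ and $\H_2$ yields that $\M:=\langle\H_1\cap\K,\H_2\rangle$ is partial subnormal in $\K$ with $\M\cap S=\langle\H_1\cap\K\cap S,\H_2\cap S\rangle$. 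By transitivity of partial subnormality, $\M$ is partial subnormal in $\L$. A second application of the inductive hypothesis in $\L$, now to $\H_1$ and $\M$, then produces $\langle\H_1,\H_2\rangle=\langle\H_1,\M\rangle$ as a partial subnormal subgroup of $\L$ with the desired Sylow identity, since $\H_1\cap\K\subseteq\M$ forces $\H_1\cap\K\subseteq\langle\H_1,\M\rangle$.

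The main obstacle I expect is the Sylow intersection formula itself --- specifically the inclusion $\langle\H_1,\H_2\rangle\cap S\subseteq\langle\H_1\cap S,\H_2\cap S\rangle$ --- both in the base case and at each inductive step. Given $s\in\langle\H_1,\H_2\rangle\cap S$, the strategy is to transport the entire partial product witnessing this membership into a single finite group $N_\L(P)$, where $P\in\Delta$ is chosen large enough to contain $\langle s\rangle$ together with $\H_1\cap S$ and $\H_2\cap S$; inside $N_\L(P)$ the subgroups $N_\L(P)\cap\H_i$ will be subnormal, and the classical Meierfrankenfeld lemma applies verbatim. The delicate point is to show that every word in $\L$ representing such an $s$ can be honestly transported into $N_\L(P)$ without leaving the domain of the partial multiplication, and that this localisation is compatible with the inductive bookkeeping on depths. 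A secondary technical hurdle is verifying in the partial-group language the classical fact that intersections of partial subnormals with partial subnormals are partial subnormal in the latter, which is needed to pass from $\H_1$ to $\H_1\cap\K$ at the inductive step.
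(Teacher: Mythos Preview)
Your inductive scheme does not terminate. In the second application of the inductive hypothesis, you apply it in $\L$ to the pair $(\H_1,\M)$, but you have no control over the subnormal depth of $\M$ in $\L$: you only know $\M\subn\K$ and $\K\subn\L$, and the depth of $\M$ inside $\K$ can be arbitrarily large (it is built from $\H_1\cap\K$, whose depth in $\K$ you bound by $d_1$, and $\H_2$, but the depth of the generated subgroup is not bounded by the depths of the generators --- this is precisely the content of Wielandt's theorem, which you are trying to prove). So the pair $(\min(d_1,d_\M),\max(d_1,d_\M))$ need not be lexicographically smaller than $(d_1,d_2)$, and the recursion can cycle.

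The ``transport into $N_\L(P)$'' idea is also not viable as stated. A word in elements of $\H_1\cup\H_2$ witnessing $s\in\langle\H_1,\H_2\rangle\cap S$ involves elements $h$ whose only known relation to $\Delta$ is $S_h\in\Delta$; there is no common $P\in\Delta$ that all of them normalize, and even if there were, $N_\L(P)\cap\H_i$ is not in general subnormal in $N_\L(P)$. What \emph{is} true in a regular locality is that there is one distinguished group where this works: $N_\L(T^*)$ with $T^*:=S\cap F^*(\L)$. The paper's proof exploits exactly this. Each $\H_i$ factors via a Frattini argument as $\H_i=E(\H_i)\,H_i$ with $H_i:=N_{\H_i}(T^*)\subn N_\L(T^*)$; inside the finite group $N_\L(T^*)$ one applies the classical Wielandt theorem and Meierfrankenfeld's lemma to $H_1,H_2$, obtaining $H:=\langle H_1,H_2\rangle\subn N_\L(T^*)$ and $H\cap S=\langle S_1,S_2\rangle$. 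The component pieces $E(\H_1),E(\H_2)$ are handled by the structure theory of components (their product $\N$ is partial normal in $F^*(\L)$, and one checks $\fC=\Comp(\H_1)\cup\Comp(\H_2)$ is $H$-invariant and $H$ centralizes the complementary components). One then proves directly that $\N H$ is a partial subnormal subgroup of $\L$ equal to $\langle\H_1,\H_2\rangle$, with $\N H\cap S=H\cap S=\langle S_1,S_2\rangle$. There is no induction on subnormal depth at all; the reduction to groups happens once, inside $N_\L(T^*)$, and the rest is component bookkeeping specific to regular localities.
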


Theorem~\ref{T:RegularLocalities} is restated and proved in Theorem~\ref{T:c}. 

\smallskip

Finding a formulation of Wielandt's Join Theorem for fusion systems is slightly more tricky. As we show in Example~\ref{E:1}, the subsystem generated by two subnormal subsystems of a saturated fusion system $\F$ is in some cases not even saturated (and is thus in particular not a subnormal subsystem of $\F$). 
However, we are able to prove the following result.

\begin{Th}\label{T:FusionSystems}
Let $\F$ be a saturated fusion system, and suppose that $\E_1$ and $\E_2$ are subnormal subsystems of $\F$ over $S_1$ and $S_2$ respectively. Then there is a (with respect to inclusion) smallest saturated subsystem $\la\E_1,\E_2\ra$ of $\F$ in which $\E_1$ and $\E_2$ are subnormal. The subsystem $\<\!\<\E_1,\E_2\>\!\>$ is a subnormal subsystem of $\F$ over $\<S_1,S_2\>$.
\end{Th}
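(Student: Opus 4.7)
The plan is to reduce to Theorem~\ref{T:RegularLocalities} via the correspondence between saturated fusion systems and regular localities. Fix a regular locality $(\L,\Delta,S)$ realizing $\F$. By the established bijection between subnormal subsystems of $\F$ and partial subnormal subgroups of $\L$, the subsystems $\E_1$ and $\E_2$ correspond to partial subnormal subgroups $\H_1$ and $\H_2$ of $\L$ with $\H_i\cap S=S_i$. Theorem~\ref{T:RegularLocalities} then produces $\H:=\<\H_1,\H_2\>$ as a partial subnormal subgroup of $\L$ satisfying $\H\cap S=\<S_1,S_2\>$. Since partial subnormal subgroups of regular localities are themselves regular localities, the associated fusion system $\Gamma:=\F_{\<S_1,S_2\>}(\H)$ is a saturated subnormal subsystem of $\F$ on $\<S_1,S_2\>$; this is my candidate for $\la\E_1,\E_2\ra$.

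I would next verify that $\E_1$ and $\E_2$ are subnormal in $\Gamma$. Given a subnormal chain $\H_i = \K_0 \nsg \K_1 \nsg \cdots \nsg \K_k = \L$ of partial subgroups of $\L$, intersecting each term with $\H$ produces a chain $\H_i = \K_0 \cap \H \nsg \K_1 \cap \H \nsg \cdots \nsg \K_k \cap \H = \H$ of partial subgroups of $\H$; translating back via the fusion-system/locality correspondence shows that $\E_i$ is subnormal in $\Gamma$.

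The main obstacle is the minimality assertion: for any saturated subsystem $\D\subseteq\F$ in which both $\E_1$ and $\E_2$ are subnormal, one must show $\Gamma\subseteq\D$. Here I would pick a regular locality $(\M,\Delta',T)$ for $\D$ and obtain the $\E_i$ as partial subnormal subgroups $\Y_1,\Y_2\leq\M$ with $\Y_i\cap T=S_i$. Applying Theorem~\ref{T:RegularLocalities} inside $\M$ produces a partial subnormal subgroup $\<\Y_1,\Y_2\>_\M$ of $\M$ whose associated fusion system $\Gamma'$ is a saturated subnormal subsystem of $\D$ on $\<S_1,S_2\>$. It then remains to identify $\Gamma$ with $\Gamma'$ as subsystems of $\F$. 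By the essential uniqueness of a regular locality attached to a saturated fusion system, it suffices to produce an isomorphism $\<\H_1,\H_2\>_\L\cong\<\Y_1,\Y_2\>_\M$ of partial groups covering the identity on $\<S_1,S_2\>$. I expect this to follow from the observation that both partial groups are generated by the same conjugation data coming from $\E_1\cup\E_2$, together with the iterated normalizer closures built into the subnormal-chain correspondence; showing rigorously that the iterated products produced in two distinct ambient localities encode identical fusion in $\F$ will be the technical heart of the argument.
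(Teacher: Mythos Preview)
Your first two steps are essentially what the paper does: construct $\H=\<\H_1,\H_2\>$ in a regular locality over $\F$, take $\Gamma=\F_T(\H)$, and read off that $\Gamma$ is subnormal in $\F$ over $T=\<S_1,S_2\>$ with $\E_i\subn\Gamma$. The difficulty is exactly where you locate it, in the minimality statement, and here your proposed argument has a genuine gap.

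Your plan is to build $\Gamma'$ inside a regular locality $\M$ over the test subsystem $\D$ and then identify $\Gamma$ with $\Gamma'$ by producing an isomorphism $\<\H_1,\H_2\>_\L\cong\<\Y_1,\Y_2\>_\M$ of partial groups. But the uniqueness theorem for regular localities only compares two regular localities \emph{over the same fusion system}; invoking it here presupposes $\Gamma=\Gamma'$, which is precisely the claim at issue. Nor does ``generated by the same conjugation data from $\E_1\cup\E_2$'' help: the partial subgroup $\<\H_1,\H_2\>$ contains many elements that are products of elements of $\H_1$ and $\H_2$ in the ambient locality, and the conjugation they induce on $T$ is not determined by $\E_1$ and $\E_2$ alone without further argument. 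There is no general mechanism that forces the two generated partial subgroups, taken in two unrelated ambient regular localities, to realise the same fusion.

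The paper closes this gap by a completely different route. The key new ingredient is the purely group-theoretic Theorem~\ref{T:Groups}, which is then leveraged (via Lemma~\ref{L:ApplyThmC}) to prove that, when $T\leq N_S(\E_1)\cap N_S(\E_2)$, one has the explicit formula
\[
\Gamma=\<(\E_1T)_\F,(\E_2T)_\F\>.
\]
The point is that the product subsystem $(\E_iT)$ has an intrinsic characterisation (as the unique saturated subsystem over $S_iT$ with $O^p$ equal to $O^p(\E_i)$), so $(\E_iT)_\F=(\E_iT)_\D$ and the right-hand side is visibly the same whether computed in $\F$ or in $\D$. This yields $\Gamma=\Gamma'\subn\D$ in the normalising case. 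The general case is then reduced to this one by an inductive argument on $|S:T|$ and $|\E_1|+|\E_2|$, replacing $\E_1$ by $\la\E_1,\E_1^x\ra$ for a suitable $x\in T$ not normalising $\E_1$. So the ``technical heart'' you anticipate is not a comparison of partial groups but an intrinsic fusion-theoretic description of $\Gamma$, and obtaining that description is exactly what Theorem~\ref{T:Groups} is for.
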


The reader is referred to Theorem~\ref{T:FusionSystemsn} for more information. It should be pointed out that
there is in many cases not a unique
smallest saturated subsystem (or a unique smallest subnormal subsystem) of $\F$ containing $\E_1$ and $\E_2$ (cf. Example~\ref{E:1}).

\smallskip

If $\E_1$ and $\E_2$ are \emph{normal} subsystems of $\F$, then we have introduced a product subsystem $\E_1\E_2$ already in a joint paper with Chermak \cite[Theorem~C]{Chermak/Henke}. In some sense we generalize this construction above, as it turns out that $\la \E_1,\E_2\ra=\E_1\E_2$ if $\E_1$ and $\E_2$ are normal subsystems. Theorem~\ref{T:FusionSystems} implies thus that $\E_1\E_2$ is the smallest saturated subsystem of $\F$ in which $\E_1$ and $\E_2$ are subnormal. Thereby, we also obtain a new characterization of $\E_1\E_2$.

\smallskip

Let us now say a few words about the proof of Theorem~\ref{T:FusionSystems}. As mentioned before, there is an essentially unique regular locality associated to every saturated fusion system. Moreover, if $(\L,\Delta,S)$ is a regular locality over a saturated fusion system $\F$, then it was  shown by Chermak and the author of this paper \cite[Theorem~F]{Chermak/Henke} that there is a natural one-to-one correspondence between the partial subnormal subgroups of $\L$ and the subnormal subsystems of $\F$. Therefore, if $\E_1$ and $\E_2$ are two subnormal subsystems of $\F$, then Theorem~\ref{T:RegularLocalities} can be used to show that there is a smallest subnormal subsystem $\E$ of $\F$ in which $\E_1$ and $\E_2$ are subnormal. Showing that $\E$ is the smallest \emph{saturated subsystem} in which $\E_1$ and $\E_2$ are subnormal is however still somewhat difficult. The essential ingredient in the proof of this property is the following group-theoretical result.

\begin{Th}\label{T:Groups}
Let $G$ be a finite group and $S\in\Syl_p(G)$. Let $H_1$ and $H_2$ be two subnormal subgroups of $G$. Then $T:=\<H_1,H_2\>\cap S=\<H_1\cap S,H_2\cap S\>$ and
\[\F_T(\<H_1,H_2\>)=\<\;\F_T(\<H_1,T\>),\;\F_T(\<H_2,T\>)\;\>.\]
\end{Th}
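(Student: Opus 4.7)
Set $K=\langle H_1,H_2\rangle$ and $K_i=\langle H_i,T\rangle$ for $i=1,2$. The first equality $T=\langle H_1\cap S, H_2\cap S\rangle$ is an immediate application of Meierfrankenfeld's Lemma (Lemma~\ref{L:Meierfrankenfeld}) to the subnormal subgroups $H_1,H_2\subn G$, and the same lemma yields $T\in\Syl_p(K)$, hence also $T\in\Syl_p(K_i)$. The inclusion $\F_T(K)\supseteq\langle\F_T(K_1),\F_T(K_2)\rangle$ is immediate from $K_i\le K$, so the content lies in the reverse inclusion.

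My plan for this is to invoke Alperin's fusion theorem: $\F_T(K)$ is generated as a fusion system by the automizers $\Aut_K(Q)=N_K(Q)/C_K(Q)$ as $Q$ runs over a conjugation family (for instance the $\F_T(K)$-essential subgroups together with $T$ itself). Thus it suffices to show that for each such $Q$ and every $g\in N_K(Q)$, the conjugation $c_g|_Q$ can be realized as a composition of $\F_T(K_i)$-morphisms ($i\in\{1,2\}$) between $T$-subgroups.

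I would prove this by induction on the sum $d(H_1,K)+d(H_2,K)$ of subnormal depths. In the base case, both $H_i$ are normal in $K$, so $K=H_1H_2$ and $K_i=H_iT$; writing $g=h_1h_2$ with $h_i\in H_i$, the naive decomposition $c_g=c_{h_2}\circ c_{h_1}$ passes through $Q^{h_1}$, which may not lie in $T$. However, $T\in\Syl_p(K_1)$ gives an element $k_1\in K_1$ with $Q^{h_1k_1}\le T$, so that $c_{h_1k_1}$ is a morphism in $\F_T(K_1)$, and the residual factor $c_{k_1^{-1}h_2}$ is then treated iteratively by the same Sylow trick inside $K_2$, using $T\subseteq K_1\cap K_2$ to shift intermediate subgroups freely between the two sides. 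In the inductive step, if for example $d(H_1,K)\ge 2$, I would replace $H_1$ by its normal closure $M_1:=H_1^K\lhd K$: the inductive hypothesis applied to the pair $(M_1,H_2)$ of strictly smaller depth-sum gives $\F_T(K)=\langle\F_T(\langle M_1,T\rangle),\F_T(K_2)\rangle$, and one then descends into $L:=\langle M_1,T\rangle$ (where $H_1$ has strictly smaller subnormal depth) to reduce $\F_T(L)$ to $\langle\F_T(K_1),\F_T(K_2)\rangle$.

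The main obstacle is the normal base case: organizing the iterated Sylow corrections so that the resulting factorization uses only $\F_T(K_1)$- and $\F_T(K_2)$-morphisms, and not merely $\F_T(K)$-morphisms, requires careful bookkeeping. The key fact enabling this is that $T$ is a Sylow $p$-subgroup of each $K_i$—a consequence of the subnormality of $H_i$ combined with Meierfrankenfeld's Lemma—so that every correction can be arranged inside $K_i$ rather than only inside the ambient group $K$. A secondary subtlety is the inductive descent into $L$, where only $H_1$ is naturally available as a subnormal subgroup: here one leverages the structure $H_1\subn M_1\lhd L$ and runs the same alternating Sylow argument to bring $\F_T(L)$ down to $\langle\F_T(K_1),\F_T(K_2)\rangle$.
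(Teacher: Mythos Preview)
Your outline has a real gap in the inductive step, and the base case is also not quite as you describe it.

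\textbf{Base case.} When $H_1,H_2\unlhd K$ your ``iterated Sylow correction'' has no visible termination mechanism: after correcting $h_1$ by $k_1\in K_1$ you are left with $k_1^{-1}h_2$, which lies in $K_1H_2$, not in $K_2$, and pushing the $H_1$-part past $h_2$ just reproduces the same problem with the roles swapped. What actually works is a \emph{one-step} correction: writing $g=h_1'h_2'$ one checks $S_g^{h_1'}\le TH_1\cap TH_2$, and since $H_1,H_2\unlhd K$ one has $TH_1\cap TH_2=T(H_1\cap H_2)$ (because $O^p(TH_1\cap TH_2)\le H_1\cap H_2$); a single Sylow correction by $x\in H_1\cap H_2$ then gives $h_1=h_1'x$, $h_2=x^{-1}h_2'$ with $S_g=S_{(h_1,h_2)}$. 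This is Lemma~\ref{L:ProductNormal} in the paper, and it is the right form of the base case.

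\textbf{Inductive step.} The descent into $L=\langle M_1,T\rangle=M_1T$ is where the argument breaks. You need $\F_T(L)\subseteq\langle\F_T(K_1),\F_T(K_2)\rangle$, but your induction hypothesis is a statement about pairs of subnormal subgroups generating the ambient group, and $L$ is not of that shape: $H_1$ is subnormal in $L$, but there is no natural second subnormal subgroup $X$ with $\langle H_1,X\rangle=L$ (or even $\langle H_1,X\rangle T=L$) and $\langle X,T\rangle$ under control. In particular $T$ itself is a Sylow subgroup of $L$, hence typically not subnormal, so the pair $(H_1,T)$ does not feed back into the induction; and $H_2\cap M_1$ together with $H_1$ need not generate $M_1$. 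Invoking ``the same alternating Sylow argument'' does not help, because in $L$ you no longer have two \emph{normal} subgroups to alternate between: $M_1\unlhd L$ but $H_1$ is only subnormal and $T$ is not normal.

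The paper's proof is organized quite differently. It is a minimal-counterexample argument that first reduces to $S\le N_G(H_1)\cap N_G(H_2)$ and $G=\langle H_1,H_2\rangle$, and then (via Lemma~\ref{L:ProductNormalSubnormal}, which handles the case where \emph{one} of the $H_i$ is normal, not just the case where both are) forces a sequence of structural constraints: $H_1\cap N_2=H_1\cap H_2=H_2\cap N_1\unlhd G$, then $O^{p'}(H_2)=O^{p'}(N_2)\unlhd G$, then $S_2\unlhd G$, and finally $S_1=S_2=S\unlhd G$, a contradiction. The passage from ``both normal'' to ``one normal, one subnormal and $S$-invariant'' (Lemma~\ref{L:ProductNormalSubnormal}) is itself nontrivial and is precisely the missing ingredient that would be needed to repair your descent step.
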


We feel that Theorem~\ref{T:Groups} might be of independent interest to finite group theorists. Therefore, we seek to keep the proof of this theorem elementary (cf. Subsection~\ref{SS:GroupFusion}). Theorem~\ref{T:Groups} can also be used to show the following Proposition (where we use the notation introduced in Theorem~\ref{T:FusionSystems}).

\begin{Prop}\label{P:GroupFusionBracket}
Let $G$ be a finite group and $S\in\Syl_p(G)$. Suppose $H_1$ and $H_2$ are subnormal subgroups of $G$. Then
\[\F_{\<H_1,H_2\>\cap S}(\<H_1,H_2\>)=\la\F_{H_1\cap S}(H_1),\F_{H_2\cap S}(H_2)\ra.\]
\end{Prop}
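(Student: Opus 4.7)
Set $H:=\<H_1,H_2\>$, $S_i:=H_i\cap S$ for $i=1,2$, and $T:=H\cap S$. By Wielandt's Theorem, $H\subn G$, so $T\in\Syl_p(H)$; and by Theorem~\ref{T:Groups}, $T=\<S_1,S_2\>$. Thus both $\F_T(H)$ and $\E:=\la\F_{S_1}(H_1),\F_{S_2}(H_2)\ra$ are saturated fusion systems on $T$, and the plan is to prove the two inclusions $\E\subseteq\F_T(H)$ and $\F_T(H)\subseteq\E$ separately.

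For $\E\subseteq\F_T(H)$: Since $H_i\subn G$ and $H_i\leq H$, also $H_i\subn H$. A subnormal chain of $H_i$ in $H$ yields a subnormal chain of fusion systems $\F_{S_i}(H_i)\subn\F_T(H)$, using the standard fact that $N\unlhd M$ with $P\in\Syl_p(M)$ implies $\F_{N\cap P}(N)\unlhd\F_P(M)$. Thus $\F_T(H)$ is a saturated subsystem of $\F_S(G)$ in which both $\F_{S_i}(H_i)$ are subnormal, and the inclusion follows from the minimality in Theorem~\ref{T:FusionSystems}.

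For $\F_T(H)\subseteq\E$: By the second identity of Theorem~\ref{T:Groups}, $\F_T(H)=\<\F_T(K_1),\F_T(K_2)\>$, where $K_i:=\<H_i,T\>$ and the angle brackets denote the fusion subsystem generated. Since $\E$ lives on $T$, it suffices to show $\F_T(K_i)\subseteq\E$ for $i=1,2$. Fix $i$. One verifies $T\in\Syl_p(K_i)$ (as $|K_i|_p=|T|$) and $H_i\subn K_i$ with $H_i\cap T=S_i$, so $\F_T(K_i)$ is saturated and contains $\F_{S_i}(H_i)$ as a subnormal subsystem. The expected main obstacle is to prove that $\F_T(K_i)$ is the smallest saturated fusion system on $T$ admitting $\F_{S_i}(H_i)$ as a subnormal subsystem; granted this, $\F_T(K_i)\subseteq\E$ follows at once, since $\E$ is such a subsystem. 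To establish this minimality I would pass to the regular locality over $\F_T(H)$ and exploit the correspondence between partial subnormal subgroups and subnormal subsystems: the partial subnormal subgroup corresponding to $\F_T(K_i)$ should be generated, as a partial subgroup, by the one corresponding to $\F_{S_i}(H_i)$ together with $T$, mirroring the group-theoretic identity $K_i=\<H_i,T\>$; by Theorem~\ref{T:RegularLocalities}, any saturated subsystem of $\F_T(H)$ on $T$ containing $\F_{S_i}(H_i)$ subnormally must then contain $\F_T(K_i)$.
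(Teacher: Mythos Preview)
Your inclusion $\E\subseteq\F_T(H)$ is fine and is exactly the easy direction: $\F_T(H)$ is a saturated subsystem of $\F_S(G)$ in which both $\E_i$ are subnormal, so Theorem~\ref{T:FusionSystemsn}(b) gives $\E\subn\F_T(H)$.

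The reverse inclusion has a genuine gap. You reduce correctly, via Theorem~\ref{T:Groups}, to showing $\F_T(K_i)\subseteq\E$ with $K_i=\<H_i,T\>$, and then assert that $\F_T(K_i)$ is the smallest saturated subsystem over $T$ in which $\E_i$ is subnormal. Your proposed justification through a regular locality over $\F_T(H)$ does not go through: $K_i$ is in general \emph{not} subnormal in $H$ (for a trivial instance take $H_1\leq H_2$, so $H=H_2$, and $K_1=\<H_1,T\>$ contains the full Sylow $T$ of $H$ without being subnormal unless $K_1=H$). Hence there is no partial subnormal subgroup of the regular locality corresponding to $\F_T(K_i)$, and likewise $\<\H_i',T\>$ in the locality is not known to be partial subnormal; Theorem~\ref{T:RegularLocalities} gives you nothing here. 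Without subnormality, there is no available fusion-theoretic description of $\F_T(K_i)$ to compare with $\E$.

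The paper's proof circumvents exactly this obstacle. It runs a minimal-counterexample argument (maximizing first $|S:T|$, then $|H_1|+|H_2|$). In the base case $T\leq N_S(H_1)\cap N_S(H_2)$ your idea works: then $K_i=H_iT$, Lemma~\ref{L:Hyperfocal}(b) gives $\F_T(K_i)=(\E_iT)_\F$, and Lemma~\ref{L:n=2Help1}(a) together with Theorem~\ref{T:Groups} yields $\la\E_1,\E_2\ra=\<(\E_1T)_\F,(\E_2T)_\F\>=\F_T(H)$. When $T\not\leq N_S(H_1)$, the paper picks $x\in N_T(N_T(H_1))\setminus N_T(H_1)$ and replaces $H_1$ by $\tilde H_1=\<H_1,H_1^x\>$; the key point is that $\tilde H_1$ is again \emph{subnormal} (Wielandt), so induction applies, and one uses Lemma~\ref{L:n=2Help1}(b) to see $\la\E_1,\E_2\ra=\la\E_1,\E_1^x,\E_2\ra=\la\la\E_1,\E_1^x\ra,\E_2\ra$. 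This conjugation-and-enlarge step is precisely the missing idea needed to escape the non-subnormality of $K_i$.
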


This says in particular that the fusion system $\F_{\<H_1,H_2\>\cap S}(\<H_1,H_2\>)$ is determined by the fusion systems $\F_S(G)$, $\F_{H_1\cap S}(H_1)$ and $\F_{H_2\cap S}(H_2)$.

\subsubsection*{Organization of the paper} We start by proving the basic group-theoretical results in Section~\ref{S:Groups}. More precisely, Meierfrankenfeld's Lemma is proved in Subsection~\ref{SS:Meierfrankenfeld} and building on that, Theorem~\ref{T:Groups} is proved in Subsection~\ref{SS:GroupFusion}. After summarizing some background on partial groups and localities in Section~\ref{S:PartialLocalities},  Theorem~\ref{T:RegularLocalities} and some related results are then proved in Section~\ref{S:Regular}. Finally, Theorem~\ref{T:FusionSystems} and Proposition~\ref{P:GroupFusionBracket} are shown in Section~\ref{S:FusionSystems}.

\smallskip

Throughout this paper, homomorphisms are written on the right hand side of the argument (and the composition of homomorphisms is defined accordingly). The reader is referred to
\cite[Sections~I.1-I.7]{Aschbacher/Kessar/Oliver:2011} for basic definitions and results on fusion systems.

\subsection*{Acknowledgements} I'd like to thank Bernd Stellmacher for pointing out to me that Meierfrankenfeld's Lemma (Lemma~\ref{L:Meierfrankenfeld}) holds. Moreover, I'm very grateful to Ulrich Meierfrankenfeld for allowing me to include his lemma and its proof in this paper.

\section{Group-theoretic results}\label{S:Groups}

\textbf{Throughout this section let $G$ be a finite group and $S\in\Syl_p(G)$.}

\subsection{The proof of Meierfrankenfeld's Lemma}\label{SS:Meierfrankenfeld}

The following lemma and the idea for its proof are due to Ulrich Meierfrankenfeld. We use throughout that two subnormal subgroups generate according to Wielandt's Join Theorem a subnormal subgroup. Moreover, we use frequently that the intersection of $S$ with a subnormal subgroup $H$ of $G$ is a Sylow $p$-subgroup of $H$.

\begin{lemma}[Meierfrankenfeld]\label{L:Meierfrankenfeld}
 Let $H_1$ and $H_2$ be subnormal subgroups of $G$. Then
 \[\<H_1,H_2\>\cap S=\<H_1\cap S,H_2\cap S\>.\]
\end{lemma}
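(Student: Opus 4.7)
The plan is to prove the nontrivial inclusion $\<H_1,H_2\>\cap S\subseteq\<H_1\cap S,H_2\cap S\>$ by induction on $|G|$; the reverse inclusion is immediate because $\<H_1\cap S,H_2\cap S\>$ is generated by two subgroups of the $p$-group $S$ that both lie inside $\<H_1,H_2\>$, and hence is a $p$-subgroup of $\<H_1,H_2\>$ contained in $S$. Throughout I would rely on two standard facts about subnormal subgroups: Wielandt's theorem, which ensures that $L:=\<H_1,H_2\>$ is subnormal in $G$, and the observation that for any subnormal subgroup $H$ of $G$ the intersection $H\cap S$ is a Sylow $p$-subgroup of $H$.

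First I would reduce to the case $G=\<H_1,H_2\>$. Indeed, $L\cap S\in\Syl_p(L)$ by the above, and therefore $H_i\cap S=H_i\cap(L\cap S)\in\Syl_p(H_i)$ as well. If $L$ is a proper subgroup of $G$, the inductive hypothesis applied to the triple $(L,H_1,H_2)$ with Sylow subgroup $L\cap S$ gives the conclusion. So we may assume $G=\<H_1,H_2\>$, and it suffices to prove $S=\<H_1\cap S,H_2\cap S\>=:P$. Next I would choose a minimal normal subgroup $N$ of $G$ and pass to $\bar G:=G/N$. Since $H_i\cap S\in\Syl_p(H_i)$, its image in $\bar G$ is a Sylow $p$-subgroup of $\bar H_i$, and so equals $\bar H_i\cap\bar S$. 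Applying the inductive hypothesis in $\bar G$, together with $\bar G=\<\bar H_1,\bar H_2\>$, yields $\bar S=\<\bar H_1\cap\bar S,\bar H_2\cap\bar S\>$; lifting this back to $G$, it reads $SN=PN$.

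A short index computation using $P\subseteq S$ and $P\cap N\subseteq S\cap N$ then shows that $[S:P]=[S\cap N:P\cap N]$, so the proof reduces to verifying the inclusion $S\cap N\subseteq P$, where $S\cap N$ is a Sylow $p$-subgroup of the normal subgroup $N$. This is straightforward in the easy case $N\subseteq H_i$ for some $i$, since then $S\cap N\subseteq H_i\cap S\subseteq P$. The main obstacle is the remaining case, in which $N$ is contained in neither $H_1$ nor $H_2$. Here I would exploit minimality of $N$: the subgroup $H_i^G\cap N$ is normal in $G$ and contained in $N$, hence equals $N$ or $1$; if it equals $1$, then $[N,H_i^G]\subseteq N\cap H_i^G=1$, forcing $N$ to centralize $H_i$. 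Combining over $i=1,2$ leaves two subcases, namely either $N\subseteq Z(G)=Z(\<H_1,H_2\>)$, or $N\subseteq H_i^G$ for some $i$. In each subcase the task is to upgrade the equation $SN=PN$ to the inclusion $S\cap N\subseteq P$. I expect this to be the delicate step; I would attempt it by a secondary induction on an auxiliary invariant such as the sum $|H_1^G|+|H_2^G|$ or the sum of the subnormal defects of $H_1$ and $H_2$ in $G$, combined with the replacement trick $H_i\rightsquigarrow H_iN$, which preserves subnormality and the property of generating $G$ while (depending on the subcase) strictly decreasing the relevant invariant and thereby producing a contradiction with the minimal counterexample assumption.
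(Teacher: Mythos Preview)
Your reduction to $G=\langle H_1,H_2\rangle$, the passage to $\bar G=G/N$ for a minimal normal subgroup $N$, and the index computation yielding $[S:P]=[S\cap N:P\cap N]$ are all sound, and the easy case $N\le H_i$ is handled correctly. The gap is in the endgame: your proposed replacement $H_i\rightsquigarrow H_iN$ is circular. Concretely, if you set up a secondary minimality (say $|G:H_1|+|G:H_2|$ minimal) and replace $H_1$ by $H_1N$ when $N\not\le H_1$, then since $(H_1N)\cap S=(H_1\cap S)(N\cap S)$, the conclusion that $(G,H_1N,H_2)$ is not a counterexample gives only
\[
S=\langle (H_1\cap S)(N\cap S),\,H_2\cap S\rangle=\langle P,\,N\cap S\rangle=P(N\cap S),
\]
which is exactly the statement $SN=PN$ you already obtained from the quotient. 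Neither of your proposed auxiliary invariants ($|H_1^G|+|H_2^G|$ or the sum of defects) changes this: the replacement never produces new information beyond $S=P(N\cap S)$, so the inclusion $N\cap S\subseteq P$ remains unproved. Your dichotomy ``$N\le Z(G)$ or $N\le H_i^G$'' is also not a true case split (the cases overlap), and in neither branch do you indicate a mechanism different from the circular one.

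The paper's proof avoids this trap by working from the top rather than the bottom: instead of a minimal normal subgroup, it chooses maximal normal subgroups $M_i\unlhd G$ with $H_i\le M_i$ and $G/M_i$ simple, and uses the secondary minimality with the auxiliary subgroup $\langle H_1,\,H_2\cap M_1\rangle$ to force $H_1\cap H_2=H_1\cap M_2=H_2\cap M_1\unlhd G$. Passing to $G/(H_1\cap H_2)$ then reduces to the case $H_1\cap M_2=H_2\cap M_1=1$, from which both $H_i\cong G/M_{3-i}$ are \emph{simple}. The contradiction then comes from the structure of the generalized Fitting subgroup: each $H_i$ is either a component or lies in some $O_q(G)$, and in every configuration $\langle H_1,H_2\rangle$ is forced to be either abelian or a $q$-group, contradicting earlier reductions. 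This structural step (forcing the $H_i$ to be simple and invoking components) is the missing idea; your minimal-normal-subgroup approach does not reach it.
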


\begin{proof}
Let $(G,H_1,H_2)$ be a counterexample such that first $|G|$ and then $|G:H_1|+|G:H_2|$ is minimal. The minimality of $G$ implies that $G=\<H_1,H_2\>$. As $(G,H_1,H_2)$ is a counterexample, we have moreover  $H_1\not\leq H_2$ and $H_2\not\leq H_1$. Similarly, one observes that
\begin{equation}\label{E:TrivialCases0}
G\neq S\mbox{ and }p\mbox{ divides the order of }G.
\end{equation}
Assume $H_1\unlhd G$. Then $G=\<H_1,H_2\>=H_1H_2$, $H_1\cap S\unlhd S$ and $\<H_1\cap S,H_2\cap S\>=(H_1\cap S)(H_2\cap S)$. As $S\cap H_1\cap H_2\in\Syl_p(H_1\cap H_2)$, the order formula for products of subgroups (cf. \cite[1.1.6]{KS}) yields that $(H_1\cap S)(H_2\cap S)$ is a Sylow $p$-subgroup of $G=H_1H_2$ and thus equal to $S$. This contradicts the assumption that $(G,H_1,H_2)$ is a counterexample and shows thus that
\begin{equation}\label{E:H1NotNormal}
 H_1\mbox{ is not normal in }G.
\end{equation}
As $H_i\neq G$ is subnormal in $G$ for each $i=1,2$, there exists $M_i\unlhd G$ such that $H_i\leq M_i$ and $G/M_i$ is simple.
Set
\[H:=\<H_1,H_2\cap M_1\>\]
Since $H_1$ and $H_2\cap M_1$ are subnormal in $G$, it follows from Wielandt's Join Theorem that $H$ is subnormal in $G$ and thus $ H\cap S\in\Syl_p(H)$. As $H\leq M_1<G$, the minimality of $|G|$ implies that $(H,H_1,H_2\cap M_1)$ is not a counterexample. Hence,
\begin{equation}\label{E:HcapS}
H\cap S=\<H_1\cap S,H_2\cap M_1\cap S\>.
\end{equation}
Assume now that $H_2\cap M_1\not\leq H_1$ and so $|G:H|<|G:H_1|$. As $G=\<H,H_2\>$, the minimality of $|G:H_1|+|G:H_2|$ implies then that $S=\<H\cap S,H_2\cap S\>$. So $S=\<H_1\cap S,H_2\cap S\>$ by \eqref{E:HcapS},
contradicting the assumption that $(G,H_1,H_2)$ is a counterexample. This proves that $H_2\cap M_1\leq H_1$, which yields $H_1\cap H_2=H_2\cap M_1\unlhd H_2$. A symmetric argument shows that $H_1\cap H_2=H_1\cap M_2\unlhd H_1$ and thus
\[H_1\cap H_2=H_1\cap M_2=H_2\cap M_1\unlhd \<H_1,H_2\>=G.\]
Consider now $\ov{G}:=G/H_1\cap H_2$. If $H_1\cap H_2\neq 1$, then the minimality of $|G|$ implies that $(\ov{G},\ov{H_1},\ov{H_2})$ is not a counterexample and hence 
\[\ov{S}=\<\ov{H_1}\cap \ov{S},\ov{H_2}\cap\ov{S}\>=\<\ov{H_1\cap S},\ov{H_2\cap S}\>.\]
This implies however that $S=\<H_1\cap S,H_2\cap S\>$, again a contradiction. Thus, we have
\[1=H_1\cap H_2=H_1\cap M_2=H_2\cap M_1.\]
Let $i\in\{1,2\}$ and set $j:=3-i$. Recall that $G/M_j$ is simple and has thus no non-trivial subnormal subgroup. Since $H_i\not\leq M_j$ and $H_i\cap M_j=1$, it follows that
\[G/M_j=H_iM_j/M_j\cong H_i/H_i\cap M_j\cong H_i.\]
In particular, $H_i$ is simple. As $H_i$ is subnormal, it follows that $H_i$ is a component of $G$ if $H_i$ is non-abelian. Otherwise, $H_i$ is cyclic of prime order $q$, and then contained in $O_q(G)$ (cf. \cite[6.3.1]{KS}). Since any two distinct components centralize each other, and every component centralizes
\[F(G)=\bigtimes\limits_{\tiny{\mbox{$q$ prime}}}O_q(G),\]
it follows that $H_1$ and $H_2$ either centralize each other, or $\<H_1,H_2\>\leq O_q(G)$ for the some prime $q$. As $G=\<H_1,H_2\>$, we get a contradiction to \eqref{E:H1NotNormal} in the first case, and a contradiction to \eqref{E:TrivialCases0} in the second case. This proves the assertion.
\end{proof}

\subsection{The proof of Theorem~\ref{T:Groups}}\label{SS:GroupFusion}

As before we assume that $G$ is a finite group and $S\in\Syl_p(G)$. If $\Delta$ is the set of all subgroups of $S$, then $(G,\Delta,S)$ is a locality. Thus, we can adopt the usual notation for localities and we could cite theorems on localities. However, since Theorem~\ref{T:Groups} might be of independent interest to group theorists, we will keep its proof self-contained. Adopting the usual notation for localities, we define for $f\in G$
\[S_f:=\{s\in S\colon s^f\in S\}=(S\cap S^f)^{f^{-1}}.\]
Note that $S_f$ is a subgroup of $S$. If $u=(f_1,\dots,f_n)$ is a word in $G$, then we set similarly
\[S_u:=\{s\in S\colon s^{f_1\cdots f_i}\in S\mbox{ for all }i=1,2,\dots,n\}.\]
Again one observes easily that $S_u$ is a subgroup of $S$. Notice moreover that the following holds:
\begin{equation}\label{E:SuSf}
\mbox{If }u=(f_1,\dots,f_n)\mbox{ is a word in $G$ and }f=f_1f_2\cdots f_n,\mbox{ then }S_u\leq S_f.
\end{equation}

\begin{lemma}\label{L:FrattiniGroupFusion}
Let $N\unlhd G$ and $T:=S\cap N$. Then for every $g\in G$ there exist $n\in N$ and $f\in N_G(T)$ with $g=nf$ and $S_g=S_{(n,f)}$. In particular,
\[\F_S(G)=\<\F_S(NS),\F_S(N_G(T))\>.\]
\end{lemma}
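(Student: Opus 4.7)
The plan is to first construct a suitable factorization $g=nf$, and then to deduce the fusion-system equality as a formal consequence. Setting $f:=n^{-1}g$, the requirement $f\in N_G(T)$ amounts to $T^{g^{-1}n}=T$, while the equality $S_g = S_{(n,f)} = S_n\cap S_g$ amounts to $(S_g)^n\leq S$. What I need, therefore, is a single element $n\in N$ that simultaneously conjugates $T^{g^{-1}}$ to $T$ and $S_g$ into $S$.

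The idea is to bundle these two demands into one $p$-group. I consider $P:=\<S_g,T^{g^{-1}}\>$ and observe that $P^g=\<S_g^g,T\>\leq S$, by the definition of $S_g$ and since $T\leq S$. Hence $P$ is a $p$-subgroup of $G$. Because $T^{g^{-1}}\leq N$ and $S_g\leq S$, the group $P$ sits inside $NS$, and since $[NS:S]=[N:T]$ is coprime to $p$, $S$ is a Sylow $p$-subgroup of $NS$. Sylow's theorem then produces $h\in NS$ with $P^h\leq S$; writing $h=n's'$ with $n'\in N$ and $s'\in S$, one sees that $n:=n'$ already conjugates $P$ into $S^{s'^{-1}}=S$. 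From $(T^{g^{-1}})^n\leq S\cap N = T$, together with the fact that $|T^{g^{-1}n}|=|T|$, I get $T^{g^{-1}n}=T$, so $f:=n^{-1}g\in N_G(T)$; from $(S_g)^n\leq S$ I get the additional property $S_g\subseteq S_n$, as required.

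For the fusion-system statement, note that $T\unlhd S$ gives $S\leq N_G(T)$, so both $\F_S(NS)$ and $\F_S(N_G(T))$ are fusion subsystems of $\F_S(G)$ on $S$. A general morphism in $\F_S(G)$ is of the form $c_g\co S_g\to S$, and the factorization $g=nf$ just constructed exhibits $c_g$ as the composition of $c_n|_{S_g}$, a morphism in $\F_S(NS)$ (since $n\in N\leq NS$ and $(S_g)^n\leq S$), with $c_f|_{(S_g)^n}$, a morphism in $\F_S(N_G(T))$. The asserted equality of generated fusion systems follows immediately (the reverse containment is trivial).

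The main obstacle is achieving the two conditions on $n$ simultaneously. A straight Frattini argument applied to $N$ and $T\in\Syl_p(N)$ produces some $n\in N$ with $T^{g^{-1}n}=T$, but offers no control over $(S_g)^n$. Packaging $S_g$ together with $T^{g^{-1}}$ into the single $p$-group $P$, and then applying Sylow inside the intermediate group $NS$, is the key move that lets us handle both conditions at once.
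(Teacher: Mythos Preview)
Your proof is correct, but it takes a different route from the paper's. The paper first applies the Frattini argument to obtain a preliminary factorization $g=mh$ with $m\in N$ and $h\in N_G(T)$, then observes that $(S_g)^m\leq N_G(T)\cap SN=SN_N(T)$ (via a Dedekind argument), and finally applies Sylow inside $SN_N(T)$ to find $x\in N_N(T)$ with $(S_g)^{mx}\leq S$; the corrected factorization is $g=(mx)(x^{-1}h)$. Your approach is more direct: you package both requirements into the single $p$-subgroup $P=\langle S_g,T^{g^{-1}}\rangle$ of $NS$ and apply Sylow once inside $NS$ to conjugate $P$ into $S$, which simultaneously yields $(S_g)^n\leq S$ and $T^{g^{-1}n}=T$. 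Your argument avoids the Frattini step and the Dedekind identity, at the cost of needing the small observation that conjugating by the $N$-part of an element of $NS$ already suffices. Both arguments are short; yours is slightly more economical, while the paper's makes the two separate constraints (and how each is handled) more visible.
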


\begin{proof}
It is sufficient to prove the first part of the assertion. Indeed, this part is a special case of Stellmacher's Splitting Lemma for localities \cite[Lemma~3.12]{Chermak:2015}, but we give an elementary proof.

\smallskip

Let $g\in G$. By the Frattini Argument, there exist $m\in N$ and $h\in N_G(T)$ with $g=mh$. Set $P:=S_g$. As $(P^m)^h=P^g\leq S\leq N_G(T)$ and $h\in N_G(T)$, we have $P^m\leq N_G(T)$. Moreover, $P^m\leq SN$, so by a Dedekind Argument $P^m\leq N_G(T)\cap SN=SN_N(T)$. By Sylow's theorem, there exists thus $x\in N_N(T)$ with $P^{mx}=(P^m)^x\leq S$. Notice that $n:=mx\in N$, $f:=x^{-1}h\in N_G(T)$ and $g=mh=nf$. In particular, $S_{(n,f)}\leq S_{nf}=P$ by \eqref{E:SuSf}. As $P^n\leq S$ and $P^{nf}=P^g\leq S$, we have also $P\leq S_{(n,f)}$. This proves the assertion.
\end{proof}

\begin{lemma}\label{L:ProductNormal}
 Let $M,N\unlhd G$ and $g\in MN$. Then there exist $m\in M$ and $n\in N$ with $g=mn$ and $S_g=S_{(m,n)}$. 
\end{lemma}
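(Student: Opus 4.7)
The plan is to fix any decomposition $g = m_0 n_0$ with $m_0 \in M$ and $n_0 \in N$, set $P := S_g$ and $R := P^{m_0}$, and find some $x \in M \cap N$ with $R^x \leq S$. Then $m := m_0 x \in M$ and $n := x^{-1} n_0 \in N$ satisfy $g = mn$ and $P^m = R^x \leq S$ as well as $P^{mn} = P^g \leq S$, so combined with \eqref{E:SuSf} we obtain $S_g = S_{(m,n)}$.

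First I would check $R \leq MS \cap NS$. For $R \leq MS$, expand $m_0^{-1} p m_0 = (m_0^{-1} p m_0 p^{-1}) \cdot p$ for each $p \in P$ and note that the first factor lies in $M$ since $M \unlhd G$, so $m_0^{-1} p m_0 \in MP \leq MS$. For $R \leq NS$, observe that $R^{n_0} = P^g \leq S \leq NS$ and $n_0 \in N$, so $R \leq NS$.

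The structural heart of the argument is the identity $MS \cap NS = (M \cap N) S$. I would prove it by passing to $\ov G := G/(M \cap N)$, where $\ov M \cap \ov N = 1$ makes $\ov M\,\ov N$ the internal direct product $\ov M \times \ov N$. Setting $T_M := S \cap M$ and $T_N := S \cap N$, the images $\ov{T_M}$ and $\ov{T_N}$ are Sylow $p$-subgroups of $\ov M$ and $\ov N$ respectively, and they commute in $\ov M \times \ov N$, so their product $\ov{T_M}\,\ov{T_N}$ is a Sylow $p$-subgroup of $\ov M \times \ov N$; since $\ov{T_M}\,\ov{T_N} \leq \ov S$ this forces $\ov S \cap \ov M\,\ov N = \ov{T_M}\,\ov{T_N}$. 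For $\ov a \in \ov M\,\ov S \cap \ov N\,\ov S$ written as $\ov m\,\ov s_1 = \ov n\,\ov s_2$, the element $\ov n^{-1} \ov m = \ov m\,\ov n^{-1}$ lies in $\ov S \cap \ov M\,\ov N = \ov{T_M}\,\ov{T_N}$, and the direct product structure then forces $\ov m \in \ov{T_M} \leq \ov S$, so $\ov a \in \ov S$. Lifting back yields $MS \cap NS \subseteq S(M \cap N) = (M \cap N)S$; the reverse inclusion is trivial.

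With this identity in hand, $R$ is a $p$-subgroup of $(M \cap N) S$, where $S$ is Sylow since $(M \cap N) \cap S \in \Syl_p(M \cap N)$. Sylow's theorem in $(M \cap N) S$ then yields $y \in (M \cap N) S$ with $R^y \leq S$; writing $y = xs$ with $x \in M \cap N$ and $s \in S$, the relation $(R^x)^s = R^y \leq S$ gives $R^x \leq S^{s^{-1}} = S$, producing the desired $x$. The main obstacle is the structural identity $MS \cap NS = (M \cap N) S$; the rest is standard Sylow bookkeeping in the spirit of Lemma~\ref{L:FrattiniGroupFusion}.
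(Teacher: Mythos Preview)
Your proof is correct and follows essentially the same strategy as the paper: pick an arbitrary factorization, show the conjugate $S_g^{m_0}$ lands in $MS \cap NS = (M\cap N)S$, and then use Sylow to adjust by an element of $M\cap N$. The only real difference is in establishing $MS \cap NS = (M\cap N)S$: the paper does this in one line by observing that $O^p(SM\cap SN)$ is generated by $p'$-elements, which all lie in $M$ and in $N$ since $SM/M$ and $SN/N$ are $p$-groups, whence $O^p(SM\cap SN)\leq M\cap N$ and $SM\cap SN = S\cdot O^p(SM\cap SN)\leq S(M\cap N)$; your quotient-by-$M\cap N$ argument is a perfectly valid alternative, just a bit longer.
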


\begin{proof}
This is a special case of \cite[Theorem~1]{Henke:2015a}, but again, we give an elementary proof. Let $g\in MN$ and write $g=m'n'$ for some $m'\in M$ and $n'\in N$. Notice that $Q:=S_g^{m'}\leq S^{m'}\leq \<S,M\>=SM$. Moreover, $Q^{n'}=S_g^g\leq S$ and thus $Q\leq S^{(n')^{-1}}\leq \<S,N\>=SN$. Hence, $Q\leq SM\cap SN$. Notice that $O^p(SM\cap SN)\leq O^p(SM)\cap O^p(SN)\leq M\cap N$ and thus, as $S$ is a Sylow $p$-subgroup of $SM\cap SN$, we have $SM\cap SN=SO^p(SM\cap SN)=S(M\cap N)$. It follows now from Sylow's theorem that there exists $x\in M\cap N$ with $S_g^{m'x}=Q^x\leq S$. Now $m:=m'x\in M$, $n:=x^{-1}n'\in N$, $g=mn$, $S_g^m\leq S$ and $S_g^{mn}=S_g^g\leq S$. In particular, $S_g\leq S_{(m,n)}$. Together with \eqref{E:SuSf}, we obtain $S_g=S_{(m,n)}$.
\end{proof}

\begin{lemma}\label{L:GetHunlhdG}
Let $H\subn G$ and $M:=\<H^G\>$. If $G=MN_G(H)$, then $H=M\unlhd G$.
\end{lemma}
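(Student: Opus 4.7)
The plan is to prove $H = M$; since $M = \<H^G\>$ is automatically normal in $G$ by construction, this will immediately yield $H \unlhd G$. I would proceed in two steps: first, reduce the normal closure of $H$ in $G$ to its normal closure in $M$ by exploiting the Frattini-type hypothesis; second, use the subnormality of $H$ to force the normal closure of $H$ in $M$ to coincide with $H$.

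For the first step, I would use that $M \unlhd G$, so the hypothesis $G = MN_G(H)$ is equivalent to $G = N_G(H)M$. Writing an arbitrary $g \in G$ as $g = nm$ with $n \in N_G(H)$ and $m \in M$, one computes $H^g = (H^n)^m = H^m$ because $n$ normalizes $H$. Hence every $G$-conjugate of $H$ is already an $M$-conjugate of $H$, and consequently
\[M = \<H^G\> = \<H^M\>.\]

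For the second step, $H \subn G$ together with $H \leq M \leq G$ implies $H \subn M$. Fix a subnormal series $H = H_0 \unlhd H_1 \unlhd \cdots \unlhd H_n = M$ in $M$. Since $H_{n-1} \unlhd M$ and $H \leq H_{n-1}$, the subgroup $H_{n-1}$ contains every $M$-conjugate of $H$, so $\<H^M\> \leq H_{n-1}$; combined with $\<H^M\> = M$ this forces $H_{n-1} = M$. Iterating this observation (equivalently, inducting on the length of the subnormal series) shrinks the series until one is left with $H = M$, which completes the proof.

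I do not anticipate any real obstacle: the lemma ultimately rests on the standard fact that a proper subnormal subgroup of a finite group has proper normal closure, combined with the Frattini-type identification $\<H^G\> = \<H^M\>$ enabled by the assumption $G = MN_G(H)$. The only mild point of care is verifying that this identification actually holds, which is exactly what the normality of $M$ in $G$ makes routine.
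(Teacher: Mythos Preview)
Your proof is correct and follows essentially the same two-step strategy as the paper: first establish $\langle H^M\rangle = M$, then use subnormality of $H$ in $M$ to force $H = M$. The only minor difference is in the first step: you compute directly that $H^{nm} = H^m$ for $g = nm \in N_G(H)M$, whereas the paper argues that $K := \langle H^M\rangle$, being the smallest normal subgroup of $M$ containing $H$, is $N_G(H)$-invariant and hence normal in $G = MN_G(H)$, forcing $M = \langle H^G\rangle \leq K$.
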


\begin{proof}
Assume $G=MN_G(H)$ and consider $K:=\<H^M\>$. Notice that $G$ acts on the set of normal subgroups of $M$, and thus $N_G(H)$ acts on the set of normal subgroups of $M$ containing $H$. As $K$ is the smallest normal subgroup of $M$ containing $H$, it follows that $N_G(H)\leq N_G(K)$. By construction, $K\unlhd M$, and so $K$ is normal in $G=MN_G(H)$. This implies that $M=\<H^G\>=K$. Using the definition of $K$ and the fact that $H$ is subnormal in $M$, we can therefore conclude  that $H=M\unlhd G$.
\end{proof}

\begin{lemma}\label{L:ProductNormalSubnormal}
Let $N\unlhd G$ and $H\subn G$ such that $S$ normalizes $H$. Then for every $g\in NH$, there exist $n\in N$ and $h\in H$ with $g=nh$ and $S_g=S_{(n,h)}$. In particular,
\[\F_S(NHS)=\<\F_S(NS),\F_S(HS)\>.\]
\end{lemma}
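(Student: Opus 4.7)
The plan is to mirror the proof of Lemma~\ref{L:ProductNormal}, with the hypothesis that $S$ normalises $H$ replacing the full normality assumption used there. Given $g\in NH$, I would first choose any factorisation $g=n_0h_0$ with $n_0\in N$, $h_0\in H$, and set $Q:=S_g^{n_0}$. One immediately checks $Q\leq S^{n_0}\leq SN$ (since $N\unlhd G$) and $Q\leq S^{h_0^{-1}}\leq SH$ (since $SH$ is a subgroup containing both $S$ and $h_0$), so $Q\leq SN\cap SH$. The target is to produce $x\in N\cap H$ with $Q^x\leq S$; then $n:=n_0x\in N$ and $h:=x^{-1}h_0\in H$ satisfy $g=nh$ and $S_g^n=Q^x\leq S$, forcing $S_g\leq S_{(n,h)}$, while the reverse inclusion comes for free from \eqref{E:SuSf}.

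The key step is the intersection identity $SN\cap SH=S(N\cap H)$. Since $SN/N$ and $SH/H$ are both $p$-groups---the latter because $S$ normalises $H$---we have $O^p(SN)\leq N$ and $O^p(SH)\leq H$, whence $O^p(SN\cap SH)\leq O^p(SN)\cap O^p(SH)\leq N\cap H$; combined with $S$ being Sylow in $SN\cap SH$ (inherited from $S\in\Syl_p(NHS)$), the identity follows. Since $S$ also normalises $N\cap H$, we have $S(N\cap H)=(N\cap H)S$, so Sylow's theorem in $S(N\cap H)$ produces $y\in S(N\cap H)$ with $Q^y\leq S$; writing $y=xs$ with $x\in N\cap H$ and $s\in S$ gives $Q^x=(Q^y)^{s^{-1}}\leq S$, completing the factorisation.

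For the ``in particular'' clause, the inclusion $\<\F_S(NS),\F_S(HS)\>\leq\F_S(NHS)$ is automatic from $NS,HS\leq NHS$. For the converse, I would take a generating morphism $c_g|_P$ of $\F_S(NHS)$, write $g=g_0s_0$ with $g_0\in NH$ and $s_0\in S$ (using $NHS=NH\cdot S$), note that $S_g=S_{g_0}$ because $s_0$ normalises $S$, and then apply the first part to factor $g_0=nh$ with $S_{g_0}=S_{(n,h)}$. This realises $c_g$ on $S_g$ as the composition $c_n\circ c_h\circ c_{s_0}$ of morphisms in $\F_S(NS)$ and $\F_S(HS)$.

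The only substantive step is the intersection identity $SN\cap SH=S(N\cap H)$; the rest amounts to bookkeeping parallel to the proofs of Lemmas~\ref{L:ProductNormal} and~\ref{L:FrattiniGroupFusion}. Notably, subnormality of $H$ in $G$ is never used in this argument---only that $S$ normalises $H$, which is exactly what makes $SH$ a subgroup with $SH/H$ a $p$-group.
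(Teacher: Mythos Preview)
Your argument is correct and is genuinely different from the paper's. The paper proceeds by a minimal-counterexample induction: it introduces $M:=\langle H^G\rangle$, applies Lemma~\ref{L:ProductNormal} to factor $g=n'm$ with $m\in M$ and $S_g=S_{(n',m)}$, and then reduces either by replacing $N$ with $M\cap N$ (when $N\not\leq M$) or by passing from $G$ to $MS$ (when $N\leq M$, using Lemma~\ref{L:GetHunlhdG} to ensure $MS\neq G$). Subnormality of $H$ is genuinely used there, via Lemma~\ref{L:GetHunlhdG}. Your approach instead transplants the proof of Lemma~\ref{L:ProductNormal} verbatim, the point being that the intersection identity $SN\cap SH=S(N\cap H)$ survives once one knows merely that $SH/H$ is a $p$-group---which is exactly what ``$S$ normalises $H$'' gives. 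This is shorter, avoids induction entirely, and as you observe actually proves a stronger statement in which the subnormality hypothesis on $H$ is dropped. The paper's inductive machinery is therefore not needed for this particular lemma, though the normal-closure construction $M=\langle H^G\rangle$ and Lemma~\ref{L:GetHunlhdG} do reappear essentially in the proof of Theorem~\ref{T:Groups}.
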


\begin{proof}
It is sufficient to show that, for every $g\in NH$, there exist $n\in N$ and $h\in H$ with $g=nh$ and $S_g=S_{(n,h)}$.
Let $(G,N,H,g)$ be a counterexample to that assertion such that $|G|+|N|$ is minimal. Set $M:=\<H^G\>$. 

\smallskip

Notice that $g\in NH\subseteq NM$. Hence, by Lemma~\ref{L:ProductNormal}, there exist $n'\in N$ and $m\in M$ such that $g=n'm$ and $S_g=S_{(n',m)}$. In particular, as $(G,N,H,g)$ is a counterexample, it follows
\begin{equation}\label{E:HneqM}
H\neq M.
\end{equation}
Write $g=ab$ with $a\in N$ and $b\in H$. Then $ab=n'm$ and so $mb^{-1}=(n')^{-1}a\in M\cap N$. We obtain thus $m\in (M\cap N)b\subseteq (M\cap N)H$.

\smallskip

Assume now that $N\not\leq M$. Then $M\cap N$ is a proper subgroup of $N$, which is normal in $G$. The minimality of $|G|+|N|$ yields thus that $(G,M\cap N,H,m)$ is not a counterexample. Hence, there exist $y\in M\cap N$ and $h\in H$ with $m=yh$ and $S_m=S_{(y,h)}$. This implies $g=n'm=n'yh=(n'y)h$ and
\[S_g=S_{(n',m)}=S_{(n',y,h)}\leq S_{(n'y,h)}\leq S_{(n'y)h}=S_g,\]
where the second equality uses $S_m=S_{(y,h)}$ and the inclusions use \eqref{E:SuSf}.
So $n:=n'y\in N$ with $g=nh$ and $S_{(n,h)}=S_g$. This contradicts the assumption that $(G,N,H,g)$ is a counterexample. Hence, we have shown that
\[N\leq M\leq MS.\]
By \eqref{E:HneqM} and Lemma~\ref{L:GetHunlhdG}, we have $G\neq MN_G(H)$. In particular, as $S\leq N_G(H)$, it follows
$MS\neq G$. Thus, the minimality of $|G|+|N|$ yields that $(MS,N,H,g)$ is not a counterexample. However, this implies that $(G,N,H,g)$ is not a counterexample, contradicting our assumption.
\end{proof}

\begin{lemma}\label{L:SubnormalSeries}
Let $H\subn G$. Then there exists a subnormal series
\[H=H_0\unlhd H_1\unlhd \cdots \unlhd H_{n-1}\unlhd H_n=G\]
such that $H_{i-1}=\<H^{H_i}\>$ for $i=1,2,\dots,n$. Such a subnormal series is always invariant under conjugation by $N_G(H)$.
\end{lemma}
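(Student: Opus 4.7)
The plan is to build the required series from the top down by iterated normal closure. Starting with a putative top term $H_n := G$, define $H_{i-1} := \langle H^{H_i}\rangle$ for $i = n, n-1, \dots, 1$. By construction $H_{i-1}\unlhd H_i$ and $H\leq H_{i-1}$ (conjugation by the identity shows $H$ is contained in its own normal closure). The required condition $H_{i-1}=\langle H^{H_i}\rangle$ holds by definition. The only thing that needs checking is that this descending sequence reaches $H$ in finitely many steps, so that $H_0=H$ for some suitable $n$.

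To verify termination, I would fix an arbitrary subnormal series witnessing $H\subn G$, say $H=K_0\unlhd K_1\unlhd\cdots\unlhd K_m=G$, set $n:=m$, and show by downward induction that $H_i\leq K_i$. The base case $H_n=G=K_m$ is immediate. For the inductive step, assuming $H_i\leq K_i$, every $x\in H_i$ lies in $K_i$, so $H^x\subseteq K_{i-1}^x=K_{i-1}$ since $K_{i-1}\unlhd K_i$ and $H\leq K_{i-1}$; hence $H_{i-1}=\langle H^{H_i}\rangle\leq K_{i-1}$. Taking $i=0$ gives $H_0\leq K_0=H$, and combined with $H\leq H_0$ this yields $H_0=H$, as required.

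For the invariance statement, I claim that the argument works for any subnormal series $H=H_0\unlhd H_1\unlhd\cdots\unlhd H_n=G$ with $H_{i-1}=\langle H^{H_i}\rangle$ (not just the one constructed above). Fix $x\in N_G(H)$ and prove $H_i^x=H_i$ by downward induction on $i$. For $i=n$ this is trivial since $H_n=G$. For the inductive step, assuming $H_i^x=H_i$, compute
\[
H_{i-1}^x=\langle H^{H_i}\rangle^x=\langle (H^x)^{H_i^x}\rangle=\langle H^{H_i}\rangle=H_{i-1},
\]
where the penultimate equality uses both $H^x=H$ and $H_i^x=H_i$. This completes the argument.

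There is no real obstacle here: the construction is forced by the defining equation $H_{i-1}=\langle H^{H_i}\rangle$ once $H_n$ is set to $G$, and termination and invariance both follow by straightforward downward induction. The only subtlety worth flagging is to remember that the second assertion must be proved for an arbitrary series of the given form rather than just the one produced by the construction, but the same inductive identity handles both.
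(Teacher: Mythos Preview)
Your proof is correct and takes essentially the same approach as the paper. The only minor variation is in the termination argument for the construction: the paper observes that $\langle H^{H_i}\rangle$ is a proper subgroup of $H_i$ whenever $H<H_i$ (using that $H$ is subnormal in $H_i$) and appeals to induction on $|G:H|$, whereas you bound the constructed series term-by-term against a given subnormal series; the invariance argument by downward induction is identical.
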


\begin{proof}
If $H$ is properly contained in a subgroup $H_i$ of $G$, then $\<H^{H_i}\>$ is a proper subgroup of $H_i$, as $H$ is subnormal in $H_i$. It follows thus by induction on $|G:H|$ that we can construct a subnormal series as above.

\smallskip

If $i>1$ and $H_i$ is $N_G(H)$-invariant, then $N_G(H)$ acts via conjugation on the set of normal subgroups of $H_i$ containing $H$, and thus also on $H_{i-1}=\<H^{H_i}\>$, which is the smallest normal subgroup of $H_i$ containing $H$. Thus, by induction on $n-i$, one proves that $H_i$ is $N_G(H)$-invariant for $i=1,2,\dots,n$.
\end{proof}

We are now in a position to give the proof of Theorem~\ref{T:Groups}. In the proof we use the following notation: If $X$ and $Y$ are subgroups of $G$, then $\Aut_X(Y)$ is the group of automorphisms of $Y$ which are induced by conjugation with an element of $N_X(Y)$.

\begin{proof}[Proof of Theorem~\ref{T:Groups}]
Let $(G,H_1,H_2)$ be a counterexample to Theorem~\ref{T:Groups} such that first $|G|$ and then $|G:H_1|+|G:H_2|$ is minimal. Set
\[S_1:=S\cap H_1\mbox{ and }S_2:=S\cap H_2.\]
The minimality of $|G|$ together with Meierfrankenfeld's Lemma~\ref{L:Meierfrankenfeld} yields that
\begin{equation}\label{E:Generation}
G=\<H_1,H_2\>\mbox{ and }S=\<S_1,S_2\>.
\end{equation}
If $H_1\neq H_1^x$ for some $x\in S$, then $H_1<H_1^*:=\<H_1,H_1^x\>\subn G$ by Wielandt's Join Theorem. Hence, it follows from the minimality of $|G:H_1|+|G:H_2|$ that
\[\F_S(G)=\<\;\F_S(\<H_1^*,S\>),\;\F_S(\<H_2,S\>)\;\>.\]
As $\<H_1^*,S\>=\<H_1,S\>$, this contradicts the assumption that $(G,H_1,H_2)$ is a counterexample. So $H_1$ is $S$-invariant. As the situation is symmetric in $H_1$ and $H_2$, we have thus shown that
\begin{equation}\label{E:SinNHi}
 S\leq N_G(H_1)\cap N_G(H_2).
\end{equation}
In particular, we need to show that $\F_S(G)=\<\F_S(H_1S),\F_S(H_2S)\>$ to obtain a contradiction to the assumption that $(G,H_1,H_2)$ is a counterexample. We will use \eqref{E:Generation} and \eqref{E:SinNHi} throughout this proof, most of the time without further reference.

\smallskip

Set
\[N_i:=\<H_i^G\>\mbox{ for }i=1,2.\]
As $(G,H_1,H_2)$ is a counterexample, it follows from Lemma~\ref{L:ProductNormalSubnormal} (applied with $(H_i,H_{3-i})$ in place of $(N,H)$) that $H_i$ is not normal in $G$ for each $i=1,2$. In particular, Lemma~\ref{L:GetHunlhdG} together with \eqref{E:SinNHi} yields
\begin{equation}\label{E:NiSneqG}
N_1S\neq G\neq N_2S.
\end{equation}
We prove now that
\begin{equation}\label{E:H1capH2normal}
 H_1\cap N_2=H_1\cap H_2=H_2\cap N_1\unlhd G.
\end{equation}
For the proof assume first that $H_1\cap N_2\not\leq H_2$. Then
\[H_2<\tilde{H}_2:=\<H_2,H_1\cap N_2\>.
\]
Notice that $H_1\cap N_2$ is subnormal in $G$ and hence, by Wielandt's Join theorem, $\tilde{H}_2$ is subnormal in $G$. The minimality of $|G:H_1|+|G:H_2|$ yields thus
\[\F_S(G)=\<\F_S(H_1S),\F_S(\tilde{H}_2S)\>.\]
Notice that $K_1:=H_1\cap (N_2S)\subn N_2S$ and $K_1\cap S=S_1$. Using \eqref{E:Generation} and a Dedekind argument, observe moreover that
\[K_1=H_1\cap (N_2S)=H_1\cap (N_2S_1)=(H_1\cap N_2)S_1.\]
In particular, we can conclude that $\tilde{H}_2S=\<K_1,H_2\>$ and $K_1S=(H_1\cap N_2)S$. By \eqref{E:NiSneqG} and the minimality of $|G|$, $(N_2S,K_1,H_2)$ is not a counterexample. Hence,
\[\F_S(\tilde{H}_2S)=\F_S(\<K_1,H_2\>)=\<\F_S(K_1S),\F_S(H_2S)\>\]
where $\F_S(K_1S)=\F_S((H_1\cap N_2)S)\subseteq \F_S(H_1S)$. Hence,
\[\F_S(G)=\<\F_S(H_1S),\F_S(\tilde{H}_2S)\>=\<\F_S(H_1S),\F_S(H_2S)\>,\]
which contradicts the assumption that $(G,H_1,H_2)$ is a counterexample. This shows that $H_1\cap N_2\leq H_2$ and so $H_1\cap N_2=H_1\cap H_2$. A symmetric argument gives $H_2\cap N_1=H_1\cap H_2$. Thus,
\[H_1\cap N_2=H_1\cap H_2=H_2\cap N_1\unlhd \<H_1,H_2\>=G\]
and so \eqref{E:H1capH2normal} holds. We argue next that
\begin{equation}\label{E:FactorH1}
 H_1=(H_1\cap H_2)N_{H_1}(S_2).
\end{equation}
For the proof recall that $H_1$ is $S$-invariant by \eqref{E:SinNHi} and $S_2\leq N_2\unlhd G$. Hence \eqref{E:H1capH2normal}  yields
\[[H_1,S_2]\leq H_1\cap N_2=H_1\cap H_2\unlhd G.\]
Thus, $H_1$ normalizes $(H_1\cap H_2)S_2$. In particular, $H_1$ acts on $\Syl_p((H_1\cap H_2)S_2)$, and $H_1\cap H_2$ acts transitively on this set as $S_2\in \Syl_p((H_1\cap H_2)S_2)$. The general Frattini Argument \cite[3.1.4]{KS} implies thus \eqref{E:FactorH1}. As a next step we prove
\begin{equation}\label{E:OupperpH2}
O^{p^\prime}(H_2)=O^{p^\prime}(N_2)\unlhd G.
\end{equation}
For the proof, Lemma~\ref{L:SubnormalSeries} allows us to pick a subnormal series
\[H_2=M_0\unlhd M_1\unlhd\cdots\unlhd M_k=N_2\unlhd M_{k+1}=G\]
for $H_2$ in $G$ with $M_i=\<H_2^{M_{i+1}}\>$ for $i=0,1,2,\dots,k$. Moreover, $M_0,M_1,\dots,M_k$ are $N_G(H_2)$-invariant. In particular, $M_0,M_1,\dots,M_k$ are $S$-invariant by \eqref{E:SinNHi}. Fix now $i\in\{0,1,2,\dots,k\}$ minimal with $O^{p^\prime}(M_i)=O^{p^\prime}(N_2)$. Notice that such $i$ exists as $M_k=N_2$. Assume \eqref{E:OupperpH2} fails. Then $i>0$ and $H_2<M_i$. In particular, the minimality of $|G:H_1|+|G:H_2|$ yields that $(G,H_1,M_i)$ is not a counterexample and thus
\[\F_S(G)=\<\F_S(H_1S),\F_S(M_iS)\>.\]
Note that $S\cap N_2$ is strongly closed in $S$ as $N_2\unlhd G$. So the above equality implies
\[\Aut_G(S\cap N_2)=\<\Aut_{H_1}(S\cap N_2),\Aut_{M_i}(S\cap N_2),\Aut_S(S\cap N_2)\>.\]
Recall that $M_{i-1}$ is normalized by $M_i$ and by $S$. Thus, $M_{i-1}\cap S$ is $\Aut_{M_i}(S\cap N_2)$-invariant and $\Aut_S(S\cap N_2)$-invariant. Moreover, using \eqref{E:H1capH2normal} and that fact that $H_1$ is $S$-invariant, we see that
\[[N_{H_1}(S\cap N_2),S\cap M_{i-1}]\leq [N_{H_1}(S\cap N_2),S\cap N_2]\leq S\cap N_2\cap H_1 =S\cap H_1\cap H_2\leq S_2\leq S\cap M_{i-1}.
\]
This implies that $S\cap M_{i-1}$ is $\Aut_{H_1}(S\cap N_2)$-invariant. Altogether, we have seen that $S\cap M_{i-1}$ is $\Aut_G(S\cap N_2)$-invariant and thus
\[N_G(S\cap N_2)\leq N_G(S\cap M_{i-1}).\]
As $O^{p^\prime}(N_2)$ is normal in $G$ and by assumption contained in $M_i$, a Frattini Argument yields now
\[G=O^{p^\prime}(N_2)N_G(S\cap N_2)=M_iN_G(S\cap M_{i-1}).\]
Hence, by a Dedekind Argument, setting $X:=N_{M_{i+1}}(S\cap M_{i-1})$, we have  $M_{i+1}=M_iX$. As $H_2\leq M_{i-1}\unlhd M_i\unlhd M_{i+1}$, it follows that
\begin{eqnarray*}
M_i&=&\<H_2^{M_{i+1}}\>=\<M_{i-1}^{M_{i+1}}\>=\<M_{i-1}^X\>\\
&=& \prod_{x\in X}M_{i-1}^x.
\end{eqnarray*}
Notice that, for every $x\in X$, $M_{i-1}^x$ is a normal subgroup of $M_i$ with $M_{i-1}^x\cap S=(M_{i-1}\cap S)^x=M_{i-1}\cap S$. Hence, using Lemma~\ref{L:Meierfrankenfeld} (or an elementary argument for this special case), one sees that
\[M_i\cap S=\prod_{x\in X}(M_{i-1}^x\cap S)=M_{i-1}\cap S.\]
As $M_{i-1}\unlhd M_i$, it follows that $O^{p^\prime}(M_{i-1})=O^{p^\prime}(M_i)=O^{p^\prime}(N_2)$, contradicting the minimality of $i$. Thus, \eqref{E:OupperpH2} holds. We show next that
\begin{equation}\label{E:S2Normal}
 S_2\unlhd G.
\end{equation}
For the proof we use that, by \eqref{E:H1capH2normal} and \eqref{E:OupperpH2}, $H_1\cap H_2$ and $O^{p^\prime}(H_2)$ are normal in $G$. In particular,
\[\hat{H}_2:=(H_1\cap H_2)O^{p^\prime}(H_2)\unlhd G.\]
A Frattini Argument yields $H_2=O^{p^\prime}(H_2)N_{H_2}(S_2)$ and \eqref{E:FactorH1} gives $H_1=(H_1\cap H_2)N_{H_1}(S_2)$. Hence,
\[G=\<H_1,H_2\>=\hat{H}_2\<N_{H_1}(S_2),N_{H_2}(S_2)\>.\]
Notice that $\hat{H}_2\leq H_2$ and so $N_{\hat{H}_2}(S_2)\leq N_{H_2}(S_2)$. Hence, a Dedekind Argument gives that
\[N_G(S_2)=N_{\hat{H}_2}(S_2)\<N_{H_1}(S_2),N_{H_2}(S_2)\>=\<N_{H_1}(S_2),N_{H_2}(S_2)\>.\]
Observe that $S_2\unlhd S$ by \eqref{E:SinNHi}. Assume now that $S_2$ is not normal in $G$. Then the minimality of $|G|$ yields that $(N_G(S_2),N_{H_1}(S_2),N_{H_2}(S_2))$ is not a counterexample, i.e.
\[\F_S(N_G(S_2))=\<\;\F_S(N_{H_1}(S_2)S),\;\F_S(N_{H_2}(S_2)S)\;\>.\]
As $O^{p^\prime}(H_2)$ is normal in $G$, Lemma~\ref{L:FrattiniGroupFusion} yields thus that
\[\F_S(G)=\<\F_S(O^{p^\prime}(H_2)S),\F_S(N_G(S_2))\>=\<\F_S(H_1S),\F_S(H_2S)\>,\]
which contradicts the assumption that $(G,H_1,H_2)$ is a counterexample. Hence, \eqref{E:S2Normal} holds.

\smallskip

The fact that $S_2$ is normal in $G$ implies by Wielandt's Join Theorem in particular that $H_1S_2$ is subnormal in $G$. If $S_2\not\leq H_1$, then the minimality of $|G:H_1|+|G:H_2|$ yields that $(G,H_1S_2,H_2)$ is not a counterexample. As $(H_1S_2)S=H_1S$, this would imply that $(G,H_1,H_2)$ is not a counterexample, a contradiction. Hence, $S_2\leq H_1\cap S=S_1$. A symmetric argument yields $S_1\leq S_2$ and thus
\[S=S_1=S_2\unlhd G.\]
It follows now easily that $(G,H_1,H_2)$ is not a counterexample contradicting our assumption.
\end{proof}

\section{Some background on partial groups and localities}\label{S:PartialLocalities}

\subsection{Partial groups and localities}

We will start by summarizing some basic background on partial groups and localities here, but the reader is referred to Chermak's original papers \cite{Chermak:2013,Chermak:2015} or to the summary in  \cite[Section~3]{Chermak/Henke} for a detailed introduction to the required definitions and results concerning partial groups and localities.

\smallskip

Following Chermak's notation, we write $\W(\L)$ for the set of words in a set $\L$, $\emptyset$ for the empty word, and $v_1\circ v_2\circ\cdots\circ v_n$ for the concatenation of words $v_1,\dots,v_n\in\W(\L)$. Recall that a partial group consists of a set $\L$, a ``product'' $\Pi\colon \D\rightarrow \L$ defined on $\D\subseteq\W(\L)$, and an involutory bijection  $\L\rightarrow \L,f\mapsto f^{-1}$ called an ``inversion map'', subject to certain group-like axioms (cf. \cite[Definition~2.1]{Chermak:2013} or \cite[Definition~1.1]{Chermak:2015}). If $\L$ is a partial group with a product $\Pi\colon \D\rightarrow \L$ then, given $(x_1,\dots,x_n)\in\D$, we write also $x_1x_2\cdots x_n$ for $\Pi(x_1,\dots,x_n)$.  We will moreover use the following definitions.
\begin{itemize}
 \item A subset $\H\subseteq\L$ is called a \emph{partial subgroup} of $\L$ if $\Pi(w)\in\H$ for every $w\in\W(\H)\cap \D$ and $h^{-1}\in\H$ for every $h\in\H$.
\item A partial subgroup $\H$ of $\L$ is called a \emph{subgroup} of $\L$ if $\W(\H)\subseteq\D$. Observe that every subgroup of $\L$ forms an actual group. We call a subgroup $\H$ of $\L$ a \emph{$p$-subgroup} if it is a $p$-group.
\item Given $f\in\L$, we write $\D(f):=\{x\in\L\colon (f^{-1},x,f)\in\D\}$ for the set of elements $x\in\L$ for which the \emph{conjugate} $x^f:=\Pi(f^{-1},x,f)$ is defined. This gives us a conjugation map $c_f\colon\D(f)\rightarrow \L$ defined by $x\mapsto x^f$.
\item Let $S$ be a $p$-subgroup of $\L$. Then set
\[S_f:=\{x\in S\colon x\in\D(f),\;x^f\in S\}\mbox{ for all }f\in\L.\]
More generally, if $w=(f_1,\dots,f_n)\in\W(\L)$, then write $S_w$ for the subset of $S$ consisting of all elements $s\in S$ such that there exists a series $s=s_0,s_1,\dots,s_n\in S$ with $s_{i-1}^{f_i}=s_i$ for $i=1,2,\dots,n$.
\item We say that a partial subgroup $\N$ of $\L$ is a \emph{partial normal subgroup} of $\L$ (and write $\N\unlhd\L$) if $x^f\in\N$ for every $f\in\L$ and every $x\in \D(f)\cap \N$.
\item We call a partial subgroup $\H$ of $\L$ a \emph{partial subnormal subgroup} of $\L$ (and write $\H\subn\L$) if there is a sequence $\H_0,\H_1,\dots,\H_k$ of partial subgroups of $\L$ such that
\[\H=\H_0\unlhd\H_1\unlhd\cdots\unlhd \H_{k-1}\unlhd \H_k=\L.\]
\item If $f\in\L$ and $\X\subseteq \D(f)$ set $\X^f:=\{x^f\colon x\in\X\}$. For every $\X\subseteq\L$, call
\[N_\L(\X):=\{f\in\L\colon \X\subseteq\D(f)\mbox{ and }\X^f=\X\}\]
the \emph{normalizer} of $\X$ in $\L$.
\item For $\X\subseteq \L$, we call
\[C_\L(\X):=\{f\in\L\colon \X\subseteq\D(f),\;x^f=x\mbox{ for all }x\in\X\}\]
the \emph{centralizer} of $\X$ in $\L$.
\item For any two subsets $\X,\Y\subseteq\L$, their \emph{product} can be naturally defined by
\[\X\Y:=\{\Pi(x,y)\colon x\in\X,\;y\in\Y,\;(x,y)\in\D\}.\]
\end{itemize}
A \emph{locality} is a triple $(\L,\Delta,S)$ such that $\L$ is a partial group, $S$ is maximal among the $p$-subgroups of $\L$, and $\Delta$ is a set of subgroups of $S$ subject to certain axioms, which imply in particular that $N_\L(P)$ is a subgroup of $\L$ for every $P\in\Delta$. As part of the axioms, a word $w=(f_1,\dots,f_n)$ is an element of $\D$ if and only if there exist $P_0,P_1,\dots,P_n\in\Delta$ such that
\[P_{i-1}\subseteq\D(f_i)\mbox{ and }P_{i-1}^{f_i}=P_i\mbox{ for } i=1,2,\dots,n.\]
If the above holds, then we say also that $w\in\D$ via $P_0,P_1,\dots,P_n$ or that $w\in\D$ via $P_0$.

\smallskip

If $(\L,\Delta,S)$ is a locality, then \cite[Corollary~2.6]{Chermak:2015} gives that, for every word $w\in\W(\L)$, the subset $S_w$ is a subgroup of $S$ and
\begin{equation}\label{E:Sw}
 S_w\in\Delta\mbox{ if and only if }w\in\D.
\end{equation}
In particular, $S_f$ is a subgroup of $S$ with $S_f\in\Delta$, for every $f\in\L$. We will also frequently use the following property, which follows from  \cite[Lemma~2.3(c)]{Chermak:2015}:
\begin{equation}\label{E:2}
 S_w\leq S_{\Pi(w)}\mbox{ and }(\cdots(S_w^{f_1})^{f_2}\cdots)^{f_n}=S_w^{\Pi(w)}\mbox{ for every }w=(f_1,\dots,f_n)\in\D.
\end{equation}

\smallskip

Fix now a locality $(\L,\Delta,S)$. For $f\in\L$, the conjugation map $c_f\colon S_f\rightarrow S,x\mapsto x^f$ is by \cite[Proposition~2.5(b)]{Chermak:2015} an injective group homomorphism. For every partial subgroup $\H$ of $\L$, we write $\F_{S\cap\H}(\H)$ for the fusion system over $S\cap \H$ which is generated by the conjugation maps $c_h\colon S_h\cap \H\rightarrow S\cap \H,x\mapsto x^h$ with $h\in\H$. In particular, $\F_S(\L)$ is the fusion system over $S$ which is generated by the conjugation maps $c_f\colon S_f\rightarrow S$. We say that $(\L,\Delta,S)$ is a locality \emph{over} $\F$ if $\F=\F_S(\L)$.

\begin{lemma}\label{L:SplitFSL}
Let $(\L,\Delta,S)$ be a locality and $\N\unlhd\L$. Set $T:=S\cap\N$. Then the following hold:
\begin{itemize}
 \item [(a)] For every $g\in\L$, there exist $n\in\N$ and $h\in N_\L(T)$ such that $(n,h)\in\D$, $g=nh$ and $S_g=S_{(n,h)}$.
 \item [(b)] $\F_S(\L)=\<\F_S(\N S),\F_S(N_\L(T))\>$.
\end{itemize}
\end{lemma}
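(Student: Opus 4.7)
The plan is to obtain part (a) as a direct application of Stellmacher's Splitting Lemma \cite[Lemma~3.12]{Chermak:2015} to the partial normal subgroup $\N\unlhd\L$ and the Sylow $p$-subgroup $T$ of $\N$. That lemma states exactly that every $g\in\L$ decomposes as $g=nh$ with $n\in\N$, $h\in N_\L(T)$, $(n,h)\in\D$ and $S_g=S_{(n,h)}$. Alternatively, one could imitate the elementary proof we gave for Lemma~\ref{L:FrattiniGroupFusion}: first produce a rough factorisation $g=m h_0$ with $m\in\N$ and $h_0\in N_\L(T)$ by a Frattini-type argument using that $T$ and $T^{h_0}$ are both Sylow in an appropriate overgroup inside $\N$, and then correct $m$ by an element of $N_\N(T)$ via a Sylow-conjugacy argument so as to force $S_g=S_{(n,h)}$. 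Either way, one relies on the locality axioms which guarantee that $N_\L(P)$ is an honest subgroup of $\L$ for every $P\in\Delta$.

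For part (b), I would deduce the fusion system equality directly from part (a). By definition $\F_S(\L)$ is generated by the conjugation maps $c_f\co S_f\to S$ as $f$ ranges over $\L$, so it is enough to check that every such $c_f$ lies in $\<\F_S(\N S),\F_S(N_\L(T))\>$. Given $f\in\L$, write $f=nh$ as in part (a). Because $(n,h)\in\D$ and $S_f=S_{(n,h)}$, property \eqref{E:2} implies that $c_f$ equals the composition of $c_n$ restricted to $S_f$ followed by $c_h$ restricted to $S_f^n$. The first factor is a morphism in $\F_S(\N S)$ since $n\in\N\subseteq\N S$, while the second is a morphism in $\F_S(N_\L(T))$ since $h\in N_\L(T)$. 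The reverse inclusion is automatic, as $\N S$ and $N_\L(T)$ are partial subgroups of $\L$ and so each conjugation map arising from them already belongs to $\F_S(\L)$.

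The genuine content therefore sits in part (a). The main obstacle, if one wished to reprove (a) from scratch rather than quote Stellmacher's lemma, is to execute the Frattini/Sylow manoeuvre inside a partial group while simultaneously tracking the domain subgroup $S_g$, because the partial product is only defined on words $w$ with $S_w\in\Delta$. Controlling this domain condition during the adjustment step, together with the fact that one cannot freely multiply elements of $\L$, is precisely what makes the locality version of the splitting more delicate than the corresponding group-theoretic statement in Lemma~\ref{L:FrattiniGroupFusion}.
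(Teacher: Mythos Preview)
Your approach is essentially the same as the paper's, and your argument for part (b) matches the paper's proof almost verbatim. There is, however, a small misattribution in part (a): Stellmacher's Splitting Lemma \cite[Lemma~3.12]{Chermak:2015} does not by itself produce a decomposition with $h\in N_\L(T)$. As used in Chermak's paper, the Splitting Lemma only asserts that if $g=nh$ with $h$ being $\uparrow$-maximal with respect to $\N$ (in the sense of \cite[Definition~3.6]{Chermak:2015}), then $S_g=S_{(n,h)}$. The paper therefore combines several ingredients: the Frattini Lemma \cite[Corollary~3.11]{Chermak:2015} to obtain $n\in\N$ and $h\in\L$ with $(n,h)\in\D$, $g=nh$, and $h$ $\uparrow$-maximal; the Splitting Lemma to conclude $S_g=S_{(n,h)}$; and finally \cite[Proposition~3.9]{Chermak:2015} together with \cite[Lemma~3.1(a)]{Chermak:2015} to deduce that an $\uparrow$-maximal element $h$ actually lies in $N_\L(T)$. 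Your plan is correct once you invoke this full chain of results rather than attributing the entire statement to Lemma~3.12 alone.
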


\begin{proof}
\textbf{(a)} Let $g\in\L$. By the Frattini Lemma \cite[Corollary~3.11]{Chermak:2015}, there exist $n\in\N$ and $h\in\L$ such that $(n,h)\in\D$ and $h$ is $\uparrow$-maximal with respect to $\N$ in the sense of \cite[Definition~3.6]{Chermak:2015}. Then $S_g=S_{(n,h)}$ by the Splitting Lemma \cite[Lemma~3.12]{Chermak:2015}. Using first \cite[Proposition~3.9]{Chermak:2015} and then \cite[Lemma~3.1(a)]{Chermak:2015}, it follows that $h\in N_\L(T)$.

\smallskip

\textbf{(b)} As $\F_S(\L)$ is generated by maps of the form $c_g\colon S_g\rightarrow S$, it is sufficient to prove that such a map is in $\<\F_S(\N S),\F_S(N_\L(T))\>$. Fixing $g\in \L$, it follows from (a) that there exist $n\in \N$ and $h\in N_\L(T)$ such that $(n,h)\in\D$, $g=nh$ and $S_g=S_{(n,h)}$. So $c_g\colon  S_g\rightarrow S$ can be written as a composite of restrictions of the conjugation map $c_n\colon S_n\rightarrow S$ (which is a morphism in $\F_S(\N S)$) and of $c_h\colon S_h\rightarrow S$ (which is a morphism in $\F_S(N_\L(T))$). This implies (b).
\end{proof}

\subsection{Linking localities and regular localities}

A finite group $G$ is said to be of \emph{characteristic $p$}, if $C_G(O_p(G))\leq O_p(G)$, where $O_p(G)$ denotes the largest normal $p$-subgroup of $G$. A locality $(\L,\Delta,S)$ is called a \emph{linking locality}, if $\F_S(\L)$ is saturated, $\F_S(\L)^{cr}\subseteq\Delta$ and $N_\L(P)$ is a group of characteristic $p$ for every $P\in\Delta$.

\smallskip

For every fusion system $\F$ over $S$, there is    the set $\F^s$ of \emph{$\F$-subcentric} subgroups of $S$ defined in \cite[Definition~1]{Henke:2015}. It is shown in \cite[Theorem~A]{Henke:2015} that, for every saturated fusion system $\F$, there exists a linking locality $(\L,\Delta,S)$ over $\F$ with $\Delta=\F^s$. We call such a linking locality a \emph{subcentric locality} over $\F$.

\smallskip

Regular localities were first introduced by Chermak \cite{ChermakIII}, but we will refer to the treatment of the subject in \cite{Henke:Regular}. Building on Chermak's work, we introduced in \cite[Definition~9.17]{Henke:Regular} a certain partial normal subgroup $F^*(\L)$ of $\L$, for every linking locality $(\L,\Delta,S)$. Then for any saturated fusion system $\F$ over $S$ a set $\delta(\F)$ of subgroups of $S$ was introduced such that, for any linking locality $(\L,\Delta,S)$ over $\F$, we have
\begin{equation}\label{E:deltaF}
\delta(\F)=\{P\leq S\colon P\cap F^*(\L)\in\F^s\}.
\end{equation}
Indeed, the set $\delta(\F)$ is defined in \cite[Definition~10.1]{Henke:Regular} by the equation \eqref{E:deltaF} if $(\L,\Delta,S)$ is a fixed subcentric linking locality over $\F$. Then it is shown in \cite[Lemma~10.2]{Henke:Regular} that the set $\delta(\F)$ depends only on $\F$ and not on the choice of the subcentric locality $(\L,\Delta,S)$, and that \eqref{E:deltaF} holds actually for every linking locality $(\L,\Delta,S)$.

\smallskip

A linking locality $(\L,\Delta,S)$ is called a \emph{regular locality}, if $\Delta=\delta(\F)$. For every saturated fusion system $\F$, there exists a regular locality over $\F$ (cf. \cite[Lemma~10.4]{Henke:Regular}). Note that \eqref{E:deltaF} holds in particular if $(\L,\Delta,S)$ is a regular locality over $\F$. In that case, we have $\Delta=\delta(\F)$, so \eqref{E:deltaF} yields that $P\in\Delta$ if and only if $P\cap F^*(\L)\in\Delta$. Thus, if $(\L,\Delta,S)$ is a regular locality and $T^*:=F^*(\L)\cap S$, then \eqref{E:Sw} yields
\begin{equation}\label{E:SwcapTstar}
 w\in\D\mbox{ if and only if }S_w\cap T^*\in\Delta.
\end{equation}
In particular, $S_f\cap T^*\in\Delta$ for every $f\in\L$. The next theorem states one of the most important properties of regular localities.

\begin{theorem}[{\cite[Corollary~7.9]{ChermakIII}, \cite[Corollary~10.19]{Henke:Regular}}]\label{T:RegularSubnormal}
Let $(\L,\Delta,S)$ be a regular locality and $\H\subn\L$. Then $\F_{S\cap\H}(\H)$ is saturated and $(\H,\delta(\F_{S\cap\H}(\H)),S\cap\H)$ is a regular locality.
\end{theorem}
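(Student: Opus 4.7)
The plan is to proceed by induction on the length $k$ of a subnormal series
\[\H=\H_0\unlhd\H_1\unlhd\cdots\unlhd\H_k=\L.\]
The case $k=0$ is trivial, and the inductive step reduces the theorem to the situation where $\H$ is a partial normal subgroup of a regular locality. So set $\N:=\H$ with $\N\unlhd\L$ and $T:=S\cap\N$.

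First I would equip $\N$ with the structure of a locality. The fact that $T$ is maximal among the $p$-subgroups of $\N$ should follow from $T\unlhd S$ (a consequence of partial normality of $\N$) together with a standard Sylow-style argument inside $\L$. The natural candidate object set is
\[\Delta_\N:=\{P\leq T\colon PT^*\in\Delta\},\]
where $T^*:=F^*(\L)\cap S$; by \eqref{E:SwcapTstar} this is equivalent to requiring that the admissible words into $P$ already lie in the domain $\D$ of $\L$. The locality axioms for $(\N,\Delta_\N,T)$ are then inherited from those for $(\L,\Delta,S)$, the only nontrivial point being overgroup-closure of $\Delta_\N$ in $T$, which follows from the corresponding property of $\Delta$.

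The most delicate step is to show that $\F_T(\N)$ is saturated. Here I would follow Chermak's approach in \cite{ChermakIII}, transferring saturation from $\F_S(\L)$ to $\F_T(\N)$ via the Frattini and Splitting Lemmas. Concretely, given $P\leq T$ and $\varphi\in\Hom_{\F_T(\N)}(P,T)$ whose image is fully normalized in $\F_T(\N)$, I would first extend $\varphi$ to a morphism in $\F_S(\L)$, then apply the Splitting Lemma~\ref{L:SplitFSL} to factor a conjugating element as a product $nh$ with $n\in\N$ and $h\in N_\L(T)$, and finally use the characteristic-$p$ hypothesis on $N_\L(Q)$ for a suitable overgroup $Q\in\Delta$ of $P$ to absorb the $h$-contribution into $\N$ and extract the required saturation data from inside $\N$.

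Finally, to verify the linking-locality and regularity conditions for $(\N,\Delta_\N,T)$, I would observe that for $P\in\Delta_\N$ the subgroup $N_\N(P)=N_\L(P)\cap\N$ is subnormal in the characteristic-$p$ group $N_\L(P)$ and hence is itself of characteristic $p$. Regularity then follows by identifying $F^*(\N)$ using the machinery of \cite[Section~9]{Henke:Regular} and matching $\Delta_\N$ with $\delta(\F_T(\N))$ via \eqref{E:deltaF}. The principal obstacle is the saturation step: all the heavy lifting is in orchestrating the Frattini and Splitting Lemmas together with the characteristic-$p$ hypothesis on normalizers, while the remaining verifications are essentially bookkeeping once saturation is in hand.
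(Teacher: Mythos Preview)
The paper does not prove this theorem at all: it is stated with attribution to \cite[Corollary~7.9]{ChermakIII} and \cite[Corollary~10.19]{Henke:Regular} and used as a black box throughout. There is therefore no proof in the paper to compare your proposal against.

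That said, your sketch has a concrete problem worth flagging. Your proposed object set $\Delta_\N=\{P\leq T\colon PT^*\in\Delta\}$ is not the right one: since $(\L,\Delta,S)$ is regular and $T^*\in\Delta$, the condition $PT^*\in\Delta$ holds for \emph{every} subgroup $P\leq T$ (because $PT^*\cap T^*=T^*\in\Delta$, and \eqref{E:deltaF} says membership in $\Delta$ is detected by intersection with $T^*$). So your $\Delta_\N$ collapses to the set of all subgroups of $T$, which cannot be the object set of a linking locality unless $\N$ is a $p$-group. The genuine object set is $\delta(\F_T(\N))$, which is governed by $F^*(\N)$ rather than $F^*(\L)$; identifying it requires first developing $F^*(\N)$ intrinsically, and this is exactly the content buried in the cited references. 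Relatedly, your argument that $N_\N(P)$ has characteristic $p$ because it is subnormal in $N_\L(P)$ presupposes $P\in\Delta$, which need not hold for $P\in\delta(\F_T(\N))$. The inductive reduction to the normal case and the general shape of the saturation argument via Frattini/Splitting are in the right spirit, but the object-set issue is a real gap, not bookkeeping.
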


The theorem above leads to a natural definition of components of regular localities (cf. \cite[Definition~7.9, Definition~11.1]{Henke:Regular}). Let $(\L,\Delta,S)$ be a regular locality. We will write $\Comp(\L)$ for the set of components of $\L$. If $\K_1,\dots,\K_r\in\Comp(\L)$, then the product $\prod_{i=1}^r\K_i$ does not depend on the order of the factors and is a partial normal subgroup of $F^*(\L)$ (cf. \cite[Proposition~11.7]{Henke:Regular}). In particular, for $\fC\subseteq\Comp(\L)$, the product $\prod_{\K\in\fC}\K$ is well-defined.

\smallskip

The product of all components of $\L$ is denoted by $E(\L)$ and turns out to be a partial normal subgroup of $\L$ (cf. \cite[Lemma~11.13]{Henke:Regular}). We have moreover  $F^*(\L)=E(\L)O_p(\L)$ (cf. \cite[Lemma~11.9]{Henke:Regular}). Note that  Theorem~\ref{T:RegularSubnormal} makes it possible to consider $\Comp(\H)$ and $E(\H)$ for every partial subnormal subgroup $\H$ of $\L$. By \cite[Remark~11.2]{Henke:Regular}, $\Comp(\H)\subseteq\Comp(\L)$. In particular, $E(\H)\subseteq E(\L)\subseteq F^*(\L)$.

\subsection{Some further properties of regular localities} We state now some slightly more specialized results on regular localities which will be needed in the proof of Theorem~\ref{T:RegularLocalities}.

\smallskip

\textbf{Throughout this subsection let $(\L,\Delta,S)$ be a regular locality and $T^*:=S\cap F^*(\L)$.}

\smallskip

By \cite[Corollary~10.5]{Henke:Regular}, we have $T^*\in\delta(\F)=\Delta$. In particular, $N_\L(T^*)$ is a group of characteristic $p$.

\begin{lemma}\label{L:1}
For every $f\in N_\L(T^*)$ and every $g\in\L$, the words $(f,g)$, $(g,f)$, $(f,f^{-1},g,f)$ are in $\D$ and $gf=fg^f$. Moreover,
\[S_{(f,g)}\cap T^*=S_{fg}\cap T^*\mbox{ and }S_{(g,f)}\cap T^*=S_{gf}\cap T^*.\]
\end{lemma}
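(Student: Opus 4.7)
The main tool will be \eqref{E:SwcapTstar}: for any word $w\in\W(\L)$, $w\in\D$ if and only if $S_w\cap T^*\in\Delta$. The key structural inputs I will repeatedly invoke are: (i) since $T^*\in\Delta$, the normalizer $N_\L(T^*)$ is a group of characteristic $p$, so $c_f$ restricts to an automorphism of $T^*$ and $T^*\subseteq\D(f)\cap\D(f^{-1})$; (ii) the subgroup $T^* = S\cap F^*(\L)$ is strongly closed in $\F := \F_S(\L)$; and (iii) $\Delta = \delta(\F)$ is closed under $\F$-conjugacy, since any $\F$-iso $\phi\colon P\to Q$ restricts by (ii) to an $\F$-iso $P\cap T^*\to Q\cap T^*$, and $\delta(\F)$-membership of $P$ is detected via $\F^s$-membership of $P\cap T^*$. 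In particular $S_g\cap T^*\in\Delta$ for every $g\in\L$.

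For the $\D$-memberships of $(f,g)$, $(g,f)$ and $(f,f^{-1},g,f)$, I compute $T^*$-intersections directly. Using (i)-(ii), short calculations give
\[
S_{(f,g)}\cap T^* = (S_g\cap T^*)^{f^{-1}},\qquad S_{(g,f)}\cap T^* = S_g\cap T^* = S_{(f,f^{-1},g,f)}\cap T^*.
\]
The first is the $c_{f^{-1}}$-image of $S_g\cap T^*\in\Delta$, and $c_{f^{-1}}$ is an automorphism of $T^*$, so it lies in $\Delta$ by (iii); the others are already in $\Delta$. So each word is in $\D$ by \eqref{E:SwcapTstar}.

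The identity $gf=fg^f$ then follows by applying the associativity axiom to $(f,f^{-1},g,f)\in\D$ in two ways: contracting the prefix $(f,f^{-1})$ to $\One$ yields $\Pi(\One,g,f)=gf$, while contracting the middle $(f^{-1},g,f)$ to $g^f$ yields $\Pi(f,g^f)=fg^f$. For the $S$-equalities, the $\le$ inclusions come from \eqref{E:2}. For the reverse $S_{fg}\cap T^*\subseteq S_{(f,g)}\cap T^*$, I take $s\in S_{fg}\cap T^*$ and aim to show the refined word $(g^{-1},f^{-1},s,f,g)$ lies in $\D$; granted this, applying the associativity axiom two ways — contracting $(f^{-1},s,f)$ to $s^f$, and contracting the outer pairs $(g^{-1},f^{-1})$ and $(f,g)$ to $(fg)^{-1}$ and $fg$ — forces $(s^f)^g = s^{fg}\in S$, hence $s^f\in S_g$ and $s\in S_{(f,g)}$. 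The required $\D$-membership of the 5-letter word I produce by exhibiting a $\Delta$-path through $S_g\cap T^*$ at the central position, exploiting (i)-(iii); the argument for $S_{gf}\cap T^*\subseteq S_{(g,f)}\cap T^*$ is symmetric. The main obstacle is exactly this expansion step: turning the given $\D$-membership of $((fg)^{-1},s,fg)$ into that of the longer word $(g^{-1},f^{-1},s,f,g)$, which requires carefully invoking the locality axiom together with the fact that $f$ normalizes $T^*$ to produce compatible $\Delta$-paths.
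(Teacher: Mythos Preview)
Your treatment of the $\D$-memberships and of the identity $gf=fg^f$ is correct and essentially the same as the paper's: both compute via $S_g\cap T^*$ and apply associativity to $(f,f^{-1},g,f)$.

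The gap is in the reverse inclusions. Your plan is to take $s\in S_{fg}\cap T^*$ and show $(g^{-1},f^{-1},s,f,g)\in\D$, then deduce $(s^f)^g=s^{fg}\in S$. But producing a $\Delta$-path for this five-letter word is precisely the problem you are trying to solve. If you anchor the path at $S_g\cap T^*$ (at either the position after the initial $g^{-1}$ or before the final $g$) and push through, the last step requires $(S_g\cap T^*)^{s^f}\le S_g$; in other words you need to know how $s^f$ interacts with $S_g$, which is exactly the content of $s^f\in S_g$. Anchoring instead at $S_{fg}\cap T^*$ and trying to expand $(fg)$ to $(f,g)$ again needs $S_{fg}\cap T^*\le S_{(f,g)}$, which is circular. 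So the ``compatible $\Delta$-paths'' you allude to cannot be manufactured from (i)--(iii) alone without already assuming the conclusion.

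The paper's argument avoids this by inserting a cancelling pair $(f,f^{-1})$ adjacent to the \emph{product} rather than trying to refine the word directly. For the $(f,g)$ case one first checks $(f^{-1},f,g)\in\D$ via $S_g\cap T^*$, whence $g=f^{-1}(fg)$; then one observes the chain
\[
S_{fg}\cap T^*\ \le\ S_{(f,\,f^{-1},\,fg)}\cap T^*\ \le\ S_{(f,\,f^{-1}(fg))}\cap T^*\ =\ S_{(f,g)}\cap T^*.
\]
The first inclusion holds because $f\in N_\L(T^*)$ lets you freely prepend $(f,f^{-1})$ on $T^*$; the second is a contraction via \eqref{E:2}. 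The $(g,f)$ case is handled symmetrically using $(gf,f^{-1},f)$ and $(gf)f^{-1}=g$. This sidesteps entirely the need to produce a $\Delta$-path involving both $g^{-1}$ and $g$ simultaneously.
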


\begin{proof}
It is a special case of \eqref{E:SwcapTstar} that $S_g\cap T^*\in\Delta$. Hence, as $T^*$ is strongly closed, $(g,f)$, $u:=(f,f^{-1},g,f)$ and $(g,f,f^{-1})$ are in $\D$ via $S_g\cap T^*$. In particular, by the axioms of a partial group, $gf=\Pi(u)=fg^f$ and $(gf)f^{-1}=\Pi(g,f,f^{-1})=g$. So
\[S_{(g,f)}\cap T^*\leq S_{gf}\cap T^*\leq S_{(gf,f^{-1},f)}\cap T^*\leq S_{((gf)f^{-1},f)}\cap T^*=S_{(g,f)}\cap T^*,\]
where the first and the third inclusion use \eqref{E:2} and the second inclusion uses $f\in N_\L(T^*)$. Thus, $S_{(g,f)}\cap T^*=S_{gf}\cap T^*$. Observe also that $(f^{-1},f,g)\in\D$ via $S_g\cap T^*$. In particular, $(f,g)\in\D$ and $g=\Pi(f^{-1},f,g)=f^{-1}(fg)$. It follows that
\[S_{(f,g)}\cap T^*\leq S_{fg}\cap T^*\leq S_{(f,f^{-1},fg)}\cap T^*\leq S_{(f,f^{-1}(fg))}\cap T^*= S_{(f,g)}\cap T^*,\]
where again the first and the third inclusion use \eqref{E:2} and the second inclusion uses $f\in N_\L(T^*)$. Hence, $S_{(f,g)}\cap T^*=S_{fg}\cap T^*$.
\end{proof}

\begin{lemma}\label{L:NLT*}
We have $N_\L(T^*)=N_\L(S\cap E(\L))$.
\end{lemma}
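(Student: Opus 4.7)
The plan is to reduce the statement to the factorisation $T^* = T\cdot R$, where I set
\[
T := S \cap E(\L) \quad\text{and}\quad R := O_p(\L),
\]
and then derive each of the two containments $N_\L(T^*) \subseteq N_\L(T)$ and $N_\L(T) \subseteq N_\L(T^*)$ from this factorisation. The starting point is \cite[Lemma~11.9]{Henke:Regular}, which gives $F^*(\L) = E(\L)R$, so that $T^* = S \cap E(\L)R$. Also recall (from just above this lemma) that $T^* \in \Delta$, and that $R$ is a normal $p$-subgroup of $\L$.

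To prove $T^* = TR$, the inclusion $TR \leq T^*$ is immediate since $T^*$ is a subgroup of $S$ containing both $T$ and $R$. For the reverse, any $x \in T^*$ can be written as $x = \Pi(e,r)$ with $(e,r) \in \D$, $e \in E(\L)$ and $r \in R \leq S$. Since $x$ and $r^{-1}$ both lie in $S$, the word $(x,r^{-1})$ is in $\D$ via $S$, and the partial-group axioms applied to $(e,r,r^{-1}) \in \D$ together with $\Pi(r,r^{-1}) = \One$ yield
\[
e \;=\; \Pi(e,\Pi(r,r^{-1})) \;=\; \Pi(\Pi(e,r),r^{-1}) \;=\; \Pi(x,r^{-1}).
\]
Because $S$ is a subgroup, $\Pi(x,r^{-1}) \in S$, so $e \in S \cap E(\L) = T$ and $x = er \in TR$.

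For $N_\L(T^*) \subseteq N_\L(T)$, I fix $f \in N_\L(T^*)$. Then $T \leq T^* \leq S_f$, and since $E(\L) \unlhd \L$, the homomorphism $c_f$ sends $E(\L) \cap S_f$ into $E(\L)$. Using $c_f(T^*) = T^*$, this gives $c_f(T) \subseteq T^* \cap E(\L)$. Because $E(\L) \leq F^*(\L)$, we have $T^* \cap E(\L) = S \cap E(\L) = T$, so $c_f(T) \subseteq T$, and injectivity together with $|T^f| = |T|$ forces $T^f = T$. For $N_\L(T) \subseteq N_\L(T^*)$, I fix $f \in N_\L(T)$. Since $R$ is a normal $p$-subgroup of the locality, the standard fact $R \leq S_f$ and $R^f = R$ applies; combined with $T \leq S_f$ this gives $T^* = TR \leq S_f$, and the fact that $c_f$ is a group homomorphism on $S_f$ yields $(T^*)^f = T^f R^f = TR = T^*$.

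The main technical obstacle is the factorisation $T^* = TR$; once the partial-group associativity step is handled cleanly (the argument above), the rest is essentially formal. A secondary point worth flagging is the input $R \leq S_f$ for every $f \in \L$, which is routine for normal $p$-subgroups of a locality but should be cited explicitly.
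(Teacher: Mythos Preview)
Your proof is correct and follows essentially the same approach as the paper's. Both arguments rest on the factorisation $T^* = (S\cap E(\L))\,O_p(\L)$ together with $E(\L)\unlhd\L$ (for the inclusion $N_\L(T^*)\subseteq N_\L(T)$, via $T = T^*\cap E(\L)$) and $\L = N_\L(O_p(\L))$ (for the reverse inclusion). The only difference is that the paper cites \cite[Lemma~11.9]{Henke:Regular} directly for $T^* = (S\cap E(\L))O_p(\L)$, whereas you derive it from the coarser statement $F^*(\L)=E(\L)O_p(\L)$; your derivation is fine, though you should note that $(e,r,r^{-1})\in\D$ follows from $S_{(e,r,r^{-1})}=S_{(e,r)}\in\Delta$ rather than leaving it implicit.
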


\begin{proof}
By \cite[Lemma~11.9]{Henke:Regular}, we have
\[T^*=S\cap F^*(\L)=(S\cap E(\L))O_p(\L).\]
As $E(\L)\unlhd \L$, $S\cap E(\L)=T^*\cap E(\L)$ and $\L=N_\L(O_p(\L))$, the assertion follows.
\end{proof}

\begin{lemma}\label{L:NLT*actsComp}
$N_\L(T^*)$ acts on $\L$ and also on the set $\Comp(\L)$ of components of $\L$ via conjugation. More precisely, for every $f\in\L$, we have $\D(f)=\L$ and $c_f$ is an automorphism of $\L$.
\end{lemma}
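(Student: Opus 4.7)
My strategy is to show first that for each $f \in N_\L(T^*)$ the conjugation map $c_f$ is defined on all of $\L$, then to promote $c_f$ to a partial-group automorphism; the action on $\Comp(\L)$ will then be automatic because partial-group automorphisms preserve partial subnormality and the intrinsic structural property defining components.

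For $\D(f) = \L$: fix $g \in \L$. By Lemma~\ref{L:1}, the word $(f, f^{-1}, g, f)$ lies in $\D$. Since subwords of words in $\D$ are themselves in $\D$ (by the axioms of a partial group), the subword $(f^{-1}, g, f)$ lies in $\D$, which is exactly the condition $g \in \D(f)$.

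To promote $c_f$ to a partial-group automorphism, given $(g_1,\ldots,g_n)\in \D$ I want to produce $(g_1^f,\ldots,g_n^f)\in\D$ with product $(g_1\cdots g_n)^f$. Set $P := S_{(g_1,\ldots,g_n)} \cap T^* \in \Delta$ (cf.\ \eqref{E:SwcapTstar}) and $P_i := P^{g_1\cdots g_i}$; all the $P_i$ lie in $T^*$ because $F^*(\L)$ is partial normal in $\L$ and contains $P$. Since $f \in N_\L(T^*)$ and $\Delta = \delta(\F)$ restricted to subgroups of $T^*$ coincides with $\F^s\cap\{Q\leq T^*\}$, which is closed under $\F$-conjugacy, every member of the chain $P^{f^{-1}}, P, P_1, P_1^f, P_1, P_2, \ldots, P_n, P_n^f$ belongs to $\Delta$; this witnesses that
\[w := (f^{-1}, g_1, f, f^{-1}, g_2, f, \ldots, f^{-1}, g_n, f)\in\D.\]
Iterated application of the associativity axiom, first to collapse each inner triple $(f^{-1},g_i,f)$ to $g_i^f$ and then to remove the identities arising from $\Pi(f,f^{-1}) = 1$, yields $(g_1^f,\ldots,g_n^f)\in \D$ with $\Pi(g_1^f,\ldots,g_n^f) = \Pi(w) = (g_1\cdots g_n)^f$. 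The compatibility $(g^{-1})^f=(g^f)^{-1}$ is then routine from the inversion axiom. Running the same argument for $f^{-1}\in N_\L(T^*)$ shows $c_{f^{-1}}$ is also everywhere defined, and $\Pi(f,f^{-1})=1=\Pi(f^{-1},f)$ together with the associativity axiom forces $c_fc_{f^{-1}} = \id_\L = c_{f^{-1}}c_f$; hence $c_f$ is bijective and is an automorphism of the partial group $\L$.

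Finally, since $c_f$ is a partial-group automorphism of $\L$, it carries partial normal subgroups to partial normal subgroups, hence partial subnormal subgroups to partial subnormal subgroups, and it preserves the defining conditions of a component in \cite[Definition~11.1]{Henke:Regular}. Therefore $\K^f\in\Comp(\L)$ for every $\K\in\Comp(\L)$, and $N_\L(T^*)$ acts on $\Comp(\L)$ via conjugation. The main obstacle is the middle step: one must carefully assemble the witnessing chain of subgroups in $\Delta$ (using that $T^*$ is strongly closed, that $\Delta = \delta(\F)$, and the $\F$-conjugacy closure of $\F^s$), and then navigate the associativity axioms so that after all the collapses the product of the inflated word $w$ is correctly identified with $(g_1\cdots g_n)^f$.
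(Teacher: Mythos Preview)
Your approach is correct and is in fact more informative than the paper's own proof, which simply cites \cite[Lemma~3.19(b) or Lemma~10.11(c)]{Henke:Regular} for the automorphism statement, \cite[Lemma~2.13]{Henke:NormalizerSubnormal} for the action on $\L$, and \cite[Lemma~11.12]{Henke:Regular} for the action on components. What you have written is essentially the content of those cited lemmas unpacked in the present setting, so the underlying mathematics coincides; you are just supplying the argument in place rather than deferring to the literature.

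Two small points to tidy. First, in your witnessing chain for $w=(f^{-1},g_1,f,f^{-1},g_2,f,\ldots)$ the initial group should be $P^{f}$, not $P^{f^{-1}}$: one needs $Q_0^{f^{-1}}=P$, hence $Q_0=P^{f}$. This is harmless, since $P^{f}\leq T^*$ is $\F$-conjugate to $P$ just as $P^{f^{-1}}$ is, and therefore lies in $\Delta$ for the same reason. Second, to justify that $N_\L(T^*)$ genuinely \emph{acts} on $\L$ (and hence on $\Comp(\L)$) you also need $c_{f_1f_2}=c_{f_1}c_{f_2}$ for $f_1,f_2\in N_\L(T^*)$. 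You do not state this explicitly, but it follows by the same mechanism: for $g\in\L$ the word $(f_2^{-1},f_1^{-1},g,f_1,f_2)$ lies in $\D$ via $S_g\cap T^*$, and two applications of the associativity axiom give $g^{f_1f_2}=(g^{f_1})^{f_2}$.
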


\begin{proof}
It is a consequence of Theorem~2 and Lemma~10.11(c) in \cite{Henke:Regular} that, for every $f\in N_\L(T^*)$, we have $\L=\D(f)$ and the conjugation map $c_f$ is an automorphism of $\L$. Moreover, by \cite[Lemma~2.13]{Henke:NormalizerSubnormal}, $N_\L(T^*)$ acts on the set $\L$ by conjugation. Thus, by \cite[Lemma~11.12]{Henke:Regular}, $N_\L(T^*)$ acts also on the set of components of $\L$.
\end{proof}

\begin{lemma}\label{L:ComponentProducts}
Let $\fC_1,\fC_2\subseteq\Comp(\L)$, $\fC:=\fC_1\cup \fC_2$, $\N_i:=\prod_{\K\in\fC_i}\K$ for $i=1,2$ and $\N:=\prod_{\K\in\fC}\K$. Then the following hold:
\begin{itemize}
 \item [(a)] $\N=\N_1\N_2$ and $\N\cap S=(\N_1\cap S)(\N_2\cap S)$. In particular, if $\fC=\Comp(\L)$, then $E(\L)=\N_1\N_2$ and $E(\L)\cap S=(\N_1\cap S)(\N_2\cap S)$.
 \item [(b)] Suppose $\fC_1\cap \fC_2=\emptyset$. Then $\N_i\subseteq C_\L(\N_{3-i})$ for $i=1,2$. Moreover, for all $n\in\N_1$ and $m\in\N_2$, we have $(n,m)\in\D$ and $S_{nm}\cap T^*=S_{(n,m)}\cap T^*$.
 \item [(c)] $N_\N(T^*)=N_{\N_1}(T^*)N_{\N_2}(T^*)$.
\end{itemize}
\end{lemma}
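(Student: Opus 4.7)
I would prove (b), then (a), then (c), since each step feeds the next.

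For part (b), distinct components of a regular locality centralize each other by \cite[Proposition~11.7]{Henke:Regular}. Combined with $\fC_1\cap\fC_2=\emptyset$, this shows each $\K\in\fC_1$ is contained in the partial subgroup $C_\L(\N_2)$, and since partial subgroups are closed under the partial product, $\N_1\subseteq C_\L(\N_2)$. For the second assertion, fix $n\in\N_1$ and $m\in\N_2$; centralization yields $m^n=m$. By \eqref{E:SwcapTstar}, $(n,m)\in\D$ reduces to verifying $S_{(n,m)}\cap T^*\in\Delta$. The inclusion $S_{(n,m)}\cap T^*\leq S_{nm}\cap T^*\in\Delta$ comes from \eqref{E:2}, and the reverse inclusion is a consequence of $m^n=m$: an element of $T^*$ that stays in $S$ under multiplication by $nm$ must also stay in $S$ after the intermediate conjugation by $n$ alone.

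For part (a), commutativity of the component product and the identity $\K\K=\K$ for each component $\K$ (both from \cite[Proposition~11.7]{Henke:Regular}) give $\N_1\N_2=\prod_{\K\in\fC_1\cup\fC_2}\K=\N$. For the Sylow identity, the inclusion $(\N_1\cap S)(\N_2\cap S)\subseteq\N\cap S$ is clear. For the reverse, by a splitting theorem for products of partial normal subgroups (cf.\ \cite[Theorem~1]{Henke:2015a}) applied inside $F^*(\L)$, any $t\in\N\cap S$ can be written $t=n_1n_2$ with $n_i\in\N_i$ and $S_t=S_{(n_1,n_2)}=S$, which forces $n_i\in N_\L(S)$. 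After reducing to $\fC_1\cap\fC_2=\emptyset$ (which leaves both sides unchanged), part (b) shows that $n_1,n_2$ commute, and a Sylow argument inside the subgroup $N_\L(S)$ (using that $\N_i\cap S$ is a Sylow $p$-subgroup of $\N_i\cap N_\L(S)$) then produces the desired factorization.

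For part (c), let $f\in N_\N(T^*)$ and apply the splitting to write $f=n_1n_2$ with $n_i\in\N_i$ and $S_f=S_{(n_1,n_2)}$. Since $T^*\leq S_f$, the set $(T^*)^{n_1}$ is defined and contained in $S$. Because $n_1\in F^*(\L)\unlhd\L$ and $T^*$ is strongly closed, $(T^*)^{n_1}\leq S\cap F^*(\L)=T^*$, with equality by cardinality; hence $n_1\in N_{\N_1}(T^*)$, and symmetrically $n_2\in N_{\N_2}(T^*)$, so $f\in N_{\N_1}(T^*)N_{\N_2}(T^*)$. The reverse inclusion follows from the fact that both factors normalize $T^*$ and any defined product remains in $\N$. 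The main obstacle will be the Sylow step in part (a): passing from an abstract decomposition with $n_i\in N_\L(S)$ to one with $n_i'\in\N_i\cap S$ requires careful Sylow bookkeeping inside $N_\L(S)$, exploiting the commutation granted by part (b).
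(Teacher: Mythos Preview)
Your overall strategy is close to the paper's, but there is a genuine logical gap in part (b), and the Sylow step in (a) is handled much more simply in the paper.

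\textbf{The circularity in (b).} You write that ``$(n,m)\in\D$ reduces to verifying $S_{(n,m)}\cap T^*\in\Delta$'' and then invoke \eqref{E:2} to compare $S_{(n,m)}\cap T^*$ with $S_{nm}\cap T^*$. But \eqref{E:2} is stated for words $w\in\D$, and the product $nm=\Pi(n,m)$ is not even defined unless $(n,m)\in\D$; so you are assuming what you want to prove. The paper avoids this by citing \cite[Lemma~3.5]{Henke:Regular}, which shows directly that if $n\in C_\L(\{m\})$ then $(n,m),(m,n)\in\D$ with $nm=mn$. Once $(n,m)\in\D$ is in hand, your use of \eqref{E:2} for the inclusion $S_{(n,m)}\cap T^*\leq S_{nm}\cap T^*$ is legitimate. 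For the reverse inclusion, however, your sentence ``an element of $T^*$ that stays in $S$ under multiplication by $nm$ must also stay in $S$ after the intermediate conjugation by $n$ alone'' is not justified: knowing $s^{nm}\in S$ does not obviously force $s^n\in S$. The paper instead observes that $\N_1$ and $\N_2$ are \emph{commuting} partial normal subgroups of $F^*(\L)$ and applies \cite[Theorem~1(d)]{Henke:Regular} inside $F^*(\L)$, which yields $S_{nm}\cap T^*=S_{(n,m)}\cap T^*$ directly.

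\textbf{The Sylow step in (a).} Your route through $N_\L(S)$ and a further Sylow argument is unnecessary (and, as you note, not completed). The paper simply observes that $\N_1,\N_2,\N$ are partial normal subgroups of $F^*(\L)$ and applies \cite[Theorem~1]{Henke:2015a} with $F^*(\L)$ in place of $\L$ to obtain $(\N_1\N_2)\cap S_0=(\N_1\cap S_0)(\N_2\cap S_0)$ for $S_0=S\cap F^*(\L)$, which is exactly the desired identity. No detour through $N_\L(S)$ is needed. For the equality $\N=\N_1\N_2$, the paper (after reducing to $\fC_1\cap\fC_2=\emptyset$ as you do) appeals to \cite[Theorem~11.18(a)]{Henke:Regular} together with \cite[Lemmas~4.5,~4.8]{Henke:Regular} on internal central products, rather than manipulating component products directly.

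Your argument for (c) is essentially the paper's, except that the paper uses part (b) (specifically $S_{nm}\cap T^*=S_{(n,m)}\cap T^*$) to pass from $T^*\leq S_f$ to $T^*\leq S_{(n,m)}$, rather than a splitting with $S_f=S_{(n_1,n_2)}$.
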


\begin{proof}
By \cite[Proposition~11.7]{Henke:Regular}, $\N_1$, $\N_2$ and $\N$ are well-defined and partial normal subgroups of $F^*(\L)$. As $F^*(\L)$ forms by  Theorem~\ref{T:RegularSubnormal} a regular locality with Sylow subgroup $T^*$, it follows in particular from \cite[Theorem~1]{Henke:2015} applied with $F^*(\L)$ in place of $\L$ that $(\N_1\N_2)\cap S=(\N_1\N_2)\cap T^*=(\N_1\cap T^*)(\N_2\cap T^*)=(\N_1\cap S)(\N_2\cap S)$. Thus, for (a), it remains only to prove that $\N=\N_1\N_2$. In fact, as $\N_i\subseteq\N$ for $i=1,2$ and $\N$ is a partial subgroup, we have $\N_1\N_2\subseteq \N$. So for (a) it is sufficient to prove that
\begin{equation}\label{E:NinN1N2}
\N\subseteq \N_1\N_2.
\end{equation}
As $N_\L(T^*)$ is a subgroup, we have also $N_{\N_1}(T^*) N_{\N_2}(T^*)\subseteq N_\N(T^*)$. So for (c) it remains only to show that
\begin{equation}\label{E:NNTstarFactorize}
N_\N(T^*)\subseteq N_{\N_1}(T^*) N_{\N_2}(T^*).
\end{equation}
Observe now that, replacing $\fC_2$ by $\fC_2\backslash \fC_1$, we may assume for the proof of \eqref{E:NinN1N2} and \eqref{E:NNTstarFactorize} that $\fC_1\cap \fC_2=\emptyset$. So we will assume this property from now on throughout.

\smallskip

Applying first  \cite[Theorem~11.18(a)]{Henke:Regular} and then \cite[Lemma~4.5, Lemma~4.8]{Henke:Regular}, one sees that $\N=\N_1\N_2$, and that $\N_i\subseteq C_\L(\N_{3-i})$ for $i=1,2$. In particular, \eqref{E:NinN1N2} holds and thus (a).

\smallskip

For the proof of the remaining statement in (b) let now $n\in\N_1$ and $m\in\N_2$. As $\N_1\subseteq C_\L(\N_2)$, it follows from  \cite[Lemma~3.5]{Henke:Regular} that $(n,m)$ and $(m,n)$ are in $\D$ and that $nm=mn$.
In particular, $\N_1$ and $\N_2$ commute in the sense of \cite[Definition~2]{Henke:Regular}, i.e. they are commuting partial normal subgroups of $F^*(\L)$. Hence, it follows from \cite[Theorem~1(d)]{Henke:Regular} applied with $F^*(\L)$ in place of $\L$ that  $S_{nm}\cap T^*=S_{(n,m)}\cap T^*$. This proves (b).

\smallskip

For the proof of \eqref{E:NNTstarFactorize} let $f\in N_\N(T^*)$. By (a), we may pick $n\in\N_1$ and $m\in\N_2$ with $(n,m)\in\D$ and $f=nm$. It follows then from (b) that  $T^*=T^*\cap S_f=T^*\cap S_{nm}\leq S_{(n,m)}$. As $T^*$ is strongly closed in $\F_S(\L)$, it follows that $n\in N_{\N_1}(T^*)$ and $m\in N_{\N_2}(T^*)$. This proves \eqref{E:NNTstarFactorize} and thus (c).
\end{proof}

\begin{lemma}\label{L:NHT*}
Let $\H\subn \L$. Then $N_\H(T^*)=N_\H(E(\H)\cap S)\subn N_\L(T^*)$ and $S\cap \H=S\cap N_\H(T^*)$.
\end{lemma}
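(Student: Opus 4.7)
The plan is to use Theorem~\ref{T:RegularSubnormal}, which exhibits $\H$ as a regular locality with Sylow $p$-subgroup $S\cap\H$, together with a decomposition of $T^*$ that separates the ``$\H$-part'' from its complement. Writing $\fC := \Comp(\L)\setminus\Comp(\H)$ and $\N := \prod_{\K\in\fC}\K$, the inclusion $\Comp(\H)\subseteq\Comp(\L)$ combined with Lemma~\ref{L:ComponentProducts}(a) gives $E(\L)\cap S = (E(\H)\cap S)(\N\cap S)$, so
\[T^* \;=\; (E(\H)\cap S)(\N\cap S)O_p(\L).\]

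To prove $N_\H(T^*) = N_\H(E(\H)\cap S)$, I would argue both inclusions. For ``$\subseteq$'': any $f\in N_\H(T^*)$ satisfies $E(\H)\cap S\leq T^*\leq S_f$, and since $E(\H)\unlhd\H$ with $E(\H)\subseteq F^*(\L)$, the image $(E(\H)\cap S)^f$ lies in $E(\H)\cap T^* = E(\H)\cap S$. For ``$\supseteq$'': since $S_f$ is a subgroup of $S$, it is enough to verify that each factor in the decomposition of $T^*$ lies in $S_f$. The factor $E(\H)\cap S$ does so by hypothesis. The factor $O_p(\L)$ lies in every $P\in\Delta=\delta(\F)$ because $P\cap F^*(\L)\in\F^s$ and every $\F$-subcentric subgroup contains $O_p(\F)=O_p(\L)$; in particular $O_p(\L)\leq S_f$. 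For the factor $\N\cap S$ I would invoke the fact that components of $\L$ outside $\Comp(\H)$ centralize the whole of $\H$ (the natural strengthening of Lemma~\ref{L:ComponentProducts}(b), analogous to the classical group-theoretic result), which forces $n^f=n$ for every $n\in\N\cap S$ and hence $\N\cap S\leq S_f$. Once $T^*\leq S_f$ is established, strong closure of $T^*=F^*(\L)\cap S$ in $\F_S(\L)$ automatically gives $(T^*)^f = T^*$.

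For $N_\H(T^*)\subn N_\L(T^*)$, I would pick a subnormal series $\H=\H_0\unlhd\cdots\unlhd\H_n=\L$ and show by induction that $N_{\H_i}(T^*)\unlhd N_{\H_{i+1}}(T^*)$: by Lemma~\ref{L:NLT*actsComp}, conjugation by any $a\in N_\L(T^*)$ is an automorphism of $\L$, so for $b\in N_{\H_i}(T^*)$ and $a\in N_{\H_{i+1}}(T^*)$, partial normality yields $b^a\in\H_i$, and $(T^*)^{b^a} = (((T^*)^{a^{-1}})^b)^a = T^*$ follows from both $a$ and $b$ normalizing $T^*$. The identity $S\cap\H = S\cap N_\H(T^*)$ is immediate, because $T^*$ is strongly closed in $\F_S(\L)$ (hence normal in $S$), so $S\subseteq N_\L(T^*)$ and $S\cap\H\subseteq N_\H(T^*)$; the reverse inclusion is trivial. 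The main obstacle is the ``$\supseteq$'' step in the normalizer equality, specifically the verification $\N\cap S\leq S_f$, which hinges on extending component-centralization from $E(\H)$ to all of $\H$.
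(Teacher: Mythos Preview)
Your approach is essentially the same as the paper's: decompose $E(\L)\cap S$ as $(E(\H)\cap S)(\N\cap S)$ with $\N$ the product of the components outside $\Comp(\H)$, and use that $\H$ centralizes $\N$. The paper packages the $O_p(\L)$-part into Lemma~\ref{L:NLT*} (so it only needs to show $f$ normalizes $E(\L)\cap S$, not all of $T^*$), but the substance is the same. Your ``main obstacle'' is exactly the crux, and the paper resolves it with concrete citations: \cite[Theorem~11.18(c)]{Henke:Regular} together with \cite[Lemma~4.8]{Henke:Regular} give $\H\subseteq C_\L(\N)$, which is precisely the strengthening of Lemma~\ref{L:ComponentProducts}(b) you anticipated.

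There is, however, a genuine error in your justification for $O_p(\L)\leq S_f$. The claim that every $\F$-subcentric subgroup contains $O_p(\F)$ is false. For instance, take $G=S_4$, $p=2$, $S=D_8$, $\F=\F_S(G)$: the cyclic subgroup $C=\langle(1234)\rangle$ satisfies $C_S(C)=C$ and is its own unique $\F$-conjugate in $S$, hence is $\F$-centric (and so $\F$-subcentric), yet $O_2(\F)=V_4\not\leq C$. Consequently your deduction that $O_p(\L)$ lies in every member of $\delta(\F)$ is not valid. The conclusion $O_p(\L)\leq S_f$ is nevertheless true, but for the much simpler reason used in the proof of Lemma~\ref{L:NLT*}: one has $\L=N_\L(O_p(\L))$, so every $f\in\L$ normalizes $O_p(\L)$ and thus $O_p(\L)\leq S_f$. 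Replacing your flawed step with this observation (or, as the paper does, invoking Lemma~\ref{L:NLT*} to reduce to normalizing $E(\L)\cap S$) makes the argument complete.
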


\begin{proof}
Since $T^*\unlhd S$, we have $S\cap \H\subseteq N_\H(T^*)$ and thus $S\cap \H=S\cap N_\H(T^*)$. Since $\H\subn\L$, it follows moreover easily that $N_\H(T^*)=\H\cap N_\L(T^*)$ is subnormal in $N_\L(T^*)$.

\smallskip

As mentioned before, it follows from \cite[Remark~11.2]{Henke:Regular} that $E(\H)\subseteq E(\L)\subseteq F^*(\L)$. Thus, $E(\H)\cap S=E(\H)\cap T^*$. Since $E(\H)\unlhd \H$ by \cite[Lemma~11.13]{Henke:Regular}, it follows that $N_\H(T^*)\subseteq N_\H(T^*\cap E(\H))=N_\H(S\cap E(\H))$. It remains thus only to show that $N_\H(S\cap E(\H))\subseteq N_\H(T^*)$.

\smallskip

Applying first \cite[Theorem~11.18(c)]{Henke:Regular} and then \cite[Lemma~4.8]{Henke:Regular}, one sees that $\H$ is in the centralizer of $\M:=\prod_{\K\in\Comp(\L)\backslash\Comp(\H)}\K$. In particular, $\H\subseteq C_\L(\M\cap S)$. Moreover, by Lemma~\ref{L:ComponentProducts}(a), $E(\L)\cap S=(E(\H)\cap S)(\M\cap S)$. Using  Lemma~\ref{L:NLT*}, we can thus conclude that $N_{\H}(S\cap E(\H))\subseteq N_\L(S\cap E(\L))=N_\L(T^*)$. This proves the assertion.
\end{proof}




\section{Wielandt's Join Theorem for regular localities}\label{S:Regular}

\subsection{Some products in regular localities} In this subsection we prove  some general results on products in regular localities, which allow us later to reduce the proof of Theorem~\ref{T:RegularLocalities} to Wielandt's Join Theorem for groups and to Meierfrankenfeld's Lemma.

\smallskip

\textbf{Throughout this subsection let $(\L,\Delta,S)$ be a regular locality and $T^*:=S\cap F^*(\L)$.}

\begin{lemma}\label{L:2a}
Let $H\leq N_\L(T^*)$ and $\N\unlhd \L$. Then $\N H=H\N$ is a partial subgroup of $\L$. If $H\cap S\in\Syl_p(H)$, then $(\N H)\cap S=(\N\cap S)(H\cap S)$ is a maximal $p$-subgroup of $\N H$.
\end{lemma}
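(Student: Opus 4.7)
The plan is to exploit the fact that $H \subseteq N_\L(T^*)$, an ordinary finite group, to reduce the partial-group assertions to ordinary group theory. The key tools are Lemma~\ref{L:1} and Lemma~\ref{L:NLT*actsComp}: every $h \in H$ gives an automorphism $c_h$ of the partial group $\L$, and the words $(h,g), (g,h)$ lie in $\D$ for every $g \in \L$, with $gh = h g^h$.

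I would first verify $\N H = H\N$. For $n \in \N$ and $h \in H$, Lemma~\ref{L:1} gives $(n,h) \in \D$ and $nh = h n^h$ with $n^h \in \N$ by partial normality, so $\N H \subseteq H\N$; the reverse inclusion is symmetric. Closure of $\N H$ under inversion is then immediate from $(nh)^{-1} = h^{-1} n^{-1} \in H\N = \N H$. For closure under defined products I would induct on word length, reducing to the two-element case $(n_1 h_1)(n_2 h_2) = (n_1 n_2^{h_1^{-1}})(h_1 h_2) \in \N H$, where the identity $h_1 n_2 = n_2^{h_1^{-1}} h_1$ follows from $\Pi(h_1, n_2) = \Pi(h_1, h_1^{-1}, n_2^{h_1^{-1}}, h_1) = \Pi(n_2^{h_1^{-1}}, h_1)$ by the associativity axiom. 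This bookkeeping is the main technical obstacle, and it is the place where $H \subseteq N_\L(T^*)$ is indispensable: only then is conjugation by elements of $H$ a total operation on $\L$.

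To compute $(\N H) \cap S$, note that if $g = nh \in \N H \cap S$ then $g, h \in N_\L(T^*)$ (since $T^* \unlhd S$), which forces $n = gh^{-1} \in N_\N(T^*)$. Hence $\N H \cap S \subseteq \tilde{G} \cap S$, where $\tilde{G} := N_\N(T^*)\, H$ is an ordinary subgroup of $N_\L(T^*)$ with $N_\N(T^*) \unlhd \tilde{G}$. By Theorem~\ref{T:RegularSubnormal}, $\N \cap S$ is a maximal $p$-subgroup of $\N$, hence also a Sylow $p$-subgroup of $N_\N(T^*)$. Combined with $H \cap S \in \Syl_p(H)$ and the standard fact that $\N \cap H \cap S$ is then a Sylow $p$-subgroup of $\N \cap H \unlhd H$, the order formula in $\tilde{G}$ shows $(\N \cap S)(H \cap S)$ is a Sylow $p$-subgroup of $\tilde{G}$; since it is contained in $\tilde{G} \cap S$, the equality $\N H \cap S = (\N \cap S)(H \cap S)$ follows.

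Maximality of $P := (\N \cap S)(H \cap S)$ among $p$-subgroups of $\N H$ uses the same well-definedness behind the product formula: the assignment $nh \mapsto h(\N \cap H)$ is a well-defined group homomorphism $\phi$ from any $p$-subgroup $M$ of $\N H$ to $H/(\N \cap H)$, with kernel $M \cap \N$. Since $\N$ is itself a regular locality, $|M \cap \N| \leq |\N \cap S|$, and $|\phi(M)| \leq |H \cap S|/|\N \cap H \cap S|$, so $|M| \leq |P|$, giving maximality.
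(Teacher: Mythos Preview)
Your proposal is correct and follows essentially the same strategy as the paper: use Lemma~\ref{L:1} to show $\N H = H\N$ is a partial subgroup, and reduce the Sylow computation to the ordinary group $N_\N(T^*)H$ inside $N_\L(T^*)$. The difference is one of presentation rather than substance. The paper outsources both the partial-subgroup claim and the maximality claim to \cite[Theorem~6.1(a),(c)]{Grazian/Henke}, merely remarking that the former ``essentially uses the property stated in Lemma~\ref{L:1}''; you instead sketch those arguments directly. Your bookkeeping for closure under products (inducting on word length and rewriting $(n_1h_1)(n_2h_2)$ via $h_1 n_2 = n_2^{h_1^{-1}} h_1$) is exactly the mechanism behind the cited result, and your maximality argument via the projection $\phi\colon M \to H/(\N\cap H)$ is a clean alternative to invoking \cite[Theorem~6.1(c)]{Grazian/Henke}. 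The only point that deserves a word of caution is the step $|M\cap\N|\leq |\N\cap S|$: this uses that in a locality every $p$-subgroup is conjugate into the Sylow subgroup, which is standard but should be cited explicitly rather than folded into Theorem~\ref{T:RegularSubnormal}.
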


\begin{proof}
It is shown in \cite[Theorem~6.1(a)]{Grazian/Henke} that $\N H=H\N$ is a partial subgroup; essentially, the argument uses the property stated in Lemma~\ref{L:1}. Suppose now $H\cap S\in\Syl_p(H)$. As $N_\N(T^*)\unlhd N_\L(T^*)$ with $\N\cap S=N_\N(T^*)\cap S\in\Syl_p(N_\N(T^*))$,  it follows that 
\[S_0:=(\N\cap S)(H\cap S)\in\Syl_p(N_\N(T^*)H).\] 
Thus, it is a consequence of \cite[Theorem~6.1(c)]{Grazian/Henke} that $S_0$ is a maximal $p$-subgroup of $\N H$. Since $S_0\leq S\cap (\N H)$ and $S\cap (\N H)$ is a $p$-subgroup of $\N H$, we can conclude that $S\cap \N H=S_0$ is a maximal $p$-subgroup of $\N H$.
\end{proof}

\begin{lemma}\label{L:2b}
Let $H\subn N_\L(T^*)$. Then $\hat{\H}:=E(\L)H=HE(\L)\subn\L$ and $E(\hat{\H})=E(\L)$.
\end{lemma}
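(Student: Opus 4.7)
The assertion breaks into three pieces: (1) $\hat{\H}:=E(\L)H=HE(\L)$ is a partial subgroup of $\L$; (2) $\hat{\H}\subn\L$; and (3) $E(\hat{\H})=E(\L)$. Piece~(1) is immediate from Lemma~\ref{L:2a} applied with $\N=E(\L)\unlhd\L$, using that $H\leq N_\L(T^*)$ (which follows from $H\subn N_\L(T^*)$).

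For piece~(2), the plan is to lift a subnormal series for $H$ in the group $N_\L(T^*)$ to a subnormal series in $\L$ by multiplying each term by $E(\L)$. So fix a series
\[H=H_0\unlhd H_1\unlhd\cdots\unlhd H_k=N_\L(T^*),\]
and set $\hat{\H}_i:=E(\L)H_i$; each $\hat{\H}_i$ is a partial subgroup of $\L$ by Lemma~\ref{L:2a}. First, $\hat{\H}_k=\L$: applying Lemma~\ref{L:SplitFSL}(a) to the partial normal subgroup $F^*(\L)\unlhd\L$ (whose intersection with $S$ is $T^*$), every $g\in\L$ factors as $g=mh$ with $m\in F^*(\L)$ and $h\in N_\L(T^*)$; combining this with $F^*(\L)=E(\L)O_p(\L)$ and $O_p(\L)\leq T^*\leq N_\L(T^*)$ gives $m\in E(\L)N_\L(T^*)$, whence $g\in E(\L)N_\L(T^*)=\hat{\H}_k$.

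The principal technical obstacle is the verification that $\hat{\H}_{i-1}\unlhd\hat{\H}_i$ in the partial-group sense. Given $g\in\hat{\H}_i$ and $x\in\D(g)\cap\hat{\H}_{i-1}$, write $g=nh$ and $x=n'h'$ with $n,n'\in E(\L)$, $h\in H_i$, $h'\in H_{i-1}$. Since $h\in N_\L(T^*)$, Lemma~\ref{L:NLT*actsComp} makes $c_h$ an automorphism of $\L$; in particular $c_h$ preserves both the partial normal subgroup $E(\L)\unlhd\L$ and the normal subgroup $H_{i-1}\unlhd H_i$. Invoking Lemma~\ref{L:1} to supply the word reassociations available near elements of $N_\L(T^*)$, one can then unwind $x^g$ through the partial-group axioms, conjugating the $H_{i-1}$-part by $h$ and the $E(\L)$-part by $g$, to land inside $E(\L)H_{i-1}=\hat{\H}_{i-1}$. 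This is the one step whose details require genuine care with the partial product.

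For piece~(3), once $\hat{\H}\subn\L$ is known, Theorem~\ref{T:RegularSubnormal} makes $E(\hat{\H})$ meaningful, and $\Comp(\hat{\H})\subseteq\Comp(\L)$ (\cite[Remark~11.2]{Henke:Regular}) gives the inclusion $E(\hat{\H})\subseteq E(\L)$. For the reverse, pick $\K\in\Comp(\L)$. Then $\K\subseteq E(\L)\subseteq\hat{\H}$, while $\K\unlhd F^*(\L)$ (\cite[Proposition~11.7]{Henke:Regular}) yields $\K\unlhd E(\L)$. Since $E(\L)\unlhd\L$ restricts to $E(\L)\unlhd\hat{\H}$ (as $\hat{\H}$ is a partial subgroup of $\L$ containing $E(\L)$), we obtain $\K\unlhd E(\L)\unlhd\hat{\H}$, so $\K\subn\hat{\H}$. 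Being quasi-simple, $\K$ is then a component of $\hat{\H}$, whence $E(\L)\subseteq E(\hat{\H})$ and the two partial normal subgroups coincide.
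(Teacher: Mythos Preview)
Your proof is correct and follows essentially the same approach as the paper: lift a subnormal series for $H$ in $N_\L(T^*)$ to one for $E(\L)H$ in $\L$, with the partial-group normality $\hat{\H}_{i-1}\unlhd\hat{\H}_i$ handled via Lemma~\ref{L:1}. The only cosmetic differences are that the paper obtains $E(\L)N_\L(T^*)=\L$ directly from the Frattini Lemma applied to $E(\L)\unlhd\L$ together with Lemma~\ref{L:NLT*} (rather than passing through $F^*(\L)$), and settles $\Comp(\hat{\H})=\Comp(\L)$ by a single citation of \cite[Remark~11.2]{Henke:Regular} where you spell out one inclusion by hand.
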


\begin{proof}
It is a special case of \cite[Theorem~2]{Henke:NK} that $\hat{\H}$ is subnormal in $\L$, but we supply a shorter direct argument here: Let $H=H_0\unlhd H_1\unlhd\cdots \unlhd H_n=N_\L(T^*)$ be a subnormal series for $H$ in $N_\L(T^*)$. Then by Lemma~\ref{L:2a}, $E(\L) H_i=H_i E(\L)$ is a partial subgroup of $\L$ for all $i=0,1,2,\dots,n$. By the Frattini Lemma \cite[Corollary~3.11]{Chermak:2015} and Lemma~\ref{L:NLT*}, we have $\L=N_\L(S\cap E(\L))E(\L)=N_\L(T^*)E(\L)=H_nE(\L)$. Thus, it is sufficient to prove that $H_{i-1}E(\L)\unlhd H_iE(\L)$ for $i=1,2,\dots,n$. So fix $i\in\{1,2,\dots,n\}$, $x,y\in E(\L)$, $h\in H_{i-1}$ and $f\in H_i$ with \[w:=((fy)^{-1},hx,fy)\in\D.\]
By \cite[Lemma~1.4(f)]{Chermak:2015}, $(fy)^{-1}=y^{-1}f^{-1}$. Moreover, Lemma~\ref{L:1} gives
\[S_{hx}\cap T^*=S_{(h,x)}\cap T^*,\]
\[S_{fy}\cap T^*=S_{(f,y)}\cap T^*\mbox{ and }S_{(fy)^{-1}}\cap T^*=S_{y^{-1}f^{-1}}\cap T^*=S_{(y^{-1},f^{-1})}\cap T^*.\]
Hence, setting $u:=(y^{-1},f^{-1},h,x,f,y)\in\D$, it follows that $S_u\cap T^*=S_w\cap T^*$. As $w\in\D$, the property \eqref{E:SwcapTstar} yields now first that  $S_u\cap T^*=S_w\cap T^*\in\Delta$ and then $u\in\D$. Using $f\in N_\L(T^*)$, we see also that
\[v:=(y^{-1},f^{-1},h,f,f^{-1},x,f,y)\in\D\]
via $S_w\cap T^*$. Hence,
\[(hx)^{fy}=\Pi(w)=\Pi(u)=\Pi(v)=(h^fx^f)^y.\]
As $h\in H_{i-1}\unlhd H_i$ and $f\in H_i$, we have $h^f\in H_{i-1}$. Moreover, $x\in E(\L)\unlhd\L$ implies $x^f\in E(\L)$. Hence, $h^fx^f\in H_{i-1}E(\L)$. Since $y\in E(\L)\subseteq H_{i-1}E(\L)$ and $H_{i-1}E(\L)$ is a partial subgroup, it follows $(hx)^{fy}=(h^fx^f)^y\in H_{i-1}E(\L)$. This proves $H_{i-1}E(\L)\unlhd H_iE(\L)$ and thus
$\hat{\H}:=E(\L) H=H E(\L)\subn \L$. It follows from \cite[Remark~11.2]{Henke:Regular} that $\Comp(\hat{\H})=\Comp(\L)$ and thus $E(\hat{\H})=E(\L)$.
\end{proof}

Recall that, by Lemma~\ref{L:NLT*actsComp}, $N_\L(T^*)$ acts on $\L$ and on the set of components of $\L$. In particular, if $H\leq N_\L(T^*)$, then it makes sense to say that a subset of $\Comp(\L)$ is $H$-invariant.


\begin{lemma}\label{L:3}
 Let $H\subn N_\L(T^*)$ and $\fC\subseteq\Comp(\L)$ be $H$-invariant. Set \[\N:=\prod_{\K\in\fC}\K\mbox{ and }\M:=\prod_{\K\in\Comp(\L)\backslash\fC}\K.\] Assume $H\subseteq C_\L(\M)$. Then $\N H\unlhd E(\L)H$, and $\N H=H\N$ is a partial subnormal subgroup of $\L$ with $\N H\cap S=(\N\cap S)(H\cap S)$. Moreover,
 \[N_{\N H}(S\cap \N)=N_\N(S\cap \N)H=N_\N(T^*)H,\;\Comp(\N H)=\fC\mbox{ and }E(\N H)=\N\unlhd \N H.\]
\end{lemma}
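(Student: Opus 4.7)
The plan is to work inside $\hat{\H}:=E(\L)H$, which by Lemma~\ref{L:2b} is a partial subgroup, subnormal in $\L$, with $E(\hat{\H})=E(\L)$. I would first establish that $\N H=H\N$ is a partial subgroup by the same type of argument used in Lemma~\ref{L:2a}: since $\fC$ is $H$-invariant, $H$ normalizes $\N$ as a set, so the commutation formula from Lemma~\ref{L:1} ($gh=hg^h$ and its inverse-form $hn=n^{h^{-1}}h$, valid because $H\le N_\L(T^*)$) allows one to push elements of $H$ past elements of $\N$ and conclude closure under the partial product and under inversion. The domain conditions need to be tracked using Lemma~\ref{L:1}'s identifications of $S_{\cdot}\cap T^*$.

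Next I would prove $\N\unlhd\hat{\H}$, and then $\N H\unlhd \hat{\H}$. For the first, by Lemma~\ref{L:ComponentProducts}(b), $\M$ centralizes $\N$, so $\N\unlhd E(\L)=\N\M$; combined with the $H$-invariance of $\N$, this gives $\N\unlhd E(\L)H=\hat{\H}$. For the second, take $y=nh\in\N H$ and $f=em\in \hat{\H}$, decompose $e=n_1m_1$ with $n_1\in\N$, $m_1\in\M$, and compute $y^f=(((nh)^{n_1})^{m_1})^{m}$ step by step: conjugation by $n_1\in\N$ gives an element of $\N\cdot h$ via the push-through formula, conjugation by $m_1\in\M$ fixes $h$ and preserves $\N$ (since $H\subseteq C_\L(\M)$ and $\N\unlhd F^*(\L)$), and conjugation by $m\in H$ preserves both factors. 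This shows $y^f\in \N H$. Combined with $\hat{\H}\subn\L$ from Lemma~\ref{L:2b}, this proves $\N H\subn\L$.

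For the intersection with $S$, any $y\in \N H\cap S$ has $y=nh$ with $h\in H\leq N_\L(T^*)$, so $n=yh^{-1}\in \N\cap N_\L(T^*)=N_\N(T^*)$; thus $\N H\cap S\subseteq \<N_\N(T^*),H\>\cap S$. Lemma~\ref{L:NHT*} applied to $\N$ (for which $E(\N)=\N$) yields $N_\N(T^*)\cap S=\N\cap S$, and Meierfrankenfeld's Lemma~\ref{L:Meierfrankenfeld} in the group $N_\L(T^*)$ gives $\<N_\N(T^*),H\>\cap S=(\N\cap S)(H\cap S)$, which together with the obvious reverse inclusion yields $\N H\cap S=(\N\cap S)(H\cap S)$. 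For the normalizer statement, the second equality $N_\N(S\cap\N)=N_\N(T^*)$ is again Lemma~\ref{L:NHT*} combined with $E(\N)=\N$. For the first equality, $H$ normalizes $S\cap\N$ (since $S\cap\N\subseteq T^*\cap\N$ and $H$ normalizes both $T^*$ and $\N$), giving $\supseteq$; conversely, writing $y=nh$ and using $(S\cap\N)^y=((S\cap\N)^n)^h$ together with $(S\cap\N)^h=S\cap\N$ forces $n\in N_\N(S\cap\N)$.

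Finally, for $\Comp(\N H)=\fC$, each $\K\in\fC$ is subnormal in $\L$; intersecting a subnormal series with $\N H$ shows $\K\subn \N H$, and since $\K$ is quasisimple it is a component of $\N H$. Conversely, any component $\tilde{\K}$ of $\N H$ lies in $\Comp(\L)$ by \cite[Remark~11.2]{Henke:Regular}; if $\tilde{\K}\notin\fC$, then $\tilde{\K}\subseteq\M\subseteq C_\L(\N)\cap C_\L(H)=C_\L(\N H)$, so $\tilde{\K}\subseteq Z(\tilde{\K})$, contradicting quasisimplicity. Hence $E(\N H)=\prod_{\K\in\fC}\K=\N$, and $\N\unlhd \N H$ is already in hand. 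The main obstacle I anticipate is bookkeeping of the $\D$-membership and the $S_w\cap T^*$ identifications throughout the conjugation computations used to establish $\N H=H\N$, its normality in $\hat{\H}$, and the decomposition $y=nh$; Lemma~\ref{L:1} together with \eqref{E:SwcapTstar} is the essential tool.
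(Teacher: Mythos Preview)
Your outline follows essentially the same route as the paper: work in $\hat{\H}=E(\L)H$ (Lemma~\ref{L:2b}), show $\N\unlhd\hat{\H}$, then $\N H\unlhd\hat{\H}$ by decomposing conjugators as $E(\L)$-part times $H$-part and further splitting the $E(\L)$-part as $\N$-part times $\M$-part, and deduce the Sylow intersection and normalizer statements. The paper streamlines two of your steps by applying Lemma~\ref{L:2a} inside the regular locality $\hat{\H}$ rather than redoing its proof, but this is cosmetic.

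The one genuine divergence is in the converse direction of $\Comp(\N H)=\fC$. Your argument takes $\tilde{\K}\in\Comp(\N H)\setminus\fC$, observes $\tilde{\K}\in\Comp(\L)$ so $\tilde{\K}\subseteq\M$, and then asserts $C_\L(\N)\cap C_\L(H)=C_\L(\N H)$ to conclude $\tilde{\K}$ is abelian. That equality is not immediate in a partial group; what you actually need is the weaker inclusion $\M\subseteq C_\L(\N H)$, and this does follow once you note (as the paper does earlier in the proof) that $C_\L(\M)$ is a partial subgroup, whence $\N H\subseteq C_\L(\M)$, and then invoke the symmetry lemma \cite[Lemma~3.5]{Henke:Regular}. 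So your argument can be completed. The paper instead argues that such a $\tilde{\K}$ centralizes $S\cap\N$, hence lies in $N_{\N H}(S\cap\N)=N_{\N H}(T^*)$, and is therefore a subnormal subgroup of the characteristic-$p$ group $N_\L(T^*)$; this forces $\tilde{\K}$ itself to have characteristic $p$, contradicting quasisimplicity. The paper's route avoids the centralizer bookkeeping at the cost of invoking the characteristic-$p$ machinery; yours is more direct once the inclusion $\M\subseteq C_\L(\N H)$ is properly justified.
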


\begin{proof}
By Lemma~\ref{L:2b}, $\hat{\H}:=E(\L)H=HE(\L)$ is a partial subnormal subgroup of $\L$ with $E(\L)=E(\hat{\H})$. We argue now that
\begin{equation}\label{E:NunlhdFstarH}
\N\unlhd \hat{\H}.
\end{equation}
For that let $x\in E(\L)$, $h\in H$ and $n\in \N$ with $u:=((xh)^{-1},n,(xh))\in\D$. By \cite[Lemma~1.4(f)]{Chermak:2015}, $(xh)^{-1}=h^{-1}x^{-1}$. Set $v:=(h^{-1},x^{-1},n,x,h)$. It follows from Lemma~\ref{L:1} that $S_v\cap T^*=S_u\cap T^*$. Now \eqref{E:SwcapTstar} yields first $S_v\cap T^*=S_u\cap T^*\in\Delta$ and then $v\in\D$. Hence,
\[n^{xh}=\Pi(u)=\Pi(v)=(n^x)^h.\]
By \cite[Proposition~11.7]{Henke:Regular}, $\N\unlhd F^*(\L)\supseteq E(\L)$ and thus $n^x\in\N$. As $\fC$ is by assumption $H$-invariant and since $H$ induces automorphisms of $\L$ via conjugation (cf. Lemma~\ref{L:NLT*actsComp}), it follows that $\N$ is invariant under conjugation by $H$. Thus,  $n^{xh}=(n^x)^h\in\N$. This proves \eqref{E:NunlhdFstarH}.

\smallskip

Since $\hat{\H}$ is a subnormal, it is a regular locality with Sylow subgroup $S_0:=S\cap \hat{\H}$. Note that $H\cap S_0=H\cap S\in\Syl_p(H)$, as $H\subn N_\L(T^*)$ and $S\in\Syl_p(N_\L(T^*))$. Moreover, since $E(\hat{\H})=E(\L)$, it follows from  Lemma~\ref{L:NLT*} that $N_{\hat{\H}}(F^*(\hat{\H})\cap S)=N_{\hat{\H}}(E(\hat{\H})\cap S)=N_{\hat{\H}}(E(\L)\cap S)=N_{\hat{\H}}(T^*)$ and thus $H\subn N_{\hat{\H}}(F^*(\hat{\H})\cap S)$. The property \eqref{E:NunlhdFstarH} allows us now to apply Lemma~\ref{L:2a} with $\hat{\H}$ in place of $\L$ to obtain that
\[\H:=\N H=H\N\]
is a partial subgroup of $\L$ with $\N H\cap S=\N H\cap S_0=(\N\cap S_0)(H\cap S_0)=(\N\cap S)(H\cap S)$. We show next that
\begin{equation}\label{E:HunlhdhH}
\H\unlhd \hat{\H}\mbox{ and }\H\subn\L.
\end{equation}
As $\hH\subn\L$, it is indeed sufficient to prove that $\H\unlhd\hH$. For the proof fix $a\in \H$ and $f\in\hat{\H}$ such that $w:=(f^{-1},a,f)\in\D$. As $\H$ is a partial subgroup, we only need to prove that $a^f\in \H$. Write $f=yh$ with $y\in E(\L)$ and $h\in H$. Then \cite[Lemma~1.4(f)]{Chermak:2015} gives $f^{-1}=h^{-1} y^{-1}$. Setting $w':=(h^{-1},y^{-1},a,y,h)$, it follows from Lemma~\ref{L:1} that $S_{w'}\cap T^*=S_w\cap T^*$. Hence, \eqref{E:SwcapTstar} gives first $S_w\cap T^*\in\Delta$ and then $w'\in\D$. Using the axioms of a partial group, we can thus conclude that
\[a^f=\Pi(w)=\Pi(w')=(a^y)^h.\]
By part (a) of Lemma~\ref{L:ComponentProducts}, there exist $n\in\N$ and $m\in\M$ with $y=nm$. Moreover, part (b) of that lemma yields then
$S_y\cap T^*=S_{nm}\cap T^*=S_{(n,m)}\cap T^*$. So by \eqref{E:SwcapTstar}, we have $(m^{-1},n^{-1},a,n,m)\in\D$ via $S_{(y^{-1},a,y)}\cap T^*$ and thus
\[a^y=(a^n)^m.\]
Notice that $a,n\in \N H=\H$ and thus $a^n\in \H$. Lemma~\ref{L:ComponentProducts}(b) gives that $\N\subseteq C_\L(\M)$. By assumption we have moreover $H\subseteq C_\L(\M)$. As $\M\unlhd F^*(\L)$ is subnormal in $\L$ by \cite[Proposition~11.7]{Henke:Regular}, it follows from \cite[Theorem~A(f)]{Henke:NormalizerSubnormal} that $C_\L(\M)$ is a partial subgroup of $\L$. Hence, $\H=\N H\subseteq C_\L(\M)$ and thus $\M\subseteq C_\L(\H)$ by \cite[Lemma~3.5]{Henke:Regular}. It follows that $a^y=(a^n)^m=a^n\in\H$. Recall that $h\in H\subseteq\H$. Hence, $a^f=(a^y)^h\in \H$. This completes the proof of \eqref{E:HunlhdhH}. We prove next that
\begin{equation}\label{E:NHofScapN}
N_\H(S\cap \N)=N_\N(S\cap \N)H\mbox{ and }N_\H(S\cap\N)=N_\H(T^*)\subn N_\L(T^*)
\end{equation}
To see this notice that $H\subseteq N_\H(T^*)\subseteq N_\H(T^*\cap\N)=N_\H(S\cap \N)$ as $\N\unlhd\hH\supseteq\H$ by \eqref{E:NunlhdFstarH}. Hence, by the Dedekind Lemma \cite[Lemma~1.10]{Chermak:2015}, we have $N_\H(S\cap \N)=N_\N(S\cap \N)H$. Applying Lemma~\ref{L:NHT*} with $\N$ in place of $\H$ and noting that $\N=E(\N)$, we obtain $N_\N(S\cap\N)=N_\N(T^*)$. Hence, $N_\H(S\cap \N)\leq N_\H(T^*)$. This shows  $N_\H(S\cap\N)=N_\H(T^*)$. As $\H\subn\L$, one sees easily that $N_\H(T^*)\subn N_\L(T^*)$. This completes the proof of \eqref{E:NHofScapN}. It remains now only to prove that
\begin{equation}\label{E:CompH}
\Comp(\H)=\fC.
\end{equation}
Note first that, by \cite[Remark~11.2(a)]{Henke:Regular}, we have $\fC\subseteq\Comp(\H)$. Assuming that \eqref{E:CompH} is false, there exists thus $\K\in\Comp(\H)\backslash\fC$. Then $\K$ centralizes $\N$ by \cite[Theorem~11.18(e)]{Henke:Regular}. In particular, $\K\subseteq C_\H(S\cap\N)\subseteq N_\H(S\cap\N)$. Using  \eqref{E:NHofScapN} and the fact that $\K$ is subnormal in $\H$, we see then that $\K\subn N_\H(S\cap\N)\subn N_\L(T^*)$ and so $\K$ is a subnormal subgroup of the group $N_\L(T^*)$. As $N_\L(T^*)$ is a group of characteristic $p$, it follows from \cite[Lemma~1.2(a)]{MS:2012b} that $\K$ is a group of characteristic $p$. As $\K$ is quasisimple, \cite[Lemma~7.10]{Henke:Regular} gives $Z(\K)=O_p(\K)\neq \K$. This yields a contradiction. Hence, \eqref{E:CompH} holds. In particular, $E(\H)=\N$ and the proof is complete.
\end{proof}

\subsection{The proof of Theorem~\ref{T:RegularLocalities} and related results}

In this subsection we prove Theorem~\ref{T:RegularLocalities} as well as some additional properties. Except in Corollary~\ref{C:RegularLocalities}, we assume throughout this subsection the following hypothesis.

\begin{hypo}\label{H:Regular}
Let $(\L,\Delta,S)$ be a regular locality and set $T^*:=F^*(\L)\cap S$. Let $\H_1$ and $\H_2$ be partial subnormal subgroups of $\L$. Set
\[\H:=\<\H_1,\H_2\>,\]
\[S_i:=\H_i\cap S,\;T_i=E(\H_i)\cap S\mbox{ and }H_i:=N_{\H_i}(T_i)\mbox{ for }i=1,2.\]
Set moreover
\[H:=\<H_1,H_2\>,\;\fC:=\Comp(\H_1)\cup\Comp(\H_2),\;\N:=\prod_{\K\in\fC}\K\mbox{ and }\M:=\prod_{\K\in\Comp(\L)\backslash\fC}\K.\]
\end{hypo}

It follows from \cite[Remark~11.2(b)]{Henke:Regular} that the $\fC\subseteq\Comp(\L)$. In particular, $T_i\leq E(\L)\cap S\leq T^*$ for $i=1,2$. Moreover, as remarked before, $\N$ and $\M$ are well-defined by \cite[Proposition~11.7]{Henke:Regular} (i.e. the order of the factors in these products does not matter). We will use these properties throughout without further reference.

\begin{lemma}\label{L:ab}
We have $H_i=N_{\H_i}(T^*)\subn N_\L(T^*)$ and $S_i=H_i\cap S$. In particular,
\[H=\<H_1,H_2\>\subn N_\L(T^*)\mbox{ and }H\cap S=\<S_1,S_2\>.\]
\end{lemma}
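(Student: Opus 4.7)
The plan is to reduce every assertion to the classical Wielandt theorem and to Meierfrankenfeld's Lemma~\ref{L:Meierfrankenfeld} applied inside the honest finite group $N_\L(T^*)$. The key input enabling this reduction is Lemma~\ref{L:NHT*}, which tells us that the $T^*$-normalizer of a partial subnormal subgroup of $\L$ is already a genuine subnormal subgroup of $N_\L(T^*)$.

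First I would apply Lemma~\ref{L:NHT*} to each $\H_i$. Since $T_i=E(\H_i)\cap S$, the lemma gives
\[H_i=N_{\H_i}(T_i)=N_{\H_i}(E(\H_i)\cap S)=N_{\H_i}(T^*)\subn N_\L(T^*),\]
together with $S\cap\H_i=S\cap N_{\H_i}(T^*)$. Because $S_i=S\cap\H_i$, this simultaneously yields the identification $H_i=N_{\H_i}(T^*)$ and the equality $S_i=H_i\cap S$.

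Next I would observe that $N_\L(T^*)$ is an actual subgroup of $\L$ (indeed a group of characteristic $p$), because $T^*\in\Delta$, and that $S\le N_\L(T^*)$ because $T^*$ is strongly closed in $S$ and hence normalized by $S$. Since $S$ is a maximal $p$-subgroup of $\L$, it is in particular a Sylow $p$-subgroup of the finite group $N_\L(T^*)$. With $H_1$ and $H_2$ now identified as subnormal subgroups of $N_\L(T^*)$, Wielandt's classical theorem yields $H=\<H_1,H_2\>\subn N_\L(T^*)$, and Meierfrankenfeld's Lemma~\ref{L:Meierfrankenfeld}, applied to $N_\L(T^*)$ with Sylow $p$-subgroup $S$, gives
\[H\cap S=\<H_1\cap S,H_2\cap S\>=\<S_1,S_2\>.\]

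There is essentially no deep obstacle here: once Lemma~\ref{L:NHT*} is available, the statement is a bookkeeping consequence of it together with the two classical group-theoretic inputs. The one small point requiring attention is the identification $N_{\H_i}(T_i)=N_{\H_i}(T^*)$, which is exactly the first assertion of Lemma~\ref{L:NHT*} applied with $\H=\H_i$ (using $T_i=E(\H_i)\cap S$ and the fact that $E(\H_i)\subseteq F^*(\L)$, so that $E(\H_i)\cap S=E(\H_i)\cap T^*$).
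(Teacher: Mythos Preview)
Your proposal is correct and follows essentially the same route as the paper's own proof: invoke Lemma~\ref{L:NHT*} for each $\H_i$ to obtain $H_i=N_{\H_i}(T^*)\subn N_\L(T^*)$ and $S_i=H_i\cap S$, then apply Wielandt's Theorem and Meierfrankenfeld's Lemma inside the finite group $N_\L(T^*)$. The extra remarks you make (that $T^*\in\Delta$ so $N_\L(T^*)$ is a genuine group, and that $S\in\Syl_p(N_\L(T^*))$) are implicit in the paper's argument and are correct.
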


\begin{proof}
Lemma~\ref{L:NHT*} implies that $H_i=N_{\H_i}(T^*)\subn N_\L(T^*)$ and $S_i=S\cap H_i$ for each $i=1,2$. In particular, it follows from Wielandt's Join Theorem for groups that $H=\<H_1,H_2\>$ is a subnormal subgroup of $N_\L(T^*)$ and from  Meierfrankenfeld's Lemma~\ref{L:Meierfrankenfeld} that $H\cap S=\<S_1,S_2\>$.
\end{proof}

Recall that $N_\L(T^*)$ (and thus also $H$) acts on the set of components of $\L$ by Lemma~\ref{L:NLT*actsComp}.

\begin{lemma}\label{L:CHinvariant}
The set $\fC$ is $H$-invariant.
\end{lemma}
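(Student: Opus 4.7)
The plan is to reduce the $H$-invariance of $\fC$ to invariance under the two generators $H_1$ and $H_2$ of $H$. By the symmetry in the indices, it suffices to show $\fC^f\subseteq\fC$ for every $f\in H_1$; since $f\in H_1\leq N_\L(T^*)$, Lemma~\ref{L:NLT*actsComp} gives that $\D(f)=\L$ and that $c_f$ is an automorphism of $\L$ permuting $\Comp(\L)$. Combined with the invertibility of $c_f$ on $\Comp(\L)$, an inclusion $\fC^f\subseteq\fC$ will upgrade to equality $\fC^f=\fC$.

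Given $\K\in\fC$, I would distinguish two cases according to whether $\K\in\Comp(\H_1)$. If $\K\in\Comp(\H_1)$, then $\K\subseteq\H_1$, and because $\H_1$ is a partial subgroup of $\L$ closed under $c_f$ (using $f\in\H_1$ and $\D(f)=\L$), the restriction of $c_f$ to $\H_1$ is an automorphism of the partial group $\H_1$. This automorphism permutes the intrinsically defined set $\Comp(\H_1)$, so $\K^f\in\Comp(\H_1)\subseteq\fC$. Otherwise $\K\in\Comp(\H_2)\setminus\Comp(\H_1)$, and then $\K$ is one of the factors in the product $\M_1':=\prod_{\K'\in\Comp(\L)\setminus\Comp(\H_1)}\K'$. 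The argument already employed in the proof of Lemma~\ref{L:NHT*} (relying on \cite[Theorem~11.18(c)]{Henke:Regular} and \cite[Lemma~4.8]{Henke:Regular}) shows that $\H_1\subseteq C_\L(\M_1')\subseteq C_\L(\K)$, so $f\in H_1\subseteq\H_1$ centralizes $\K$ and hence $\K^f=\K\in\Comp(\H_2)\subseteq\fC$.

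The delicate point is the second case: $H_1$ need not normalize the partial subnormal subgroup $\H_2$, so a priori a component of $\H_2$ could be sent to a component outside $\fC$ by conjugation. What rescues the argument is the centralizer property coming from the theory of regular localities, which forces $\H_1$ to fix pointwise every component of $\L$ that is not itself among $\Comp(\H_1)$; such a component therefore cannot escape $\fC$. Combining the two cases with the symmetric statement for $f\in H_2$ yields $\fC^f\subseteq\fC$ for every $f\in H_1\cup H_2$, and thus for every $f\in H=\<H_1,H_2\>$, completing the proof.
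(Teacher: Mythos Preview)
Your proof is correct and follows essentially the same strategy as the paper's: reduce to invariance under each $H_i$, observe that $H_i$ permutes $\Comp(\H_i)$ (via the action by automorphisms coming from Lemma~\ref{L:NLT*actsComp}), and use the centralizer property of $\H_i$ on components in $\Comp(\L)\setminus\Comp(\H_i)$ to handle the remaining elements of $\fC$. The only cosmetic differences are that the paper applies Lemma~\ref{L:NLT*actsComp} with $\H_i$ in place of $\L$ (rather than restricting the $\L$-automorphism $c_f$ to $\H_i$ as you do), and it cites \cite[11.17]{Henke:Regular} together with \cite[Lemma~3.5]{Henke:Regular} for the centralizer statement, whereas you invoke the argument via \cite[Theorem~11.18(c)]{Henke:Regular} and \cite[Lemma~4.8]{Henke:Regular} already used in Lemma~\ref{L:NHT*}.
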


\begin{proof}
It is sufficient to argue that $\fC$ is $H_i$-invariant for each $i=1,2$. For the proof fix $i\in\{1,2\}$. By Lemma~\ref{L:NLT*actsComp} (applied with $\H_i$ in place of $\L$), $\Comp(\H_i)$ is $H_i$-invariant. Moreover, applying first  \cite[11.17]{Henke:Regular} and then  \cite[Lemma~3.5]{Henke:Regular}, one sees that $\H_i\subseteq C_\L(\K)$ for every $\K\in\Comp(\L)\backslash\Comp(\H_i)$. In particular, $H_i$ centralizes every component in $\fC\backslash \Comp(\H_i)$. Thus $\fC$ is $H_i$-invariant.
 \end{proof}

\begin{lemma}\label{L:HinCM}
We have $\H\subseteq C_\L(\M)$ and so $H\subseteq C_\L(\M)$.
\end{lemma}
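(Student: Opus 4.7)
The plan is to first establish $\H_i \subseteq C_\L(\M)$ for $i = 1, 2$, and then conclude $\H \subseteq C_\L(\M)$ by observing that $C_\L(\M)$ is a partial subgroup of $\L$ while $\H = \langle \H_1, \H_2 \rangle$ is by definition the smallest partial subgroup of $\L$ containing $\H_1 \cup \H_2$. The second assertion $H \subseteq C_\L(\M)$ then follows immediately from $H_i \subseteq \H_i \subseteq \H$, which gives $H = \langle H_1, H_2 \rangle \subseteq \H$.

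That $C_\L(\M)$ is a partial subgroup of $\L$ is obtained exactly as in the proof of Lemma~\ref{L:3}: by \cite[Proposition~11.7]{Henke:Regular}, $\M$ is a partial normal subgroup of $F^*(\L)$, hence subnormal in $\L$, and so \cite[Theorem~A(f)]{Henke:NormalizerSubnormal} applies.

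For the inclusions $\H_i \subseteq C_\L(\M)$, I would fix $i \in \{1,2\}$ and argue as follows. Since $\H_i$ is itself partial subnormal in $\L$, \cite[Theorem~A(f)]{Henke:NormalizerSubnormal} also gives that $C_\L(\H_i)$ is a partial subgroup of $\L$. For every $\K \in \Comp(\L) \setminus \fC \subseteq \Comp(\L) \setminus \Comp(\H_i)$, the argument reproduced in the proof of Lemma~\ref{L:CHinvariant} (combining \cite[11.17]{Henke:Regular} with \cite[Lemma~3.5]{Henke:Regular}) shows $\K \subseteq C_\L(\H_i)$. Enumerating the components in $\Comp(\L) \setminus \fC$ as $\K_1, \dots, \K_r$ and applying Lemma~\ref{L:ComponentProducts}(a) iteratively, every element of $\M$ arises as a defined product $k_1 k_2 \cdots k_r$ with $k_j \in \K_j$. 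Since $C_\L(\H_i)$ is a partial subgroup containing each $\K_j$, closure under defined products forces $\M \subseteq C_\L(\H_i)$. One final appeal to \cite[Lemma~3.5]{Henke:Regular} flips this to $\H_i \subseteq C_\L(\M)$.

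The only step that requires a moment's care is the passage from the pointwise statement $\K \subseteq C_\L(\H_i)$ for each $\K \in \Comp(\L) \setminus \fC$ to the global statement $\M \subseteq C_\L(\H_i)$; this rests precisely on the product decomposition of $\M$ furnished by Lemma~\ref{L:ComponentProducts}(a), together with the defining closure property of a partial subgroup. With that in hand, the proof collapses to the three short steps above.
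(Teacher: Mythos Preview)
Your proposal is correct and follows the same overall architecture as the paper: first argue that $C_\L(\M)$ is a partial subgroup (via \cite[Proposition~11.7]{Henke:Regular} and \cite[Theorem~A(f)]{Henke:NormalizerSubnormal}), then reduce to showing $\H_i\subseteq C_\L(\M)$ for $i=1,2$.

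The only noteworthy difference is in how you verify $\H_i\subseteq C_\L(\M)$. The paper invokes \cite[Theorem~11.18(c)]{Henke:Regular} to see that $\H_i\M$ is an internal central product of $\{\H_i\}\cup(\Comp(\L)\backslash\fC)$, and then \cite[Lemma~4.8]{Henke:Regular} gives $\H_i\subseteq C_\L(\M)$ in one stroke. You instead work componentwise: you show each $\K\in\Comp(\L)\backslash\fC$ lies in $C_\L(\H_i)$ (via \cite[11.17]{Henke:Regular} and the symmetry of \cite[Lemma~3.5]{Henke:Regular}), then use the product decomposition of $\M$ from Lemma~\ref{L:ComponentProducts}(a) together with the closure of the partial subgroup $C_\L(\H_i)$ to obtain $\M\subseteq C_\L(\H_i)$, and finally flip once more with \cite[Lemma~3.5]{Henke:Regular}. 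This is a legitimate alternative; it trades the single citation of the central-product theorem for a short inductive unpacking of the product $\M$, and has the mild advantage of reusing exactly the ingredients already appearing in Lemma~\ref{L:CHinvariant} and Lemma~\ref{L:ComponentProducts}.
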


\begin{proof}
By \cite[Proposition~11.7]{Henke:Regular}, $\M\unlhd F^*(\L)$ is subnormal in $\L$. Hence, it follows from \cite[Theorem~A(f)]{Henke:NormalizerSubnormal} that $C_\L(\M)$ is a partial subgroup of $\L$. Therefore, it is sufficient to show that $\H_i\subseteq C_\L(\M)$ for each $i=1,2$.

\smallskip

Fix now $i\in\{1,2\}$ and notice that $\fC':=\Comp(\L)\backslash\fC\subseteq \Comp(\L)\backslash\Comp(\H_i)$. Hence, it is a special case of \cite[Theorem~11.18(c)]{Henke:Regular} that $\H_i\M$ is an internal central product of the elements of $\{\H_i\}\cup\fC'$ (in the sense defined in \cite[Definition~4.1]{Henke:Regular}). In particular, by \cite[Lemma~4.8]{Henke:Regular}, we have $\H_i\subseteq C_\L(\M)$ as required.
\end{proof}

Note that Theorem~\ref{T:RegularLocalities} is implied by the following theorem.

\begin{theorem}\label{T:c}
We have $\H:=\<\H_1,\H_2\>=\N H=H\N\subn\L$. Moreover, $\H\cap S=H\cap S=\<S_1,S_2\>$, $\Comp(\H)=\fC$, $E(\H)=\N=E(\H_1)E(\H_2)\unlhd \H$ and
\[N_\H(F^*(\H)\cap S)=N_\H(\N\cap S)=N_\H(T^*)=H.\]
\end{theorem}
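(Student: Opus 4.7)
The strategy is to funnel everything through Lemma~\ref{L:3}. My first step is to verify its hypotheses for the pair $(H,\fC)$ of Hypothesis~\ref{H:Regular}: $H\subn N_\L(T^*)$ is Lemma~\ref{L:ab}, the $H$-invariance of $\fC$ is Lemma~\ref{L:CHinvariant}, and $H\subseteq C_\L(\M)$ is Lemma~\ref{L:HinCM}. Lemma~\ref{L:3} then delivers in one stroke that $\N H=H\N\subn\L$, that $\N H\cap S=(\N\cap S)(H\cap S)$, that $\Comp(\N H)=\fC$ and $E(\N H)=\N\unlhd \N H$, and that $N_{\N H}(\N\cap S)=N_\N(T^*)H$.

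The core remaining step is the identification $\<\H_1,\H_2\>=\N H$. For $\<\H_1,\H_2\>\subseteq \N H$, I would transfer the Frattini identity $\L=N_\L(T^*)E(\L)$ (established in the proof of Lemma~\ref{L:2b} via \cite[Corollary~3.11]{Chermak:2015}) to the regular-locality structure on each $\H_i$ supplied by Theorem~\ref{T:RegularSubnormal}. Combined with Lemma~\ref{L:ab} (which identifies the intrinsic normalizer with $H_i$), this yields the Frattini decomposition $\H_i=H_iE(\H_i)$. Since $E(\H_i)\subseteq \N$ and $H_i\subseteq H$, we conclude $\H_i\subseteq \N H$, and as $\N H$ is a partial subgroup, $\<\H_1,\H_2\>\subseteq \N H$. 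The reverse containment is direct: $\N=E(\H_1)E(\H_2)$ by Lemma~\ref{L:ComponentProducts}(a), and both $E(\H_i)$ and $H=\<H_1,H_2\>$ lie in $\<\H_1,\H_2\>$.

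With $\H=\N H$, most of the remaining conclusions transfer immediately from the first paragraph: $\H\subn\L$, $\Comp(\H)=\fC$, $E(\H)=\N\unlhd\H$; together with Lemma~\ref{L:ComponentProducts}(a) this also gives $E(\H)=E(\H_1)E(\H_2)$. For $\H\cap S$, Lemma~\ref{L:3} gives $(\N\cap S)(H\cap S)$; Lemma~\ref{L:ComponentProducts}(a) identifies $\N\cap S=T_1T_2$, and since each $T_i\leq H_i\leq H$ this simplifies to $H\cap S=\<S_1,S_2\>$ via Lemma~\ref{L:ab}. For the normalizer chain, Lemma~\ref{L:NHT*} applied to $\H$ (using $E(\H)=\N$) gives the middle equality $N_\H(\N\cap S)=N_\H(T^*)$; then combining Lemma~\ref{L:3}'s $N_\H(\N\cap S)=N_\N(T^*)H$ with Lemma~\ref{L:ComponentProducts}(c) --- which gives $N_\N(T^*)=N_{E(\H_1)}(T^*)N_{E(\H_2)}(T^*)\subseteq H_1H_2\subseteq H$ --- yields $N_\H(T^*)=H$. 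For the leftmost equality, I would argue $F^*(\H)\subseteq F^*(\L)$ (from $E(\H)\subseteq E(\L)$ and the standard subnormal-chain argument $O_p(\H)\leq O_p(\L)$), so $F^*(\H)\cap S\subseteq T^*$; then $N_\H(\N\cap S)$ normalizes both $\N\cap S$ and $O_p(\H)\unlhd\H$, hence their product $F^*(\H)\cap S$, while conversely any $f\in N_\H(F^*(\H)\cap S)$ maps $E(\H)\cap S\subseteq F^*(\H)\cap S$ into itself because $f$ preserves $E(\H)\unlhd\H$ and the image stays in $S$.

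The main obstacle I anticipate is the Frattini decomposition $\H_i=H_iE(\H_i)$: one must invoke Theorem~\ref{T:RegularSubnormal} to equip each $\H_i$ with its own regular-locality structure, transport the Frattini argument of Lemma~\ref{L:2b}, and then use Lemma~\ref{L:ab} to reconcile the intrinsic normalizer with the ambient $H_i$. The equality $N_\H(F^*(\H)\cap S)=N_\H(\N\cap S)$ is a secondary sticking point that relies on $F^*(\H)\subseteq F^*(\L)$, which is what allows elements of $H\subseteq N_\L(T^*)$ to preserve $F^*(\H)\cap S$ automatically.
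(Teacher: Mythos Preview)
Your proposal is correct and follows essentially the same route as the paper: verify the hypotheses of Lemma~\ref{L:3} via Lemmas~\ref{L:ab}, \ref{L:CHinvariant}, \ref{L:HinCM}, extract all the structural conclusions for $\N H$, identify $\H=\N H$ via the Frattini decomposition $\H_i=E(\H_i)H_i$, and then read off the remaining assertions using Lemma~\ref{L:ComponentProducts} and Lemma~\ref{L:NHT*}.

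The one place where you work harder than necessary is the leftmost equality $N_\H(F^*(\H)\cap S)=N_\H(\N\cap S)$. You propose a direct argument requiring $F^*(\H)\subseteq F^*(\L)$ and $O_p(\H)\leq O_p(\L)$. The paper instead simply applies Lemma~\ref{L:NLT*} to the regular locality $\H$ (legitimate by Theorem~\ref{T:RegularSubnormal}): since $E(\H)=\N$, that lemma gives $N_\H(F^*(\H)\cap S)=N_\H(E(\H)\cap S)=N_\H(\N\cap S)$ in one line, with no need to compare $F^*(\H)$ with $F^*(\L)$. Your direct argument is not wrong, but it is essentially reproving Lemma~\ref{L:NLT*} inside $\H$, and the detour through $F^*(\H)\subseteq F^*(\L)$ is unnecessary for this step.
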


\begin{proof}
Recall that $H\subn N_\L(T^*)$ by Lemma~\ref{L:ab}, that $\fC$ is $H$-invariant by Lemma~\ref{L:CHinvariant} and that $H\subseteq C_\L(\M)$ by Lemma~\ref{L:HinCM}. Hence, it follows from Lemma~\ref{L:3} that $\N H=H\N$ is a partial subnormal subgroup of $\L$,
\[\Comp(\N H)=\fC,\;E(\N H)=\N\unlhd\N H,\;(\N H)\cap S=(\N\cap S)(H\cap S)\mbox{ and }N_{\N H}(\N\cap S)=N_\N(T^*)H.\]
It follows from the Frattini Lemma \cite[Corollary~3.11]{Chermak:2015} that $\H_i=E(\H_i)H_i\subseteq \N H$ for every $i\in\{1,2\}$. So the fact that $\N H$ is a partial subgroup implies $\H:=\<\H_1,\H_2\>\subseteq \N H$. As $\N$ and $H$ are both contained in $\H$, we can conclude that $\H=\N H=H\N$. In particular, $\H$ is subnormal in $\L$,
\[\Comp(\H)=\fC,\;E(\H)=\N\unlhd\H,\;\H\cap S=(\N\cap S)(H\cap S)\mbox{ and }N_\H(\N\cap S)=N_\N(T^*)H.\]
By Lemma~\ref{L:ab}, $H\cap S=\<S_1,S_2\>$. It follows moreover from Lemma~\ref{L:ComponentProducts}(a) that
\[\N\cap S=(E(\H_1)\cap S)(E(\H_2)\cap S)\leq \<S_1,S_2\>=H\cap S.\] Hence, $\H\cap S=(\N\cap S)(H\cap S)=H\cap S=\<S_1,S_2\>$. Notice that Lemma~\ref{L:ComponentProducts}(a) yields also that $\N=E(\H_1)E(\H_2)$, i.e. $E(\H)=\N=E(\H_1)E(\H_2)$.

\smallskip

As $\N=E(\H)$, it follows from Lemma~\ref{L:NLT*} that $N_\H(S\cap F^*(\H))=N_\H(S\cap\N)$ and from Lemma~\ref{L:NHT*} that $N_\H(\N\cap S)=N_\H(T^*)$.  It remains thus only to show that $N_\H(T^*)=H$.
By Lemma~\ref{L:ComponentProducts}(c), we have $N_\N(T^*)=N_{E(\H_1)}(T^*)N_{E(\H_2)}(T^*)$. Note that $N_{E(\H_i)}(T^*)\leq N_{\H_i}(T^*)=H_i$ for each $i=1,2$ by Lemma~\ref{L:ab}. Thus, it follows that $N_\N(T^*)\leq \<H_1,H_2\>=H$. Using the properties above, we obtain therefore that $N_\H(T^*)=N_\H(\N\cap S)=N_\N(T^*)H=H$. This completes the proof.
\end{proof}

\begin{lemma}\label{L:d}
Set $T:=\H\cap S=H\cap S$. Then
\[\F_T(\H)=\<\F_T(\N T),\F_T(H)\>.\]
\end{lemma}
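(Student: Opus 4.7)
The proof will be short because the heavy lifting has already been done in Theorem~\ref{T:c} and Lemma~\ref{L:SplitFSL}. The plan is to view $\H$ itself as a regular locality and apply the Frattini-style splitting result for fusion systems to it.

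First, by Theorem~\ref{T:RegularSubnormal}, the triple $(\H,\delta(\F_T(\H)),T)$ is a regular locality, since $\H\subn\L$ by Theorem~\ref{T:c}. In particular $\H$ is a locality with Sylow $p$-subgroup $T=\H\cap S$, and Lemma~\ref{L:SplitFSL} applies to it. Theorem~\ref{T:c} also gives that $\N=E(\H)$ is a partial normal subgroup of $\H$, so $\N$ is an admissible choice for the partial normal subgroup in that lemma.

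Next I would apply Lemma~\ref{L:SplitFSL}(b) to $(\H,\delta(\F_T(\H)),T)$ with the partial normal subgroup $\N\unlhd\H$. Since $\N\subseteq\H$, the intersection of $\N$ with the Sylow $p$-subgroup $T$ of $\H$ is
\[T\cap\N=(\H\cap S)\cap\N=\N\cap S.\]
Lemma~\ref{L:SplitFSL}(b) thus yields
\[\F_T(\H)=\<\F_T(\N T),\;\F_T(N_\H(\N\cap S))\>.\]

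Finally, to finish I would invoke the explicit identification of the normalizer given at the end of Theorem~\ref{T:c}, namely
\[N_\H(\N\cap S)=N_\H(F^*(\H)\cap S)=H,\]
which substitutes directly into the displayed equation to yield $\F_T(\H)=\<\F_T(\N T),\F_T(H)\>$. There is essentially no obstacle here: every nontrivial fact required has already been established (Theorem~\ref{T:c} supplies the structure of $\H$ and the identification of $N_\H(\N\cap S)$ with $H$, while Lemma~\ref{L:SplitFSL} gives the generation statement for the ambient fusion system). The only minor verification to record is that $\N T\cap S=T$ and $H\cap S=T$ so that both $\F_T(\N T)$ and $\F_T(H)$ really are fusion systems over $T$; the first follows since $T\subseteq\N T\subseteq\H$ and $\H\cap S=T$, and the second is part of Theorem~\ref{T:c}.
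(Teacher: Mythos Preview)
Your proof is correct and follows essentially the same approach as the paper's own proof: both invoke Theorem~\ref{T:c} for $\N\unlhd\H\subn\L$ and $N_\H(\N\cap S)=H$, then apply Theorem~\ref{T:RegularSubnormal} to view $\H$ as a regular locality, and conclude via Lemma~\ref{L:SplitFSL}(b). The paper's version is simply more terse, omitting the explicit verification that $T\cap\N=\N\cap S$ and the closing remark about both subsystems being over $T$.
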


\begin{proof}
Recall that $\N\unlhd \H\subn\L$ and $N_\H(S\cap \N)=H$ by Theorem~\ref{T:c}. In particular, by Theorem~\ref{T:RegularSubnormal}, $\H$ supports the structure of a regular locality. Hence, the assertion follows from Lemma~\ref{L:SplitFSL}(b) applied with $\H$ in place of $\L$.
\end{proof}

When we show Theorem~\ref{T:FusionSystems} using Theorem~\ref{T:RegularLocalities}, we need the following lemma. Note that its proof relies on Theorem~\ref{T:Groups}.

\begin{lemma}\label{L:ApplyThmC}
Assume that $T:=S\cap \H\leq N_S(\H_1)\cap N_S(\H_2)$. Then
\[\F_T(\H)=\<\F_T(\H_1T),\F_T(\H_2T)\>.\]
\end{lemma}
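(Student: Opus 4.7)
The nontrivial inclusion is $\F_T(\H)\subseteq\<\F_T(\H_1T),\F_T(\H_2T)\>$, the reverse being immediate from $\H_iT\subseteq\H$ (here using the hypothesis to make $\H_iT$ a partial subgroup). The plan is to peel off an $E(\H_1)$-factor and then an $E(\H_2)$-factor from any element of $\H$ by two successive applications of the Frattini Lemma (Lemma~\ref{L:SplitFSL}(a)), and then to dispatch the remaining ``group part'' via Theorem~\ref{T:Groups} applied to the finite group $N_\L(T^*)$.

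First I would establish that $E(\H_1)$ and $E(\H_2)$ are partial normal subgroups of $\H$. Since $\H=\N H$ by Theorem~\ref{T:c}, it suffices to check that both $\N$ and $H$ normalize $E(\H_i)$. For $\N$: applying Lemma~\ref{L:ComponentProducts}(b) with $\fC_1:=\Comp(\H_i)$ and $\fC_2:=\Comp(\H_{3-i})\setminus\Comp(\H_i)$ shows $E(\H_{3-i})$ decomposes as a subproduct of $E(\H_i)$ (the shared components) times a centralizer of $E(\H_i)$ (the part in $\fC_2$), so $E(\H_{3-i})\subseteq N_\L(E(\H_i))$. For $H$: by Lemma~\ref{L:NLT*actsComp}, $H\le N_\L(T^*)$ acts on $\Comp(\L)$, and arguing as in Lemma~\ref{L:CHinvariant}, both $H_i$ (permuting its own components) and $H_{3-i}$ (centralizing components of $\H_i$ not in $\Comp(\H_{3-i})$, permuting the shared ones inside $\H_{3-i}$) preserve $\Comp(\H_i)$ as a set.

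Applying the Frattini Lemma in $\H$ with $E(\H_1)\unlhd\H$ yields, for each $g\in\H$, a factorization $g=e_1f$ with $e_1\in E(\H_1)$, $f\in N_\H(T_1)$ and $T_g=T_{(e_1,f)}$, where $T_1:=E(\H_1)\cap S$. The sub-partial group $N_\H(T_1)$ has Sylow $T$ (the hypothesis $T\le N_S(\H_1)$ makes $T$ normalize $T_1$), and $E(\H_2)$ is partial normal in it (combining $E(\H_2)\unlhd\H$ with $E(\H_2)\subseteq N_\L(E(\H_1))$ from above). A second Frattini splitting inside $N_\H(T_1)$ gives $f=e_2m$ with $e_2\in E(\H_2)$, $m\in N_{N_\H(T_1)}(T_2)$ and $T_f=T_{(e_2,m)}$, where $T_2:=E(\H_2)\cap S$. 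By Lemma~\ref{L:ComponentProducts}(a), $T_1T_2=\N\cap S$, hence
\[
m\in N_\H(T_1)\cap N_\H(T_2)\subseteq N_\H(T_1T_2)=N_\H(\N\cap S)=H,
\]
the last equality being part of Theorem~\ref{T:c}.

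Finally, by Lemma~\ref{L:ab}, $H_1$ and $H_2$ are subnormal subgroups of the finite group $N_\L(T^*)$ with $T=H\cap S=\<H_1\cap S,H_2\cap S\>$. Applying Theorem~\ref{T:Groups} yields
\[
\F_T(H)=\<\F_T(H_1T),\F_T(H_2T)\>\subseteq\<\F_T(\H_1T),\F_T(\H_2T)\>.
\]
Putting the pieces together, for any $g\in\H$ with $g=e_1e_2m$ as above and $T_g$ preserved through both splittings, $c_g|_{T_g}=c_{e_1}\circ c_{e_2}\circ c_m$, where $c_{e_i}$ is a morphism in $\F_T(\H_iT)$ (since $E(\H_i)\subseteq\H_i$) and $c_m$ is a morphism in $\F_T(H)$. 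As $\F_T(\H)$ is generated by such maps, the desired inclusion follows. The main obstacle is the careful verification that $E(\H_i)\unlhd\H$ and that the second Frattini splitting in the sub-locality $N_\H(T_1)$ delivers domain equality $T_f=T_{(e_2,m)}$ over the full Sylow $T$ (and not merely modulo $T^*$), which is what allows the final composition $c_{e_1}\circ c_{e_2}\circ c_m$ to be defined on all of $T_g$.
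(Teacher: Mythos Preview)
Your overall strategy---factor each $g\in\H$ as $e_1e_2m$ with $e_i\in E(\H_i)$ and $m\in H$, then invoke Theorem~\ref{T:Groups} on the group part---shares the key ingredient with the paper, but the second Frattini splitting as you set it up does not go through. The issue is that $E(\H_2)$ is in general \emph{not} contained in $N_\H(T_1)$: whenever $\K\in\Comp(\H_1)\cap\Comp(\H_2)$, one has $\K\subseteq E(\H_2)$ but elements of $\K$ do not normalise $\K\cap S\le T_1$. So $E(\H_2)$ cannot be a partial normal subgroup of $N_\H(T_1)$, and the Frattini/Splitting Lemma is unavailable there. Your justification ``$E(\H_2)\subseteq N_\L(E(\H_1))$'' only gives that $E(\H_2)$ normalises $E(\H_1)$, not its Sylow intersection $T_1$. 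A repair is possible: in the second step split off instead the product $\K_2$ of the components in $\Comp(\H_2)\setminus\Comp(\H_1)$, which centralises $E(\H_1)$ by Lemma~\ref{L:ComponentProducts}(b) and therefore lies in $N_\H(T_1)$; since $\K_2\subseteq\H_2$ the map $c_{e_2}$ is still in $\F_T(\H_2T)$, and since $\N=E(\H_1)\K_2$ one still arrives at $m\in N_\H(\N\cap S)=H$. You would also still owe the verification that $E(\H_1)\unlhd\H$ (stronger than what the paper actually needs) and that $N_\H(T_1)$ carries enough locality structure for Lemma~\ref{L:SplitFSL}(a) to apply.

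The paper's route avoids both of these obstacles by working at the level of fusion systems rather than element-by-element. It first uses Lemma~\ref{L:d} to obtain $\F_T(\H)=\langle\F_T(\N T),\F_T(H)\rangle$, and treats the two pieces separately: $\F_T(H)$ by Theorem~\ref{T:Groups} exactly as you do, and $\F_T(\N T)$ by proving the weaker (and more easily verified) statement $E(\H_i)\unlhd\N T$ and then splitting products inside the regular locality $\N T$. A final Frattini splitting in each $\H_iT$ (which is itself a regular locality under the hypothesis) gives $\F_T(\H_iT)=\langle\F_T(E(\H_i)T),\F_T(H_iT)\rangle$, and the pieces reassemble. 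This modular decomposition sidesteps any need to work inside $N_\H(T_1)$ or to establish $E(\H_i)\unlhd\H$.
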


\begin{proof}
As $T\leq N_S(\H_i)$ and $E(\H_i)$ is by \cite[Lemma~11.12]{Henke:Regular} invariant under automorphisms of $\H_i$, we have
\begin{equation*}
 T\leq N_S(E(\H_i))\mbox{ and }T_i\unlhd T\mbox{ for }i=1,2.
\end{equation*}
Let $i\in\{1,2\}$. By \cite[Lemma~3.18(a)]{Henke:NormalizerSubnormal}, $\H_i T$ is a partial subgroup of $\L$ and $(\H_iT,\delta(\F_T(\H_iT)),T)$ is a regular locality with $E(\H_i)=E(\H_iT)$. In particular, $E(\H_i)=E(\H_iT)\unlhd\H_iT$ by \cite[Lemma~11.13]{Henke:Regular}. It follows thus from Lemma~\ref{L:SplitFSL}(b)  that
\[\F_T(\H_iT)=\<\F_T(E(\H_i)T),\F_T(N_{\H_iT}(T_i))\>\mbox{ for }i=1,2.
\]
By the Dedekind Lemma \cite[Lemma~1.10]{Chermak:2015}, $N_{\H_iT}(T_i)=N_{\H_i}(T_i)T=H_iT$. Hence,
 \begin{equation}\label{E:FTHiT}
 \F_T(\H_iT)=\<\F_T(E(\H_i)T),\F_T(H_iT)\>\mbox{ for }i=1,2.
\end{equation}
As $\N\unlhd \H$ by Theorem~\ref{T:c}, the product $\N T$ is a partial subgroup of $\L$ by \cite[Lemma~3.15]{Henke:Regular}. We show next that
\begin{equation}\label{E:EHiunlhdNT}
E(\H_i)\unlhd \N T \mbox{ for each }i=1,2.
\end{equation}
For the proof let $i\in\{1,2\}$, $x\in E(\H_i)$ and $f\in \N T$ with $u:=(f^{-1},x,f)\in\D$. Then there exist $n\in\N$ and $s\in T$ with $f=ns$. By \cite[Lemma~1.4(f)]{Chermak:2015}, $f^{-1}=s^{-1}n^{-1}$. Moreover, by \cite[Lemma~2.8]{Henke:2020}, we have $S_f=S_{(n,s)}$ and $S_{f^{-1}}=S_{(s^{-1},n^{-1})}$. Hence, for $v:=(s^{-1},n^{-1},x,n,s)$, we have $S_v=S_u$. Applying \eqref{E:Sw} twice, it follows first that $S_v=S_u\in\Delta$ and then that $v\in\D$. Hence, by the axioms of a partial group, we have $x^f=\Pi(u)=\Pi(v)=(x^n)^s$. As $E(\H_i)\unlhd F^*(\L)\supseteq \N$ by \cite[Proposition~11.7]{Henke:Regular}, we have $x^n\in E(\H_i)$. So $s\in T\leq N_S(E(\H_i))$ implies that $x^f=(x^n)^s\in E(\H_i)$. This shows \eqref{E:EHiunlhdNT}. We show next:
\begin{equation}\label{E:FTNT}
\F_T(\N T)=\<\F_T(E(\H_1)T),\F_T(E(\H_2)T)\>
\end{equation}
As $E(\H_i)T\subseteq \N T$ for $i=1,2$, we have clearly $\F_0:=\<\F_T(E(\H_1)T),\F_T(E(\H_2)T)\>\subseteq\F_T(\N T)$ and so it remains to show the opposite inclusion. Thus, fixing $f\in\N T$, we need to show that $c_f|_{S_f\cap T}$ is a morphism in $\F_0$. Write $f=nt$ for some $n\in\N$ and $t\in T$. By \cite[Lemma~2.8]{Henke:2020}, we have $S_f=S_{(n,t)}$, which implies that $c_f\colon S_f\cap T\rightarrow T$ is a composite of restrictions of $c_n|_{S_n\cap T}$ and $c_t|_T$. As $c_t|_T$ is a morphism in $\F_0$, it is thus sufficient to show that $c_n|_{S_n\cap T}$ is a morphism in $\F_0$. By \cite[Lemma~3.18(a)]{Henke:NormalizerSubnormal} $(\N T,\delta(\F_T(\N T)),T)$ is a regular locality. Moreover, by Theorem~\ref{T:c}, $\N=E(\H_1)E(\H_2)$ and by \eqref{E:EHiunlhdNT}, $E(\H_i)\unlhd \N T$ for each $i=1,2$. Therefore, \cite[Theorem~1]{Henke:2015} applied with $\N T$ in place of $\L$ yields the existence of elements $x\in E(\H_1)$ and $y\in E(\H_2)$ with $(x,y)\in\D$, $n=xy$ and $S_n\cap T=S_{(x,y)}\cap T$. So $c_n|_{S_n\cap T}$ is the composite of restrictions of $c_x|_{S_x\cap T}$ (which is a morphism in $\F_T(E(\H_1)T)$) and of $c_y|_{S_y\cap T}$ (which is a morphism in $\F_T(E(\H_2)T)$). So $c_n|_{S_n\cap T}$ is a morphism in $\F_0$ and \eqref{E:FTNT} holds.

\smallskip

We use now that $T=\H\cap S=H\cap S$ by Theorem~\ref{T:c}. Recall also that  $H_1$ and $H_2$ are subnormal in $N_\L(T^*)$ by Lemma~\ref{L:ab}. Our assumption yields moreover that $H_1$ and $H_2$ are $T$-invariant and so $\<H_i,T\>=H_iT$ for $i=1,2$. Hence, it follows from Theorem~\ref{T:Groups} that
\[\F_T(H)=\<\F_T(H_1 T),\F_T(H_2 T)\>.\]
So Lemma~\ref{L:d} implies
\[\F_T(\H)=\<\F_T(\N T),\F_T(H)\>=\<\F_T(\N T),\F_T(H_1 T),\F_T(H_2 T)\>.\]
Hence, the assertion follows from \eqref{E:FTHiT} and \eqref{E:FTNT}.
\end{proof}

We remove now the standing Hypothesis~\ref{H:Regular} to record the following corollary to Theorem~\ref{T:RegularLocalities}.

\begin{corollary}\label{C:RegularLocalities}
Let $(\L,\Delta,S)$ be a regular locality and suppose $\H_1,\H_2,\dots,\H_n$ are partial subnormal subgroups of $\L$. Then $\<\H_1,\H_2,\dots,\H_n\>$
is a partial subnormal subgroup of $\L$ with
\[\<\H_1,\H_2,\dots,\H_n\>\cap S=\<\H_1\cap S,\H_2\cap S,\dots,\H_n\cap S\>.\]
\end{corollary}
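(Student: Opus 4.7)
The plan is to argue by induction on $n$. For $n=1$ there is nothing to prove, and the case $n=2$ is exactly the content of Theorem~\ref{T:RegularLocalities}. So assume $n\geq 3$ and that the conclusion already holds for any $n-1$ partial subnormal subgroups of any regular locality.

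Applying the inductive hypothesis to $\H_1,\dots,\H_{n-1}$, the set
\[\K:=\<\H_1,\dots,\H_{n-1}\>\]
is a partial subnormal subgroup of $\L$ with $\K\cap S=\<\H_1\cap S,\dots,\H_{n-1}\cap S\>$. Now apply Theorem~\ref{T:RegularLocalities} to the two partial subnormal subgroups $\K$ and $\H_n$: the set $\<\K,\H_n\>$ is a partial subnormal subgroup of $\L$ with
\[\<\K,\H_n\>\cap S=\<\K\cap S,\H_n\cap S\>.\]

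It remains to identify $\<\K,\H_n\>$ with $\<\H_1,\dots,\H_n\>$. This is a formal consequence of the definition of the partial subgroup generated by a subset as the smallest partial subgroup containing it: since $\K\subseteq\<\H_1,\dots,\H_n\>$ and $\H_n\subseteq\<\H_1,\dots,\H_n\>$, we have $\<\K,\H_n\>\subseteq\<\H_1,\dots,\H_n\>$; conversely, $\<\K,\H_n\>$ contains each $\H_i$ (for $i<n$ because $\H_i\subseteq\K$, and $\H_n$ by construction), which forces $\<\H_1,\dots,\H_n\>\subseteq\<\K,\H_n\>$. The same formal argument applied inside $S$ yields
\[\<\K\cap S,\H_n\cap S\>=\<\<\H_1\cap S,\dots,\H_{n-1}\cap S\>,\H_n\cap S\>=\<\H_1\cap S,\dots,\H_n\cap S\>.\]
Combining these equalities gives the desired conclusion. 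No new obstacle arises beyond Theorem~\ref{T:RegularLocalities}; the only mildly delicate point is the (purely formal) identification of iterated generated partial subgroups above, which is immediate from the universal property of $\<\cdot\>$.
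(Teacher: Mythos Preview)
Your proof is correct and follows exactly the same approach as the paper, which simply states that the corollary follows from Theorem~\ref{T:RegularLocalities} by induction on $n$. You have merely spelled out the routine details of the induction and the identification $\<\K,\H_n\>=\<\H_1,\dots,\H_n\>$.
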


\begin{proof}
This follows from Theorem~\ref{T:RegularLocalities} using induction on $n$.
\end{proof}

\section{Wielandt's Join Theorem for fusion systems and related results}\label{S:FusionSystems}

After collecting some background, we prove Theorem~\ref{T:FusionSystems} in this section. Indeed, Theorem~\ref{T:FusionSystemsn} below gives some additional information. Some of the difficulties in formulating Wielandt's Join Theorem for fusion systems are illustrated in Example~\ref{E:1}. At the end we prove Proposition~\ref{P:GroupFusionBracket}.

\subsection{Some background} Throughout this subsection let $\F$ be a saturated fusion system over $S$. Let $\E$ be a subsystem of $\F$ over $T\leq S$. Given $\alpha\in\Hom_\F(T,S)$,  write $\E^\alpha$ for the subsystem of $\F$ over $T\alpha$ such that $\Hom_{\E^\alpha}(P\alpha,Q\alpha)=\{\alpha^{-1}\phi\alpha\colon \phi\in\Hom_\E(P,Q)\}$ for all $P,Q\leq T$. For $a\in S$ set $\E^a:=\E^\alpha$ where $\alpha=c_a$ is the conjugation map $T\rightarrow S$. Set
\[N_S(\E):=\{a\in N_S(T)\colon \E^a=\E\}.\]
One observes easily that $N_S(\E)$ is a subgroup of $S$.

\smallskip

The reader might want to recall the definition of $O^p(\F)$ from  \cite[Definition~I.7.3, Theorem~I.7.4]{Aschbacher/Kessar/Oliver:2011}. If $\E$ is a subnormal subsystem of $\F$ over $T\leq S$ and $P\leq N_S(\E)$, then a concrete description of a subsystem
$\E P=(\E P)_\F$ of $\F$ is given in \cite[Definition~2.7]{Henke:NormalizerSubnormal}. The subsystem $\E P$ should be thought of as a product of $\E$ with $P$. It depends however not only on $\E$ and $P$, but also on $\F$. We write  $(\E P)_\F$ if we want to emphasize that dependence. It is shown in \cite[Theorem~B(a)]{Henke:NormalizerSubnormal} that $\E P=(\E P)_\F$ is the unique saturated subsystem of $\F$ over $TP$ with $O^p(\E P)=O^p(\E)$. This characterization implies immediately the following remark:

\begin{remark}\label{R:Product}
Let $\E$ be a subnormal subsystem of the saturated fusion system $\F$, and let $\G$ be a saturated subsystem of $\F$ over $S'\leq S$ such that $\E$ is also  subnormal in $\G$. If $P\leq N_{S'}(\E)$, then $(\E P)_\G=(\E P)_\F$.
\end{remark}

The following lemma will be used in the proof of Proposition~\ref{P:GroupFusionBracket}. Part (a) goes back to  Puig.

\begin{lemma}\label{L:Hyperfocal}
Let $G$ be a finite group, $S\in\Syl_p(G)$ and $\F=\F_S(G)$. Then the following hold:
\begin{itemize}
 \item [(a)] $\F_{S\cap O^p(G)}(O^p(G))=O^p(\F)$.
 \item [(b)] Let $H\subn G$, $T=S\cap H$ and $\E:=\F_T(H)$. Then $\E\subn\F$ and $N_S(H)\leq N_S(\E)$. Moreover, $(\E P)_\F=\F_{TP}(HP)$ for every $P\leq N_S(H)$.
\end{itemize}
\end{lemma}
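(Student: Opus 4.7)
This is the fusion-system translation of Puig's hyperfocal subgroup theorem. Set $K := O^p(G)$ and $T := S \cap K$; since $K \unlhd G$ with $G/K$ a $p$-group, we have $T \in \Syl_p(K)$, so $\F_T(K)$ is a saturated subsystem of $\F$ over $T$. By Puig's theorem, $T$ coincides with the hyperfocal subgroup $\hyp(\F)$, which is the Sylow $p$-subgroup of $O^p(\F)$. I would then verify that $\F_T(K)$ has the characterising property of $O^p(\F)$ given in \cite[Theorem~I.7.4]{Aschbacher/Kessar/Oliver:2011}, namely being the unique saturated subsystem of $\F$ over $\hyp(\F)$ whose $\F$-automorphism groups on appropriate subgroups $P \leq T$ are generated by $O^{p'}(\Aut_\F(P))$; the key point is that $N_K(P)/C_K(P)$ realises exactly the $O^{p'}$-part of $N_G(P)/C_G(P)$.

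\textbf{Subnormality and normaliser in (b).} Choose a subnormal series $H = H_0 \unlhd H_1 \unlhd \cdots \unlhd H_n = G$ and set $T_i := S \cap H_i$. A downward induction, using $H_i \unlhd H_{i+1}$ and $T_{i+1} \in \Syl_p(H_{i+1})$, gives $T_i = T_{i+1} \cap H_i \in \Syl_p(H_i)$, so the standard result that a normal subgroup yields a normal fusion subsystem provides the chain $\F_{T_0}(H_0) \unlhd \F_{T_1}(H_1) \unlhd \cdots \unlhd \F_{T_n}(H_n) = \F$, and hence $\E \subn \F$. For $N_S(H) \leq N_S(\E)$, take $a \in N_S(H)$; then $T^a = T$, and any generating morphism $c_h|_P$ of $\E$ (with $h \in H$, $P \leq S_h$) conjugates under $c_a$ to $c_{h^a}|_{P^a}$, which lies in $\E$ because $h^a \in H$. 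Hence $\E^a = \E$.

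\textbf{The product identity.} I would use the uniqueness in Remark~\ref{R:Product}: $(\E P)_\F$ is characterised as the unique saturated subsystem of $\F$ over $TP$ with $O^p((\E P)_\F) = O^p(\E)$. Granted $P \leq N_G(H)$ (see below), $HP \leq G$ with $[HP:TP] = [H:T]$ coprime to $p$, so $TP \in \Syl_p(HP)$ and $\F_{TP}(HP)$ is a saturated subsystem of $\F$ over $TP$. For the $O^p$ computation, applying (a) twice yields $O^p(\F_{TP}(HP)) = \F_{TP \cap O^p(HP)}(O^p(HP))$ and $O^p(\E) = \F_{T \cap O^p(H)}(O^p(H))$. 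Since $HP/H$ is a $p$-group, $O^p(HP) = O^p(H)$; and since $P \cap H \leq S \cap H = T$, a short Dedekind argument inside $H$ gives $TP \cap O^p(H) = T \cap O^p(H)$. Thus $O^p(\F_{TP}(HP)) = O^p(\E)$, and uniqueness forces $(\E P)_\F = \F_{TP}(HP)$.

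\textbf{Main obstacle.} The subtle point is to justify $P \leq N_G(H)$ from the weaker hypothesis $P \leq N_S(\E)$, which is needed for $HP$ to even be a subgroup of $G$. This does not follow from the definition of $N_S(\E)$ alone; I expect to resolve it by exploiting the subnormality of $H$ together with Wielandt's theorem, showing that for $a \in P$ the conjugate $H^a$ is a subnormal subgroup of $G$ with the same Sylow $T$ and the same induced fusion system $\E$, and that these data rigidify $H$ within its Wielandt closure enough to force $H^a = H$.
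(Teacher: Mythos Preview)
Your approach to (a) and to the subnormality and normaliser claims in (b) is essentially the paper's: Puig's hyperfocal theorem identifies $S\cap O^p(G)$ with $\hyp(\F)$, and then the uniqueness characterisation of $O^p(\F)$ from \cite[Theorem~I.7.4]{Aschbacher/Kessar/Oliver:2011} finishes (a); subnormality of $\E$ is standard (the paper simply cites \cite[Proposition~I.6.2]{Aschbacher/Kessar/Oliver:2011}), and $N_S(H)\leq N_S(\E)$ is the easy direction. For the product identity your argument via $O^p(HP)=O^p(H)$ together with the uniqueness characterisation of $(\E P)_\F$ is again exactly what the paper does.

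The obstacle you flag is real, and the paper does not resolve it either: its proof of the product identity opens with ``For $P\leq N_S(H)$ notice that $O^p(HP)=O^p(H)$\ldots'' and never returns to an arbitrary $P\leq N_S(\E)$. So the statement as printed overshoots what is actually established. This is harmless for the paper's purposes, because the only place the identity is invoked (in the proof of Proposition~\ref{P:GroupFusionBracket}) is under the standing assumption $T\leq N_S(H_1)\cap N_S(H_2)$, i.e.\ with $P\leq N_S(H)$. You should therefore not invest effort in the speculative rigidity argument you sketch; read the hypothesis as $P\leq N_S(H)$, and your proof is then complete and matches the paper's line for line.
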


\begin{proof}
\textbf{(a)} It was first observed by Puig \cite[$\S$ 1.1]{Puig:2000} that $S\cap O^p(G)=\hyp(\F)$; a detailed proof can be found in \cite[Theorem~1.33]{Craven:Book}. Thus $\F_{S\cap O^p(\F)}(O^p(\F))$ is a saturated subsystem of $\F$ over $\hyp(\F)$. As $O^p(\F)$ is characterized in \cite[Theorem~I.7.4]{Aschbacher/Kessar/Oliver:2011} as the unique saturated subsystem of $\F$ over $\hyp(\F)$, part (a) follows.

\smallskip

\textbf{(b)} It follows from \cite[Proposition~I.6.2]{Aschbacher/Kessar/Oliver:2011} that $\E$ is subnormal in $\F$, and one observes easily that $N_S(H)\leq N_S(\E)$. For $P\leq N_S(H)$ notice that $O^p(HP)=O^p(H)$ and so (a) yields $O^p(\F_{TP}(HP))=\F_{O^p(H)\cap S}(O^p(H))=O^p(\E)$. As $\E P=(\E P)_\F$ is by \cite[Theorem~B(a)]{Henke:NormalizerSubnormal} the unique saturated subsystem of $\F$ over $TP$ with $O^p(\E P)=O^p(\E)$, statement (b) follows.
\end{proof}

\subsection{Wielandt's Join Theorem for fusion systems}

In this subsection we assume the following hypothesis:

\begin{hypo}\label{H:FusionSystems}
Let $\F$ be a saturated fusion system and $n\geq 1$. For $i=1,2,\dots,n$ let $\E_i$ be a subnormal subsystem of $\F$ over $S_i\leq S$.
\end{hypo}

We will show the following theorem, which implies Theorem~\ref{T:FusionSystems}.

\begin{theorem}\label{T:FusionSystemsn}
Assume Hypothesis~\ref{H:FusionSystems}. Then there exists a subsystem
\[\E:=\la\E_1,\E_2,\dots,\E_n\ra\]
of $\F$ over $T:=\<S_1,S_2,\dots,S_n\>$ such that the following hold.
\begin{itemize}
 \item [(a)] $\E$ is subnormal in $\F$ and $\E_i\subn\E$ for $i=1,2,\dots,n$.
 \item [(b)] If $\G$ is a saturated subsystem of $\F$ with $\E_i\subn\G$ for $i=1,2,\dots,n$, then $\E\subn\G$. In particular, $\E$ is the smallest saturated subsystem of $\F$ in which $\E_1,\dots,\E_n$ are subnormal.
 \item [(c)] Let $0=i_0< i_1<i_2<\cdots <i_k=n$. Then
 \[\E=\la \;\la \E_1,\dots\E_{i_1}\ra,\la \E_{i_1+1},\dots,\E_{i_2}\ra,\dots,\la \E_{i_{k-1}+1},\dots ,\E_{i_k}\ra\;\ra\]
 (where for $j=1,\dots,k$, $\la \E_{i_{j-1}+1},\dots,\E_{i_j}\ra$ is the smallest saturated subsystem in which $\E_{i_{j-1}+1},\dots,\E_{i_j}$ are subnormal).
 \item [(d)] Let $(\L,\Delta,S)$ be a regular locality over $\F$. For $i=1,2,\dots,n$ let $\H_i\subn\L$ with $S_i=\H_i\cap S$ and $\F_{S_i}(\H_i)=\E_i$. Then, setting $\H:=\<\H_1,\H_2,\dots,\H_n\>$, we have
 \[T=\H\cap S\mbox{ and }\E=\F_T(\H).\]
\end{itemize}
\end{theorem}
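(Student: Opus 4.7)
The strategy is to realize $\E$ as $\F_T(\H)$ for a suitable partial subnormal subgroup $\H$ of a regular locality over $\F$, then to deduce (a)--(d) from the results of Section~\ref{S:Regular}. Fix a regular locality $(\L,\Delta,S)$ over $\F$. By the locality--fusion system correspondence for partial subnormal subgroups recalled in the introduction, each $\E_i$ arises from a unique $\H_i\subn\L$ with $\H_i\cap S=S_i$ and $\F_{S_i}(\H_i)=\E_i$. Corollary~\ref{C:RegularLocalities} yields $\H:=\<\H_1,\dots,\H_n\>\subn\L$ with $\H\cap S=\<S_1,\dots,S_n\>=T$, and Theorem~\ref{T:RegularSubnormal} shows that $\E:=\F_T(\H)$ is saturated. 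Since $\H\subn\L$, the correspondence gives $\E\subn\F$; since each $\H_i\subseteq\H$ is partial subnormal in $\L$ (hence in $\H$), the correspondence applied inside $\H$ yields $\E_i\subn\E$. This proves (d), hence (a). Part (c) is then immediate from associativity of $\<\cdot\>$ on partial subgroups together with (d) applied to both sides of the claimed equality.

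The heart of the theorem is the minimality statement (b). Let $\G$ be a saturated subsystem of $\F$ with $\E_i\subn\G$ for every $i$, and choose a regular locality $(\L',\Delta',S')$ over $\G$; since each $S_i\le S'$, also $T\le S'$. The correspondence for $\G$ provides unique $\H_i'\subn\L'$ with $\H_i'\cap S'=S_i$ and $\F_{S_i}(\H_i')=\E_i$, and Corollary~\ref{C:RegularLocalities} gives $\H':=\<\H_1',\dots,\H_n'\>\subn\L'$ with $\H'\cap S'=T$. The subsystem $\E':=\F_T(\H')$ is subnormal in $\G$ by the correspondence for $\G$, so everything reduces to proving $\E=\E'$ inside $\F$.

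To establish $\E=\E'$ I would compute generators for both sides via Lemma~\ref{L:ApplyThmC}. After replacing each $\H_i$ by $\H_iT$ and each $\H_i'$ by $\H_i'T$ --- which leaves $\H$ and $\H'$ unchanged because $T$ is already contained in each, while ensuring that $T$ normalizes every factor so that the hypothesis of Lemma~\ref{L:ApplyThmC} is met --- \cite[Lemma~3.19(a)]{Henke:NormalizerSubnormal} identifies $\F_T(\H_iT)$ with the product subsystem $(\E_iT)_\F$ and $\F_T(\H_i'T)$ with $(\E_iT)_\G$. An inductive application of Lemma~\ref{L:ApplyThmC} (treating the $n$ factors two at a time) yields
\[\F_T(\H)=\<\F_T(\H_1T),\dots,\F_T(\H_nT)\>\quad\text{and}\quad \F_T(\H')=\<\F_T(\H_1'T),\dots,\F_T(\H_n'T)\>,\]
while Remark~\ref{R:Product} gives $(\E_iT)_\F=(\E_iT)_\G$ for every $i$. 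The two generating sets coincide, so $\E=\E'$. The principal obstacle is carrying out the iterated application of Lemma~\ref{L:ApplyThmC} cleanly (the Lemma is stated only for two factors, and the intermediate intersections $\<S_1,\dots,S_k\>\cap S$ must be handled with care) and verifying the identification $\F_T(\H_iT)=(\E_iT)_\F$; both hinge on a careful unwinding of the locality--fusion system dictionary in the saturated setting.
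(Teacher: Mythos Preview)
Your setup for parts (a), (c), (d) follows the paper's approach essentially verbatim, and the reduction of (b) to showing $\E=\E'$ (where $\E'$ is built inside a regular locality over $\G$) is also the paper's starting point. The gap is in the step where you ``replace each $\H_i$ by $\H_iT$ \dots\ ensuring that $T$ normalizes every factor''. This replacement does \emph{not} arrange that $T$ normalizes $\H_iT$: in general $T\H_iT\neq \H_iT$ unless $T$ already normalizes $\H_i$, so the hypothesis of Lemma~\ref{L:ApplyThmC} is not met. Likewise the invocation of \cite[Lemma~3.19(a)]{Henke:NormalizerSubnormal} to identify $\F_T(\H_iT)$ with $(\E_iT)_\F$ presupposes $T\le N_S(\H_i)$; without that, $\H_iT$ need not even be a partial subgroup, let alone carry the structure required. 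The same issue recurs in your ``inductive application'' of Lemma~\ref{L:ApplyThmC}: the intermediate partial subnormal subgroup $\<\H_1,\dots,\H_k\>$ has no reason to be $T$-invariant.

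The paper deals with exactly this obstruction by a minimal-counterexample argument (Lemma~\ref{L:EsubnG}): reduce to $n=2$, and in a counterexample with $|S:T|$ minimal and then $|\E_1|+|\E_2|$ maximal, either $T\le N_S(\E_1)\cap N_S(\E_2)$ (whence your computation via Lemma~\ref{L:n=2Help} and Remark~\ref{R:Product} does go through), or one can pick $x\in N_T(T\cap N_S(\E_1))\setminus N_S(\E_1)$ and use Lemma~\ref{L:Conjugate} to replace $\E_1$ by the strictly larger $\la\E_1,\E_1^x\ra$, contradicting maximality. Your sketch is missing precisely this conjugation/induction mechanism, which is the substantive content of part (b).
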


The remainder of this section is devoted to the proof of Theorem~\ref{T:FusionSystemsn}. For that we fix a regular locality $(\L,\Delta,S)$ over $\F$. Such a regular locality exists always by \cite[Lemma~10.4]{Henke:Regular}. By \cite[Theorem~F]{Chermak/Henke}, for each $i=1,2,\dots,n$, there exists a unique partial subnormal subgroup $\H_i$ of $\L$ with $\H_i\cap S=S_i$ and $\F_{S_i}(\H_i)=\E_i$. Set now
\[T:=\<S_1,S_2,\dots,S_n\>\mbox{ and }\H:=\<\H_1,\H_2,\dots,\H_n\>.\]
Moreover, define
\[\E:=\la \E_1,\E_2,\dots,\E_n\ra:=\F_{\H\cap S}(\H).\]
Similarly, we define $\la \mD_1,\dots,\mD_r\ra$ whenever $\mD_1,\dots,\mD_r$ are subnormal subsystems of $\F$. Note that the definition depends a priori on the choice of the regular locality $(\L,\Delta,S)$. However, we will show below that Theorem~\ref{T:FusionSystemsn}(b) holds for this choice of $\E$, and thus $\E$ is in fact uniquely determined by $\F$ and $\E_1,\dots,\E_n$.

\begin{lemma}\label{L:FusionSystemsFirstPart}
The following hold:
\begin{itemize}
 \item [(a)] $\H\cap S=T$ and $\E$ is a subnormal subsystem of $\F$ over $T$. Moreover, $\E_i\subn\E$ for all $i=1,2,\dots,n$.
 \item [(b)] Let $1\leq i_1<i_2<\cdots <i_k=n$. Then
 \[\E=\la \;\la \E_1,\dots\E_{i_1}\ra,\la \E_{i_1+1},\dots,\E_{i_2}\ra,\dots,\la \E_{i_{k-1}+1},\dots ,\E_{i_k}\ra\;\ra.\]
\end{itemize}
\end{lemma}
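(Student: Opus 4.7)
The plan is to leverage Corollary~\ref{C:RegularLocalities} and Theorem~\ref{T:RegularSubnormal} to transport the generation statement from the locality level to the fusion system level, and then use the correspondence between partial subnormal subgroups of $\L$ and subnormal subsystems of $\F$ from \cite[Theorem~E(a)]{Chermak/Henke} to identify the bracket operation on both sides.

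For part (a): First I would apply Corollary~\ref{C:RegularLocalities} to conclude that $\H = \<\H_1,\dots,\H_n\>$ is a partial subnormal subgroup of $\L$ with $\H\cap S = \<\H_1\cap S,\dots,\H_n\cap S\> = \<S_1,\dots,S_n\> = T$. This already gives the first equality and shows that $\E = \F_T(\H)$ is well defined as a fusion system over $T$. Theorem~\ref{T:RegularSubnormal} then yields that $\E$ is saturated, and the correspondence \cite[Theorem~E(a)]{Chermak/Henke} applied to the subnormal subgroup $\H$ gives that $\E$ is subnormal in $\F$. For the claim $\E_i\subn\E$, I would observe that $\H_i\subn\L$ and $\H_i\subseteq\H$ imply $\H_i\subn\H$: given a subnormal series $\H_i=\K_0\unlhd\K_1\unlhd\cdots\unlhd\K_m=\L$, intersecting each $\K_j$ with $\H$ yields a subnormal series from $\H_i$ to $\H$ (for $f\in\H\cap\K_j$, $x\in\H\cap\K_{j-1}$ with $x\in\D(f)$, one has $x^f\in\H$ since $\H$ is a partial subgroup and $x^f\in\K_{j-1}$ since $\K_{j-1}\unlhd\K_j$). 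Applying Theorem~\ref{T:RegularSubnormal} to the regular locality $\H$ and then the correspondence \cite[Theorem~E(a)]{Chermak/Henke} inside $\H$, I obtain $\E_i = \F_{S_i}(\H_i)\subn\F_T(\H)=\E$.

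For part (b): Set $\H'_j := \<\H_{i_{j-1}+1},\dots,\H_{i_j}\>$ for $j=1,\dots,k$ (with $i_0=0$), and $T_j := \H'_j\cap S$. By part (a) applied to each sub-collection, $\H'_j$ is a partial subnormal subgroup of $\L$ with $\H'_j\cap S = T_j$ and $\F_{T_j}(\H'_j) = \la\E_{i_{j-1}+1},\dots,\E_{i_j}\ra =: \mD_j$. The uniqueness statement in \cite[Theorem~E(a)]{Chermak/Henke} identifies $\H'_j$ as the unique partial subnormal subgroup of $\L$ realizing $\mD_j$, so by the definition of the bracket, $\la\mD_1,\dots,\mD_k\ra = \F_{T'}(\H')$ where $\H' = \<\H'_1,\dots,\H'_k\>$. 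Now the standard associativity of the partial-subgroup closure operation (the partial subgroup generated by a union equals the partial subgroup generated by the union of generating sets) gives
\[\H' = \<\H'_1,\dots,\H'_k\> = \<\H_1,\dots,\H_n\> = \H,\]
whence $T' = \H'\cap S = T$ and $\F_{T'}(\H') = \F_T(\H) = \E$.

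The main obstacle is justifying cleanly that $\H_i\subn\L$ and $\H_i\subseteq\H$ force $\H_i\subn\H$ inside a partial group, and verifying the associativity $\<\<X_1\>,\dots,\<X_k\>\>=\<X_1,\dots,X_k\>$ for partial subgroup closures; both are routine consequences of the definition of a partial subgroup, but they are the only places where one must step outside the direct invocation of the quoted results.
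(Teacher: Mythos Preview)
Your proposal is correct and follows essentially the same route as the paper. The only notable difference is that for $\E_i\subn\E$ the paper cites \cite[Proposition~7.1(c)]{Chermak/Henke} directly (which packages the implication ``$\H_i\subn\L$ and $\H_i\subseteq\H$ $\Rightarrow$ $\F_{S_i}(\H_i)\subn\F_T(\H)$''), whereas you unwind it by intersecting a subnormal series with $\H$ and then invoking the correspondence inside the regular locality $\H$; similarly the paper uses \cite[Proposition~7.1(a)]{Chermak/Henke} rather than Theorem~E(a) for $\E\subn\F$. Part~(b) is handled identically.
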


\begin{proof}
\textbf{(a)} By Corollary~\ref{C:RegularLocalities}, we have $\H\cap S=T$ and $\H\subn\L$. In particular, $\E=\F_{\H\cap S}(\H)=\F_T(\H)$ is a subsystem over $T$. Moreover, it follows from \cite[Proposition~7.1(a)]{Chermak/Henke} that $\E$ is subnormal in $\F$. As $\H_i\subseteq\H$ and $\H_i\subn\L$, it is a consequence of \cite[Proposition~7.1(c)]{Chermak/Henke} that $\E_i=\F_{S\cap\H_i}(\H_i)\subn\F_T(\H)=\E$ for all $i=1,2,\dots,n$.

\smallskip

\textbf{(b)} This follows since
\[\H:=\<\H_1,\H_2,\dots,\H_n\>=\<\;\<\H_1,\dots\H_{i_1}\>,\<\H_{i_1+1},\dots,\H_{i_2}\>,\dots,\< \H_{i_{k-1}+1},\dots ,\H_{i_k}\>\;\>.\]
\end{proof}

The next goal is the proof of Theorem~\ref{T:FusionSystemsn}(b). We start with two preliminary results, which will also be used in the proof of Proposition~\ref{P:GroupFusionBracket}.

\begin{lemma}\label{L:n=2Help}
Suppose $n=2$ and $T\leq N_S(\E_1)\cap N_S(\E_2)$. Then
\[\E=\<(\E_1T)_\F,(\E_2T)_\F\>.\]
\end{lemma}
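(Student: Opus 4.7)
My plan is to reduce to Lemma~\ref{L:ApplyThmC} by lifting the hypothesis from the fusion systems $\E_1,\E_2$ to the corresponding partial subnormal subgroups of a regular locality, and then to identify each constituent $\F_T(\H_iT)$ with $(\E_iT)_\F$ via the uniqueness statement in Remark~\ref{R:Product}.

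First I would fix, as in the construction preceding Lemma~\ref{L:FusionSystemsFirstPart}, a regular locality $(\L,\Delta,S)$ over $\F$ together with the unique partial subnormal subgroups $\H_1,\H_2\subn\L$ satisfying $\H_i\cap S=S_i$ and $\F_{S_i}(\H_i)=\E_i$; existence and uniqueness come from \cite[Theorem~E(a)]{Chermak/Henke}. By Lemma~\ref{L:FusionSystemsFirstPart}(a), $\H=\<\H_1,\H_2\>$ satisfies $\H\cap S=T$ and $\E=\F_T(\H)$.

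The first substantive step is to promote the hypothesis $T\leq N_S(\E_i)$ to the statement $T\leq N_\L(\H_i)$ for $i=1,2$, which is precisely what Lemma~\ref{L:ApplyThmC} requires. For this I would use that $S\leq N_\L(T^*)$, so every $t\in T\leq S$ induces an automorphism $c_t$ of $\L$ by Lemma~\ref{L:NLT*actsComp}. The image $\H_i^t:=\H_ic_t$ is then a partial subnormal subgroup of $\L$ whose Sylow subgroup is $S_i^t=S_i$ (the last equality holds because $t\in N_S(\E_i)$ normalizes the intrinsic Sylow subgroup of $\E_i$) and whose fusion system is $\E_i^t=\E_i$. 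The uniqueness in \cite[Theorem~E(a)]{Chermak/Henke} then forces $\H_i^t=\H_i$, so $T\leq N_S(\H_i)$.

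With this in hand, Lemma~\ref{L:ApplyThmC} yields
\[\E=\F_T(\H)=\<\F_T(\H_1T),\F_T(\H_2T)\>,\]
and the proof reduces to the identifications $\F_T(\H_iT)=(\E_iT)_\F$ for $i=1,2$. By \cite[Lemma~3.19(a)]{Henke:NormalizerSubnormal}, $\H_iT$ is a partial subgroup of $\L$ and $(\H_iT,\delta(\F_T(\H_iT)),T)$ is a regular locality, so $\F_T(\H_iT)$ is a saturated subsystem of $\F$ over $T=S_iT$. By Remark~\ref{R:Product}, it then suffices to verify that $O^p(\F_T(\H_iT))=O^p(\E_i)$, and this is the main technical obstacle. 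I would approach it via the hyperfocal description of $O^p(-)$: since $T\leq N_\L(\H_i)$ acts on $\H_i$ through $c_t$ with $t\in T$ (a $p$-group), the generators of $\hyp(\F_T(\H_iT))$ arise from commutators $[t,\alpha]$ with $\alpha$ a $p'$-automorphism, and these already lie in $\hyp(\E_i)$; conversely $\hyp(\E_i)\subseteq \hyp(\F_T(\H_iT))$ is immediate from $\E_i\subseteq\F_T(\H_iT)$. Hence $\hyp(\F_T(\H_iT))=\hyp(\E_i)$, which by \cite[Theorem~I.7.4]{Aschbacher/Kessar/Oliver:2011} (compare Lemma~\ref{L:Hyperfocal}(a)) forces $O^p(\F_T(\H_iT))=O^p(\E_i)$, completing the proof.
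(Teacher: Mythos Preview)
Your overall architecture is exactly that of the paper: pass to a regular locality, promote $T\leq N_S(\E_i)$ to $T\leq N_S(\H_i)$, invoke Lemma~\ref{L:ApplyThmC}, and then identify $\F_T(\H_iT)$ with $(\E_iT)_\F$. For the promotion step the paper simply cites \cite[Lemma~3.8]{Henke:NormalizerSubnormal} (which gives $N_S(\E_i)=N_S(\H_i)$), whereas you reprove the needed inclusion via the uniqueness clause of \cite[Theorem~E(a)]{Chermak/Henke}; that argument is correct and is essentially how that lemma is proved.

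The divergence, and the only real gap, is in the final identification. The paper dispatches $\F_T(\H_iT)=(\E_iT)_\F$ by a direct citation of \cite[Theorem~D(b)]{Henke:NormalizerSubnormal}. You instead try to recover this from the $O^p$-characterization (Theorem~D(a), which is what underlies Remark~\ref{R:Product}), and for that you must show $\hyp(\F_T(\H_iT))\subseteq\hyp(\E_i)$. Your one-line justification does not do this: the generators of $\hyp(\F_T(\H_iT))$ are commutators $[x,\alpha]$ with $x\in P\leq T$ and $\alpha$ a $p'$-element of $\Aut_{\F_T(\H_iT)}(P)$, and for $P\not\leq S_i$ there is no $\E_i$-automorphism of $P$ available to absorb $[x,\alpha]$ into $\hyp(\E_i)$. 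One can push this through---e.g.\ by arguing that $\H_i\unlhd\H_iT$ with $p$-power index so that every $p'$-element of $N_{\H_iT}(P)$ lies in $\H_i$, and then reducing to $P\cap S_i$---but that is work you have not done, and it amounts to reproving Theorem~D(b). The cleanest fix is simply to cite \cite[Theorem~D(b)]{Henke:NormalizerSubnormal} for $\F_T(\H_iT)=(\E_iT)_\F$, as the paper does.
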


\begin{proof}
We use throughout that $T=\H\cap S$ by Lemma~\ref{L:FusionSystemsFirstPart}(a).
By \cite[Lemma~3.8]{Henke:NormalizerSubnormal}, $N_S(\E_i)=N_S(\H_i)$ for $i=1,2$. Hence, $T\leq N_S(\H_1)\cap N_S(\H_2)$ and so  Lemma~\ref{L:ApplyThmC} gives
\[\E=\<\F_T(\H_1T),\F_T(\H_2T)\>.\]
By \cite[Theorem~B(b)]{Henke:NormalizerSubnormal}, $\F_T(\H_iT)=(\E_iT)_\F$ for $i=1,2$. Hence the assertion holds.
\end{proof}

\begin{lemma}\label{L:Conjugate}
Let $x\in T$. Then $\E_1^x\subn\F$ and $\E=\la \E_1,\E_1^x,\E_2,\E_3,\dots,\E_n\ra$.
\end{lemma}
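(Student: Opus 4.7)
The plan is to work with the fixed regular locality $(\L,\Delta,S)$ over $\F$ and the partial subnormal subgroups $\H_1,\dots,\H_n$ chosen just before Lemma~\ref{L:FusionSystemsFirstPart}, and to reduce both claims to an equality of partial subgroups of $\L$.

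First I would note that $T^*:=F^*(\L)\cap S$ is strongly closed in $\F$ and hence normal in $S$, so $x\in T\leq N_S(T^*)\leq N_\L(T^*)$. By Lemma~\ref{L:NLT*actsComp}, $c_x\colon \L\to\L$ is then an automorphism of the partial group $\L$ with $\D(x)=\L$; moreover $c_x|_S$ is an automorphism of $S$ since $x\in S$. Setting $\H_1^x:=c_x(\H_1)$, I would check that $\H_1^x$ is a partial subnormal subgroup of $\L$ with $\H_1^x\cap S=S_1^x$, and, using the identity $c_{h^x}=c_x^{-1}\circ c_h\circ c_x$ (valid on the appropriate subgroup of $S_1^x$), that $\F_{S_1^x}(\H_1^x)=\E_1^x$. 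Then \cite[Proposition~7.1(a)]{Chermak/Henke} gives $\E_1^x\subn\F$, and the uniqueness in \cite[Theorem~E(a)]{Chermak/Henke} identifies $\H_1^x$ with the partial subnormal subgroup of $\L$ used in the definition of $\la\E_1,\E_1^x,\E_2,\dots,\E_n\ra$.

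Next I would compare $\E=\F_T(\H)$ with $\la\E_1,\E_1^x,\E_2,\dots,\E_n\ra=\F_{T'}(\H')$, where $\H':=\<\H_1,\H_1^x,\H_2,\dots,\H_n\>$ and $T':=\H'\cap S$. The inclusion $\H\subseteq\H'$ is immediate. For the reverse inclusion, $x,x^{-1}\in T\subseteq\H$, and $\D(x)=\L$ guarantees that $(x^{-1},h,x)\in\D$ for every $h\in\H_1\subseteq\H$; since $\H$ is a partial subgroup of $\L$, each $h^x=\Pi(x^{-1},h,x)$ then lies in $\H$, so $\H_1^x\subseteq\H$ and consequently $\H'\subseteq\H$. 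Thus $\H=\H'$, hence $T=T'$, and
\[\E=\F_T(\H)=\F_{T'}(\H')=\la\E_1,\E_1^x,\E_2,\dots,\E_n\ra.\]

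The only mildly technical point I expect is the identification $\F_{S_1^x}(\H_1^x)=\E_1^x$, which relies on the compatibility of partial-group conjugation with the generating maps $c_h$. Once $c_x$ is known to be an automorphism of the partial group $\L$, this is routine, and the remainder of the argument collapses to the observation that $\H$ and $\H'$ are generated inside $\L$ by the same set of elements.
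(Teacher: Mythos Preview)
Your proposal is correct and follows essentially the same route as the paper: both arguments show $\H_1^x\subn\L$ with $\F_{S\cap\H_1^x}(\H_1^x)=\E_1^x$, then observe that $x\in T\subseteq\H$ forces $\<\H_1,\H_1^x,\H_2,\dots,\H_n\>=\H$, so the two subsystems coincide by definition. The only cosmetic difference is that the paper cites \cite[Lemma~3.26(a)]{Henke:NormalizerSubnormal} for the properties of $\H_1^x$, whereas you derive them directly from Lemma~\ref{L:NLT*actsComp}; your version is slightly more self-contained but otherwise identical in substance.
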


\begin{proof}
Since $c_x\in \Aut(S)$ induces an automorphism of $\F$, we have $\E_1^x\subn\F$. Indeed, it is shown in \cite[Lemma~3.26(a)]{Henke:NormalizerSubnormal} that $\H_1^x\subn\L$ and $\E_1^x=\F_{S\cap \H_1^x}(\H_1^x)$ (which implies by \cite[Theorem~7.1(a)]{Chermak/Henke} also that $\E_1^x\subn\F$). As $x\in T\subseteq\H=\<\H_1,\H_2,\dots,\H_n\>$, we have moreover that $\H=\<\H_1,\H_1^x,\H_2,\dots,\H_n\>$. It follows now from the definitions of $\E$ and of $\la \E_1,\E_1^x,\E_2,\dots,\E_n\ra$ that
\[\E=\F_{\H\cap S}(\H)=\la\E_1,\E_1^x,\E_2,\dots,\E_n\ra\]
\end{proof}

\begin{lemma}\label{L:EsubnG}
Let $\G$ be a saturated subsystem of $\F$ with $\E_i\subn\G$ for $i=1,2,\dots,n$. Then $\E\subn\G$.
\end{lemma}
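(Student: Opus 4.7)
My plan is to construct $\E^\sharp$ by working inside a regular locality of $\G$ and then show $\E=\E^\sharp$. Concretely, I will let $S_0\leq S$ denote the Sylow $p$-subgroup of $\G$ (so that $T\leq S_0$, since each $S_i\leq S_0$) and choose a regular locality $(\L',\Delta',S_0)$ over $\G$ via \cite[Lemma~10.4]{Henke:Regular}. Because each $\E_i$ is subnormal in $\G$, I will apply \cite[Theorem~E(a)]{Chermak/Henke} inside $\L'$ to obtain a unique partial subnormal subgroup $\H_i'\subn\L'$ with $\H_i'\cap S_0=S_i$ and $\F_{S_i}(\H_i')=\E_i$. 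Setting $\H^\sharp:=\<\H_1',\dots,\H_n'\>$, Corollary~\ref{C:RegularLocalities} gives $\H^\sharp\subn\L'$ with $\H^\sharp\cap S_0=T$; defining $\E^\sharp:=\F_T(\H^\sharp)$ and applying Lemma~\ref{L:FusionSystemsFirstPart}(a) to $(\L',\Delta',S_0)$ over $\G$ then yields $\E^\sharp\subn\G$, with $\E_i\subn\E^\sharp$ for each $i$.

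The heart of the argument is to prove $\E=\E^\sharp$ as subsystems of $\F$; once this is done, $\E=\E^\sharp\subn\G$, as desired. I will do this by induction on $n$. The case $n=1$ is immediate, and Lemma~\ref{L:FusionSystemsFirstPart}(b) (applied in both $\L$ and $\L'$) reduces $n>2$ to $n=2$. For $n=2$, I will first reduce to the $T$-normalized situation: Lemma~\ref{L:Conjugate} allows me to replace the list $\{\E_1,\E_2\}$ with the union of the $T$-orbits of $\E_1$ and of $\E_2$ without changing $\la\cdot\ra$. Grouping this enlarged list into the two $T$-orbits $O_1,O_2$ and setting $\mD_j:=\la\E:\E\in O_j\ra$ produces $T$-invariant saturated subsystems, and Lemma~\ref{L:FusionSystemsFirstPart}(b) gives $\la\E_1,\E_2\ra=\la\mD_1,\mD_2\ra$. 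In this $T$-normalized case, Lemma~\ref{L:n=2Help} provides the intrinsic identification
\[\la\mD_1,\mD_2\ra_\F=\<(\mD_1T)_\F,(\mD_2T)_\F\>\]
(and analogously inside $\G$), and Remark~\ref{R:Product} shows $(\mD_jT)_\F=(\mD_jT)_\G$ as soon as $\mD_j$ agrees whether computed in $\F$ or in $\G$ and $\mD_j\subn\G$.

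The main obstacle is verifying that the $\mD_j$'s do agree in $\F$ and in $\G$, since this is itself an instance of the original problem with possibly more than two subsystems per orbit. I plan to resolve this with a strong induction on the compound complexity measure $\bigl(n,\sum_{i=1}^{n}[T:N_T(\E_i)]\bigr)$ ordered lexicographically. Whenever the second coordinate exceeds $n$ (that is, some $\E_i$ is not already $T$-normalized), the orbit-grouping step produces a new list on which the second coordinate strictly decreases while the first is held at $2$; once every generating subsystem is $T$-normalized, Lemma~\ref{L:n=2Help} and Remark~\ref{R:Product} close the argument immediately. Checking that the orbit-grouping behaves compatibly between $\L$ and $\L'$, so that the inductive hypothesis can be invoked on the same orbit data in both localities, is the technically delicate point that will require the most care.
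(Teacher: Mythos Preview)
Your overall architecture matches the paper's: set up a regular locality $(\L',\Delta',S_0)$ over $\G$, build the analogue $\E^\sharp$ there, and reduce the identity $\E=\E^\sharp$ to the $T$-normalized case, which is then settled by Lemma~\ref{L:n=2Help} together with Remark~\ref{R:Product}. The reduction to $n=2$ via Lemma~\ref{L:FusionSystemsFirstPart}(b) is also the same.

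The genuine gap is in your inductive scheme for the non-normalized case. You propose a lexicographic induction on $\bigl(n,\sum_{i}[T:N_T(\E_i)]\bigr)$, but this measure does not decrease on the subproblem you must solve. To form $\mD_j:=\la\E_j^x:x\in T\ra$ you must verify that this bracket agrees in $\L$ and in $\L'$; that is an instance of the original problem with $n'=|O_j|$ generating subsystems. Whenever $|O_j|>2$ the first coordinate jumps from $2$ to $|O_j|$, so the lexicographic order increases. Reducing that $n'$ back to $2$ via Lemma~\ref{L:FusionSystemsFirstPart}(b) only spawns further $n=2$ subproblems with no a priori bound on the second coordinate (the relevant $T$ changes, and $[T'':N_{T''}(\E_j^{x_i})]$ can be arbitrary). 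So the induction as written is not well-founded.

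The paper sidesteps this by choosing a different measure and a smaller step. It takes a counterexample with $|S:T|$ maximal and then $|\E_1|+|\E_2|$ maximal, and adjoins a \emph{single} conjugate: for $x\in N_T(T_0)\setminus T_0$ with $T_0=T\cap N_S(\E_1)$, the pair $(\E_1,\E_1^x)$ lives over $\tS_1=\<S_1,S_1^x\>\leq T_0<T$, so $|S:\tS_1|>|S:T|$ and the inner subproblem is not a counterexample, giving $\tE_1:=\la\E_1,\E_1^x\ra\subn\G$. Then $(\tE_1,\E_2)$ has the same $T$ but strictly larger $|\tE_1|+|\E_2|$, so the secondary maximality finishes. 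Your orbit-at-once idea can in fact be salvaged by switching the primary inductive parameter to $|T|$: since $S_j\leq N_T(\E_j)<T$ and $T$ is a $p$-group, the normal closure $T_j'=\<S_j^T\>$ is a proper subgroup of $T$, so the $\mD_j$-subproblem has strictly smaller $|T|$. That is essentially the paper's measure, just chunked differently; but your stated measure must be replaced for the argument to go through.
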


\begin{proof}
If $n=1$, then $\E=\E_1\subn\G$. Hence, using   Lemma~\ref{L:FusionSystemsFirstPart}(b) and induction on $n$, we can reduce to the case $n=2$. Thus, we assume from now on that
\[n=2.\]
Suppose moreover that $(\F,\G,\E_1,\E_2,\L,\Delta,S)$ is a counterexample such that first $|S:T|$ and then $|\E_1|+|\E_2|$ is maximal, where $T=\<S_1,S_2\>$ as before and $|\E_i|$ denotes the number of morphisms in $\E_i$ for $i=1,2$.

\smallskip

Let $S'\leq S$ such that $\G$ is a subsystem over $S'$. As $\E_1$ and $\E_2$ are contained in $\G$, we have $T\leq S'$. Fix now a regular locality $(\L',\Delta',S')$ over $\G$ (which exists by \cite[Lemma~10.4]{Henke:Regular}). By \cite[Theorem~E(a)]{Chermak/Henke}, there exist partial subnormal subgroups $\H_1',\H_2'$ of $\L'$ such that $S_i=\H_i'\cap S'$ and $\E_i=\F_{S_i}(\H_i')$ for $i=1,2$. So we may define
\[\la \E_1,\E_2\ra_\G:=\F_{S\cap \H'}(\H')\mbox{ where }\H':=\<\H_1',\H_2'\>.\]
Notice that $\la \E_1,\E_2\ra_\G\subn\G$ by Lemma~\ref{L:FusionSystemsFirstPart}(a) applied with $(\G,\L',\Delta',S')$ in place of $(\F,\L,\Delta,S)$.

\smallskip

Assume now first that $T\leq N_S(\E_1)\cap N_S(\E_2)$. Then Lemma~\ref{L:n=2Help} yields $\E=\< (\E_1T)_\F,(\E_2T)_\F\>$. As  $T\leq S'$, we have $T\leq N_{S'}(\E_1)\cap N_{S'}(\E_2)$. Hence, Lemma~\ref{L:n=2Help} applied with $(\G,\L',\Delta',S')$ in place of $(\F,\L,\Delta,S)$ yields also that $\la \E_1,\E_2\ra_\G=\<(\E_1 T)_\G,(\E_2 T)_\G\>$. It follows now from Remark~\ref{R:Product} that $(\E_i T)_\G=(\E_i T)_\F$ for $i=1,2$ and thus
\[\E=\la \E_1,\E_2\ra_\G\subn\G.\]
This contradicts the assumption that $(\F,\G,\E_1,\E_2,\L,\Delta,S)$ is a counterexample. Hence, $T\not\leq N_S(\E_i)$ for some $i=1,2$. Since the situation is symmetric in $\E_1$ and $\E_2$, we may assume without loss of generality that
\[T\not\leq N_S(\E_1).\]
This means that $T_0:=T\cap N_S(\E_1)<T$ and thus $T_0<N_T(T_0)$.
Fix $x\in N_T(T_0)\backslash T_0$. By Lemma~\ref{L:Conjugate}, we have $\E_1^x\subn\F$ and $\E_1^x\subn\G$. Set now
\[\tE_1:=\la \E_1,\E_1^x\ra.\]
Notice that $\E_1^x$ is a subsystem over $S_1^x$ and $\tS_1:=\<S_1,S_1^x\>\leq T_0<T$ as $S_1\leq T_0$. Thus, $|S:\tS_1|>|S:T|$, and so the maximality of $|S:T|$ yields that $(\F,\G,\E_1,\E_1^x,\L,\Delta,S)$ is not a counterexample. Hence,
\[\tE_1\subn \G.\]
By Lemma~\ref{L:FusionSystemsFirstPart}(a), $\tE_1$ is a subnormal subsystem of $\F$ over $\tS_1$. As $x\in T$ and $\tS_1=\<S_1,S_1^x\>$, it follows that $T=\<S_1,S_2\>=\<\tS_1,S_2\>$. The choice of $x$ yields that $\E_1^x\neq \E_1$. Since $\E_1$ and $\E_1^x$ are contained in $\tE_1$, the subsystem $\E_1$ is therefore properly contained in $\tE_1$. So $|\tE_1|>|\E_1|$ and the maximality of $|\E_1|+|\E_2|$ yields that $(\F,\G,\tE_1,\E_2,\L,\Delta,S)$ is not a counterexample. As $\tE_1$ and $\E_2$ are subnormal in $\F$ and in $\G$, this means that
\begin{equation*}
\la \tE_1,\E_2\ra \subn\G.
\end{equation*}
Applying first Lemma~\ref{L:Conjugate} and then Lemma~\ref{L:FusionSystemsFirstPart}(b), we can conclude now that
\[\E=\la\E_1,\E_1^x,\E_2\ra=\la\tE_1,\E_2\ra\subn\G.\]
This contradicts the assumption that $(\F,\G,\E_1,\E_2,\L,\Delta,S)$ is a counterexample and completes thereby the proof.
\end{proof}

\begin{proof}[Proof of Theorem~\ref{T:FusionSystemsn}]
Note that Lemma~\ref{L:FusionSystemsFirstPart}(a) verifies part (a) and that Lemma~\ref{L:EsubnG} verifies part (b), if $\E$ is defined as above (a priori in dependence of $(\L,\Delta,S)$). In particular, $\E$ is the smallest saturated subsystem of $\F$ in which $\E_1,\E_2,\dots,\E_n$ are subnormal, and $\E$ depends in fact only on $\E_1,\dots,\E_n$ and $\F$, but not on the choice of the regular locality $(\L,\Delta,S)$. Therefore,  part (d) follows from the definition of $\E$, and  (c) follows from Lemma~\ref{L:FusionSystemsFirstPart}(b). 
\end{proof}

We end this subsection with an example which helps to motivate why we formulate Theorem~\ref{T:FusionSystems} and Theorem~\ref{T:FusionSystemsn} as we do. More precisely, in our example we illustrate that, for two subnormal subsystems $\E_1$ and $\E_2$ of a saturated fusion system $\F$, the subsystem generated by $\E_1$ and $\E_2$ may not be saturated and thus in particular not subnormal. Moreover, our example shows that there is not necessarily a smallest saturated or a smallest subnormal subsystem of $\F$ \emph{containing} $\E_1$ and $\E_2$. Thus, it is important to characterize $\la \E_1,\E_2\ra$ as the smallest saturated subsystem in which $\E_1$ and $\E_2$ are \emph{subnormal}.

\begin{ex}\label{E:1}
Let $G=G_1\times G_2$ with $G_1\cong G_2\cong A_4$. Setting $T_i:=O_p(G_i)$ for $i=1,2$ and $S:=T_1\times T_2$, we have $S\in\Syl_p(G)$. Set $\F=\F_S(G)$ and $\E_i=\F_{T_i}(G_i)$ for $i=1,2$. Note that $G_i\unlhd G$ and thus $\E_i\unlhd\F$ for $i=1,2$. In particular, $\E_1$ and $\E_2$ are subnormal in $\F$.

\smallskip

If $\E_1$ is contained in a saturated fusion system $\mD$ over $S$, then, by the extension axiom, every element of $\Aut_{\E_1}(S_1)$ extends to a $\mD$-automorphism of $S$, which yields $\Aut_{\mD}(S)\neq \Aut_S(S)$. In particular, the following holds:
\begin{equation}\label{E:AutDS}
\mbox{If $\mD$ is a saturated subsystem of $\F$ containing $\E_1$ and $\E_2$, then $\Aut_{\mD}(S)\neq\Aut_S(S)$.}
\end{equation}
Note that $\Aut_{\<\E_1,\E_2\>}(S)=\Aut_S(S)$. Hence, $\<\E_1,\E_2\>$ is by \eqref{E:AutDS} not saturated (and is so in particular not subnormal in $\F$).

\smallskip

We argue now that there is no smallest saturated subsystem of $\F$ containing $\E_1$ and $\E_2$, and indeed also no smallest subnormal subsystem of $\F$ containing $\E_1$ and $\E_2$.
Fix $d_i\in G_i$ of order $3$ for $i=1,2$. Set $N_1:=S\<d_1d_2\>$, $N_2:=S\<d_1d_2^2\>$ and $\mD_i:=\F_S(N_i)$ for $i=1,2$. Notice that $N_i\unlhd G$ and thus $\mD_i\unlhd\F$ for $i=1,2$. In particular, $\mD_1$ and $\mD_2$ are subnormal in $\F$. Observe also that $\E_1$ and $\E_2$ are contained in $\mD_i$ for $i=1,2$. However, if $\mD$ is a saturated fusion system containing $\E_1$ and $\E_2$, then $\mD$ is not contained in $\mD_1\cap\mD_2$, as otherwise $\Aut_{\mD}(S)=\Aut_{\mD_1\cap\mD_2}(S)=\Aut_S(S)$, contradicting \eqref{E:AutDS}. Thus, there is no smallest saturated and no smallest subnormal subsystem of $\F$ containing $\E_1$ and $\E_2$.

\smallskip

It might also be worth observing that in this example, $\la\E_1,\E_2\ra=\F$. (One can see this by noting that $(G,\delta(\F),S)$ is a regular locality, and so $\la \E_1,\E_2\ra$ is by Theorem~\ref{T:FusionSystemsn}(d) realized by $\<G_1,G_2\>=G$.) So $\la\E_1,\E_2\ra$ is neither contained in $\mD_1$ nor in $\mD_2$, even though both subsystems contain $\E_1$ and $\E_2$.
\end{ex}

\subsection{The proof of Proposition~\ref{P:GroupFusionBracket}}

Throughout this subsection let $\F$ be a saturated fusion system over $S$. If $\E_1,\E_2,\dots,\E_n$ are subnormal subsystems of $\F$, then $\la\E_1,\E_2,\dots,\E_n\ra$ denotes the subsystem of $\F$ which is characterized by Theorem~\ref{T:FusionSystemsn}(b) as the smallest saturated subsystem of $\F$ in which $\E_1,\E_2,\dots,\E_n$ are subnormal.

\begin{lemma}\label{L:n=2Help1}
Let $\E_1$ and $\E_2$ be subnormal subsystems of $\F$ over subgroups $S_1$ and $S_2$ of $S$ respectively. Set $T:=\<S_1,S_2\>$. Then the following hold:
\begin{itemize}
 \item [(a)] If $T\leq N_S(\E_1)\cap N_S(\E_2)$, then $\la \E_1,\E_2\ra =\<(\E_1T)_\F,(\E_2T)_\F\>$.
 \item [(b)] For every $x\in T$, we have $\la \E_1,\E_2\ra=\la \E_1,\E_1^x,\E_2\ra$.
\end{itemize}
\end{lemma}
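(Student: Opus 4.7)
The plan is to reduce both parts directly to results already established in the setting of a regular locality. I will fix a regular locality $(\L,\Delta,S)$ over $\F$ (which exists by \cite[Lemma~10.4]{Henke:Regular}), and use \cite[Theorem~E(a)]{Chermak/Henke} to realize $\E_1$ and $\E_2$ as $\F_{S_i}(\H_i)$ for the unique partial subnormal subgroups $\H_i$ of $\L$ with $\H_i\cap S=S_i$. Setting $\H:=\<\H_1,\H_2\>$, Theorem~\ref{T:FusionSystemsn}(d) then gives $\H\cap S=T$ and $\la\E_1,\E_2\ra=\F_T(\H)$. Under this identification, each part of the lemma falls out of a statement already proved for this concrete model.

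For part (a), the hypothesis $T\leq N_S(\E_1)\cap N_S(\E_2)$ is precisely the hypothesis required by Lemma~\ref{L:n=2Help}, which yields $\F_T(\H)=\<(\E_1T)_\F,(\E_2T)_\F\>$. Combined with the identification $\la\E_1,\E_2\ra=\F_T(\H)$ recalled above, this is the desired equality.

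For part (b), fix $x\in T$. Since $x\in T\subseteq\H$ and $\H_1\subseteq\H$, the partial subgroup $\H$ already contains $\H_1^x$, so $\H=\<\H_1,\H_1^x,\H_2\>$. By \cite[Lemma~3.26(a)]{Henke:NormalizerSubnormal}, $\H_1^x$ is a partial subnormal subgroup of $\L$ realizing $\E_1^x$, so a second application of Theorem~\ref{T:FusionSystemsn}(d), now to the triple $(\E_1,\E_1^x,\E_2)$, identifies $\la\E_1,\E_1^x,\E_2\ra$ with $\F_{T'}(\H)$, where $T'=\<S_1,S_1^x,S_2\>$. Since $x\in T$ and $S_1\leq T$ force $S_1^x\leq T$, we have $T'=T$, and the two subsystems coincide. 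This is exactly the $n=2$ case of Lemma~\ref{L:Conjugate}.

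There is thus no substantive obstacle at this stage: the real content of both assertions is carried by Lemma~\ref{L:n=2Help} and Lemma~\ref{L:Conjugate}, and the only task remaining is the bookkeeping of using Theorem~\ref{T:FusionSystemsn}(d) to recognize the abstractly defined subsystems $\la\E_1,\E_2\ra$ and $\la\E_1,\E_1^x,\E_2\ra$ as the fusion systems of the corresponding joins of partial subnormal subgroups in a chosen regular locality.
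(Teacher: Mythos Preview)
Your proof is correct and follows essentially the same approach as the paper: the paper's proof simply states that, via Theorem~\ref{T:FusionSystemsn}(d), part (a) is a restatement of Lemma~\ref{L:n=2Help} and part (b) follows from Lemma~\ref{L:Conjugate}. You have spelled out the translation through the regular locality in more detail (and for (b) essentially re-derived the $n=2$ case of Lemma~\ref{L:Conjugate} rather than just citing it), but the substance is identical.
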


\begin{proof}
Using Theorem~\ref{T:FusionSystemsn}(d), part (a) is a restatement of Lemma~\ref{L:n=2Help} and part (b) follows from Lemma~\ref{L:Conjugate}.
\end{proof}

We are now able to prove Proposition~\ref{P:GroupFusionBracket} using a similar strategy as in the proof of Lemma~\ref{L:EsubnG}.

\renewcommand{\tH}{\tilde{H}}

\begin{proof}[Proof of Proposition~\ref{P:GroupFusionBracket}]
Set $S_i=S\cap H_i$ and $\E_i:=\F_{S_i}(H_i)$ for $i=1,2$. Set moreover $T:=\<S_1,S_2\>$ and $H:=\<H_1,H_2\>$. Note that $\E_1$ and $\E_2$ are subnormal in $\F$ as stated in Lemma~\ref{L:Hyperfocal}(b). Thus, the statement of the proposition makes sense. Assume now that $(G,S,H_1,H_2)$ is a counterexample to the proposition such that first $|S:T|$ and then $|H_1|+|H_2|$ is maximal.

\smallskip

To start with, assume that $T\leq N_S(H_1)\cap N_S(H_2)$. Then Lemma~\ref{L:Hyperfocal}(b) gives that $T\leq N_S(\E_1)\cap N_S(\E_2)$ and $(\E_iT)_\F=\F_T(H_iT)$ for $i=1,2$.  Thus, Lemma~\ref{L:n=2Help1}(a) yields that
\[\la \E_1,\E_2\ra=\<(\E_1T)_\F,(\E_2T)_\F\>=\<\F_T(H_1T),\F_T(H_2T)\>.\]
Hence, it follows from Theorem~\ref{T:Groups} that $\la\E_1,\E_2\ra=\F_{H\cap S}(H)$, contradicting the assumption that $(G,S,H_1,H_2)$ is a counterexample. Hence, $T\not\leq N_S(H_1)\cap N_S(H_2)$. Since the situation is symmetric in $H_1$ and $H_2$, we may assume that
\[T\not\leq N_S(H_1).\]
This means that $T_0:=N_T(H_1)<T$ and thus $T_0<N_T(T_0)$ as $T$ is a $p$-group. Fix $x\in N_T(T_0)\backslash T_0$. Note that $S_1\leq T_0$ and so $H_1^x\cap S=S_1^x\leq T_0\leq S$. Hence, $\tS_1:=\<S_1,S_1^x\>\leq T_0<T$ and $|S:\tS_1|>|S:T|$. Observe also that $\F_{H_1^x\cap S}(H_1^x)=\E_1^x$. Setting $\tH_1:=\<H_1,H_1^x\>$, the maximality of $|S:T|$ yields thus that
\[\F_{\tH_1\cap S}(\tH_1)=\la \E_1,\E_1^x\ra.\]
Recall that $x$ is chosen such that $H_1\neq H_1^x$ and $x\in T\leq H=\<H_1,H_2\>$. Hence, $H_1<\tH_1$ and $H=\<\tH_1,H_2\>$. By Wielandt's Join Theorem, $\tH_1$ is subnormal in $G$. Therefore, the maximality of $|H_1|+|H_2|$ yields that
\[\F_{H\cap S}(H)=\la \;\F_{\tH_1\cap S}(\tH_1),\F_{H_2\cap S}(H_2)\;\ra=\la\;\la \E_1,\E_1^x\ra,\E_2\;\ra.\]
Using first Theorem~\ref{T:FusionSystemsn}(c) and then Lemma~\ref{L:n=2Help1}(b), we obtain now
\[\F_{H\cap S}(H)=\la\E_1,\E_1^x,\E_2\ra=\la\E_1,\E_2\ra,\] contradicting the assumption that $(G,S,H_1,H_2)$ is a counterexample.
\end{proof}

\bibliographystyle{amsplain}
\bibliography{repcoh}

\end{document}